\newdimen\bibspace
\renewenvironment{thebibliography}[1]{%
 \section*{\refname 
       \@mkboth{\MakeUppercase\refname}{\MakeUppercase\refname}}%
     \list{\@biblabel{\@arabic\c@enumiv}}%
          {\settowidth\labelwidth{\@biblabel{#1}}%
           \leftmargin\labelwidth
           \advance\leftmargin\labelsep
           \itemsep\bibspace
           \parsep\z@skip     %
           \@openbib@code
           \usecounter{enumiv}%
           \let\p@enumiv\@empty
           \renewcommand\theenumiv{\@arabic\c@enumiv}}%
     \sloppy\clubpenalty4000\widowpenalty4000%
     \sfcode`\.\@m}
    {\def\@noitemerr
      {\@latex@warning{Empty `thebibliography' environment}}%
     \endlist}
\newtheorem{thm}{Theorem}[section]
\newtheorem{lem}{Lemma}[section]
\newtheorem{prop}{Proposition}[section]
\newtheorem{defn}{Definition}[section]
\newtheorem{cor}{Corollary}[section]
\newtheorem{rem}{Remark}[section]
\def\Xint#1{\mathchoice
  {\XXint\displaystyle\textstyle{#1}}%
  {\XXint\textstyle\scriptstyle{#1}}%
  {\XXint\scriptstyle\scriptscriptstyle{#1}}%
  {\XXint\scriptscriptstyle\scriptscriptstyle{#1}}%
  \!\int}
\def\XXint#1#2#3{{\setbox0=\hbox{$#1{#2#3}{\int}$}
  \vcenter{\hbox{$#2#3$}}\kern-.5\wd0}}
\def\dashint{\Xint-}
\newcommand{\al}{\alpha}                \newcommand{\lda}{\lambda}
\newcommand{\om}{\Omega}                \newcommand{\pa}{\partial}
\newcommand{\va}{\varepsilon}           \newcommand{\ud}{\mathrm{d}}
\newcommand{\be}{\begin{equation}}      \newcommand{\ee}{\end{equation}}
\newcommand{\Lda}{\Lambda}              \newcommand{\B}{\mathcal{B}}
\newcommand{\R}{\mathbb{R}}              \newcommand{\Sn}{\mathbb{S}^n}
\newcommand{\dlim}{\displaystyle\lim}
\begin{document}

\title{\textbf{Compactness of conformal metrics with constant $Q$-curvature. I}
\bigskip}

\author{\medskip YanYan Li\footnote{Supported in part by NSF grants  DMS-1203961 and DMS-1501004.} \  \  and \ \
Jingang Xiong\footnote{Supported in part by NSFC 11501034, a key project of NSFC 11631002 and NSFC 11571019.}}

\date{}

\fancyhead{}
\fancyhead[CO]{Compactness of conformal metrics with constant $Q$-curvature}
\fancyhead[CE]{Y. Y. Li \& J. Xiong}

\fancyfoot{}

\fancyfoot[CO, CE]{\thepage}

\renewcommand{\headrule}{}

\maketitle

\begin{abstract}
We study compactness for nonnegative solutions of the fourth order constant $Q$-curvature equations on smooth compact  Riemannian manifolds of dimension $\ge 5$. If the  $Q$-curvature equals $-1$, we prove that all solutions are
universally bounded. If the  $Q$-curvature is $1$, assuming that Paneitz operator's kernel is trivial and its Green function is positive, we establish universal energy bounds on manifolds which are either locally conformally flat (LCF) or of dimension $\le 9$. Moreover, assuming in addition that a positive mass type theorem holds for the Paneitz operator, we prove compactness in $C^4$. Positive mass type theorems have been verified recently on LCF manifolds or manifolds of dimension $\le 7$, when the Yamabe invariant is positive. We also prove that, for dimension $\ge 8$, the Weyl tensor has to vanish at possible blow up points of a sequence of blowing up solutions. This implies the compactness result in dimension $\ge 8$ when the Weyl tensor does not vanish anywhere.
To overcome difficulties stemming from fourth order elliptic equations, we develop a blow up analysis procedure via integral equations.
\end{abstract}


\section{Introduction}

Let $(M,g)$ be a compact smooth Riemannian manifold of dimension $n\ge 3$, and let
\begin{align}
\label{Paneitz operator}
P_g&:=  \Delta_g^2 -\mathrm{div}_g(a_n R_g g+b_nRic_g)d+\frac{n-4}{2}Q_g, \\
\label{Q-curvature}
Q_g&:=-\frac{1}{2(n-1)} \Delta_g R_g+\frac{n^3-4n^2+16n-16}{8(n-1)^2(n-2)^2} R_g^2-\frac{2}{(n-2)^2} |Ric_g|^2,
\end{align} be the  Paneitz operator \cite{Pan83} and the $Q$-curvature \cite{Bra85}, respectively.  Here $\Delta_g $ denotes the Laplace-Beltrami operator,  $R_g$ and $Ric_g$ denote the scalar curvature and Ricci curvature respectively,  $
a_n=\frac{(n-2)^2+4}{2(n-1)(n-2)} $ and $b_n=-\frac{4}{n-2}.$

In dimension four, the prescribed $Q$-curvature problem has been studied by Chang-Yang \cite{CY95,CY99},  Djadli-Malchiodi \cite{DM}, Li-Li-Liu \cite{LLL} and the references therein. Bubbling analysis and compactness of solutions have been studied by Druet-Robert \cite{DR}, Malchiodi \cite{Mal}, and Weinstein-Zhang \cite{WZhang} among others.

Since the Paneitz operator satisfies, for $n\neq 4$, the transformation law
\be \label{eq:conformal change1}
P_{ u^{\frac{4}{n-4}}g}(\phi)=u^{-\frac{n+4}{n-4}}P_g(u\phi)\quad \forall~ u, \phi\in C^\infty(M), ~u>0
\ee
and $ P_g (1)=\frac{n-4}{2} Q_g$, the constant $Q$-curvature problem is equivalent to solving
\be \label{eq:4th Yamabe}
P_g u=\lda u^{\frac{n+4}{n-4}}, \quad u>0 \quad \mbox{on }M,
\ee
where $\lda\in \{-1,0,1\}$.
The existence of solutions of \eqref{eq:4th Yamabe} with $\lda=1$ was obtained by Qing-Raske \cite{QR0} if $(M,g)$ is locally conformally flat and of positive Yamabe type and its  Poincar\'e exponent is less than $(n-4)/2$,  by Gursky-Malchiodi \cite{GM} if $(M,g)$ has a conformal metric $\tilde g$ satisfying   $R_{\tilde g} \ge 0$, $Q_{\tilde g} \ge 0$ and $Q_{\tilde g} \not\equiv 0 $ simultaneously,  and by Hang-Yang \cite{HY14b} if $(M,g)$ is of positive Yamabe type and has a conformal metric $\tilde g$ satisfying  $Q_{\tilde g} \ge 0$ and $Q_{\tilde g} \not\equiv 0 $. See Gursky-Hang-Lin \cite{GHL} for further results.  Here we say that a compact smooth Riemannian manifold is of positive Yamabe type if the first eigenvalue of the conformal Laplacian $L_g:=-\Delta_g +\frac{(n-2)}{4
(n-1)}R_g$ on $M$,  denoted by $\lda_1(L_g)$, is positive.
The assumptions in every existence result mentioned above imply that
\be
\label{condition:main2}
\mathrm{Ker} P_g=\{0\} \mbox{ and the Green's function }G_g \mbox{ of $P_g$ is positive}.
\ee

In this paper, we study the compactness of solutions of the constant $Q$-curvature equation for $n\ge 5$.
Non-compact examples on manifolds of dimension $n\ge 25$ which are not conformally diffeomorphic  to the standard sphere has been constructed by Wei-Zhao \cite{WZ}.  Multiplicity results have been proved by Bettiol-Piccione-Sire \cite{BPS}.   Compactness of solutions of  the Yamabe equation was studied by Schoen \cite{Schoen91}, Li-Zhu \cite{Li-Zhu99}, Druet \cite{Druet04}, Li-Zhang \cite{Li-Zhang04,Li-Zhang05,Li-Zhang06}, Marques \cite{Marques} and Khuri-Marques-Schoen \cite{KMS} for the positive side if $ n\le 24$ or $M$ is locally conformal flat, and by Brendle \cite{Brendle}, Brendle-Marques \cite{BM} for counterexamples in dimensions $ n \ge 25$. See also Kim-Musso-Wei \cite{KMW} for the counterexamples in the fractional Yamabe setting.

\begin{thm} \label{thm:main theorem} Let $(M,g)$ be a  smooth $n$-dimensional compact Riemannian manifold which is not conformally diffeomorphic   to the standard sphere.
 Suppose that (\ref{condition:main2}) holds and at least one of the following three cases occurs.
\begin{itemize}
\item[(i)] $\lda_1(L_{g})>0$ and $(M,g)$ is locally conformally flat or $n=5,6,7$,
\item[(ii)]  $5\le n\le 9$  and the positive mass type theorem holds for the Paneitz operator,
\item[(iii)] $n\ge 8$ and the Weyl tensor of $g$ does not vanish anywhere on $M$.
\end{itemize}
 For $1< p\le \frac{n+4}{n-4}$, let $u\in C^4(M)$ be a positive solution of
\be \label{eq:main1}
P_g u=c(n)u^{p} \quad \mbox{on }M,
\ee where $c(n)=n(n+2)(n-2)(n-4)$.
Then there exists a constant $C>0$, depending only on $M,g$, and
a positive  lower bound of $p-1$, such that
\be \label{eq:C4 estimate}
\|u\|_{C^4(M)} +\|1/u\|_{C^4(M)}\le C.
\ee
\end{thm}


The positive mass type theorem for Paneitz operator in dimension $8,9$ is understood as in Remark \ref{rem:positive mass}. If $(M,g)$ is locally conformally flat and $p=\frac{n+4}{n-4}$, \eqref{eq:C4 estimate} was established
by Qing-Raske \cite{QR0} under the additional assumptions that $\lda_1(L_g)>0$ and the Poincar\'e exponent
is less than $(n-4)/2$, and also by Hebey-Robert \cite{HR, HR11} with $C$ depending on the $H^2$ norm of $u$,
under the additional assumption  that $P_g$ is coercive. In dimension $n=5$, under the assumption that $R_g>0$ and $Q_g\ge 0$ but not identically equal to zero, \eqref{eq:C4 estimate} was proved independently by Li \cite{Lg}.

Our next theorem gives an energy bound of solutions without assuming either $\lda_1(L_g)>0$ or the positive mass type theorem for Paneitz operator.

\begin{thm}\label{thm:energy} Let $(M,g)$ be a smooth compact Riemannian manifold of dimension $n\ge 5$. Suppose that  \eqref{condition:main2} holds.  Let $u\in C^4(M)$ be a positive solution of \eqref{eq:main1}.
If either $n\le 9$ or $(M,g)$ is
locally conformally flat, then, for some positive constant  $C$ depending  only on $M,g$ and
a positive lower bound of $p-1$,
\[
\|u\|_{H^2(M)}\le C.
\]
\end{thm}

For the negative constant $Q$-curvature equation (and more general ones), we have

\begin{thm}\label{thm:compact1} Let $(M,g)$ be a smooth compact Riemannian manifold of dimension $n\ge 5$. Then for any  $1<p<\infty$, there exists a positive constant $C$, depending only on $M,g$ and $p$, such that every  nonnegative $C^4$ solution of
\be \label{eq:-Q}
P_g(u)=-u^{p} \quad \mbox{on }M
\ee
satisfies
\[
\|u\|_{C^4(M)} \le C.
\]
\end{thm}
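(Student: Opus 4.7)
My plan is a blow-up argument plus elliptic bootstrap. First, I establish a uniform $L^\infty$ bound by contradiction: a hypothetical blowing-up sequence is rescaled to converge to an entire solution on $\mathbb R^n$ of $\Delta^2 v_\infty = -v_\infty^p$ with $v_\infty \geq 0$ and $v_\infty(0) = 1$, which is then ruled out by a Liouville theorem. With the $L^\infty$ bound in hand, standard elliptic regularity for $P_g$ on the compact manifold upgrades to $C^4$.

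For the blow-up, suppose $\{u_i\}$ is a sequence of nonnegative $C^4$ solutions with $M_i := \max_M u_i \to \infty$, attained at $X_i \to \bar X \in M$. In geodesic normal coordinates around $X_i$, set
\[
v_i(y) := M_i^{-1}\, u_i\!\bigl(\exp_{X_i}(M_i^{-(p-1)/4}\, y)\bigr),
\]
so $v_i(0) = 1$ and $0 \leq v_i \leq 1$. A direct scaling computation (the bilaplacian carries a factor $M_i^{p}$, the middle piece $\mathrm{div}_g(B_g\,d\cdot)$ carries $M_i^{(p+1)/2}$, and the zeroth-order piece $\tfrac{n-4}{2}Q_g$ carries $M_i$) divides $P_g u_i = -u_i^p$ by $M_i^p$ to give
\[
\Delta_{g_i}^2 v_i \;-\; M_i^{-(p-1)/2}\mathrm{div}_{g_i}(B_{g_i}\, dv_i) \;+\; M_i^{1-p}\tfrac{n-4}{2}Q_{g_i}\, v_i \;=\; -v_i^{\,p},
\]
where $g_i\to \delta_{\mathrm{Eucl}}$ in $C^k_{\mathrm{loc}}(\mathbb R^n)$ for every $k$ and both rescaled lower-order prefactors tend to $0$ because $p>1$. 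Since $\|v_i\|_{L^\infty}\le 1$, interior $W^{4,q}$ and Schauder estimates for the (asymptotically constant coefficient) uniformly elliptic fourth-order operator give uniform $C^{4,\alpha}_{\mathrm{loc}}(\mathbb R^n)$ bounds on $v_i$. A diagonal subsequence converges in $C^4_{\mathrm{loc}}(\mathbb R^n)$ to a limit $v_\infty \in C^4(\mathbb R^n)$ with $0 \leq v_\infty \leq 1$, $v_\infty(0)=1$, and $\Delta^2 v_\infty = -v_\infty^{\,p}$ on $\mathbb R^n$.

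The contradiction will come from the Liouville statement: the only bounded nonnegative $C^4(\mathbb R^n)$ solution of $\Delta^2 v = -v^p$ with $p>1$ is $v\equiv 0$. For $1 < p \leq n/(n-4)$ this is the classical Mitidieri--Pohoz\v{a}ev test-function argument: taking $\phi_R = \tilde\phi(x/R)^{2p/(p-1)}$ with $\tilde\phi$ a standard cutoff, two integrations by parts and H\"older give $\int \phi_R v^p \leq C R^{n - 4p/(p-1)}\to 0$. The supercritical range $p > n/(n-4)$ exploits the additional $L^\infty$-boundedness of $v_\infty$ coming from the normalization $v_i\leq 1$: a refinement of the cut-off estimate combined with $v_\infty\in L^\infty$ yields $v_\infty \in L^q(\mathbb R^n)$ for sufficiently large $q$, and then a Pohoz\v{a}ev-type energy identity (the equation tested against $v_\infty$ against compactly supported weights with the boundary terms vanishing by the decay just established) gives $\int (\Delta v_\infty)^2 + \int v_\infty^{p+1} = 0$, forcing $v_\infty \equiv 0$ and contradicting $v_\infty(0)=1$.

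With $\|u\|_{L^\infty(M)}\leq C$ in hand, $\|P_g u\|_{L^\infty} = \|u^p\|_{L^\infty} \leq C$, and the global elliptic estimate $\|u\|_{W^{4,q}(M)} \leq C_q\bigl(\|P_g u\|_{L^q}+\|u\|_{L^q}\bigr)$ on the compact manifold (valid for any elliptic $P_g$ with smooth coefficients, irrespective of $\ker P_g$) gives $u\in W^{4,q}$ uniformly for every $q<\infty$; Morrey embedding and a final Schauder estimate for $P_g u = -u^p\in C^{0,\alpha}$ upgrade to $\|u\|_{C^{4,\alpha}(M)} \leq C$, and in particular the desired $C^4$ bound. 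The main obstacle is the Liouville theorem through the entire range $p>1$: Mitidieri--Pohoz\v{a}ev handles $1 < p \leq n/(n-4)$ cleanly, but the supercritical case must exploit the boundedness of $v_\infty$ and leads to a more delicate integral argument at infinity; this is what makes the negative-$Q$ case structurally softer than (yet not entirely trivial compared to) the positive-$Q$ case, and is the main technical content of the theorem.
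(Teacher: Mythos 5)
Your overall architecture (rescale at a maximum point, pass to an entire solution of $\Delta^2 v_\infty=-v_\infty^{\,p}$, invoke a Liouville theorem, then bootstrap) is coherent, and the scaling computation, the interior ADN/Schauder estimates, and the final global regularity step are all fine. The genuine gap is the Liouville theorem in the supercritical range $p>n/(n-4)$, which covers most of the range $1<p<\infty$ in the statement and is exactly where your sketch is vaguest. The cutoff estimate combined with $\|v_\infty\|_{L^\infty}\le 1$ only gives $\int_{B_R}v_\infty^{\,p}\le CR^{\,n-4}$, which does not yield $v_\infty\in L^q(\mathbb{R}^n)$ for any $q$; and the identity $\int(\Delta v_\infty)^2+\int v_\infty^{\,p+1}=0$ requires integrating by parts against $v_\infty$ itself with boundary terms for which you have established no decay. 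As written, the supercritical case is not proved. The good news is that the statement you need is true for every $p>1$, even without boundedness, by an elementary spherical-means argument: with $\bar v(r)$ the average of $v_\infty$ over $\partial B_r(0)$ (recall $v_\infty(0)=1>0$) and $h:=\partial_{rr}\bar v+\frac{n-1}{r}\partial_r\bar v$, one has $(r^{n-1}h')'=r^{n-1}\,\overline{\Delta^2 v_\infty}\le -r^{n-1}\bar v^{\,p}$ by Jensen, so $h$ is nonincreasing; if its limit is negative then $\bar v\to-\infty$, contradicting $v_\infty\ge 0$, while if it is nonnegative then $h\ge 0$ everywhere, $\bar v$ is nondecreasing, hence $\bar v\ge \bar v(0)>0$, which forces $h\to-\infty$, again a contradiction. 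Replacing your cutoff/energy sketch by this (or by a citation to a nonexistence result for $\Delta^2 v+v^p=0$, $v\ge 0$, in $\mathbb{R}^n$) closes the proof.

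It is worth knowing that the paper's own proof is entirely different and much softer: no blow-up and no Liouville theorem. Step 1 integrates the equation against $1$ and uses self-adjointness, $\int_M u^p=-\int_M u\,P_g(1)=-\frac{n-4}{2}\int_M u\,Q_g\le \frac{n-4}{2}\|u\|_{L^p}\|Q_g\|_{L^{p'}}$, giving a universal $L^p$ bound. Step 2 writes $P_gu-\varepsilon u=-(u^p+\varepsilon u)$ with a small $\varepsilon\ge 0$ chosen so that $P_g-\varepsilon$ is invertible, represents $u$ by the Green's function of $P_g-\varepsilon$, and uses only that this Green's function is positive near the diagonal and bounded away from it: the near-diagonal contribution has a favorable sign and is simply discarded, so $u(X)\le C\max\{\|u\|_{L^p(M)}^p,\|u\|_{L^p(M)}\}$, after which ADN regularity gives the $C^4$ bound. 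The sign of the nonlinearity is thus exploited directly on the manifold; your approach pushes that sign information into an entire-space nonexistence theorem, which works but only once the supercritical Liouville step above is actually supplied.
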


The proofs of Theorem \ref{thm:main theorem} and Theorem \ref{thm:energy} make use of important ideas for the proof of compactness of positive solutions  of the Yamabe equation, which were outlined first by Schoen \cite{Schoen89, Schoen89b, Schoen91}, as well as methods developed through the work Li \cite{Li95}, Li-Zhu \cite{Li-Zhu99}, Li-Zhang \cite{Li-Zhang04,Li-Zhang05,Li-Zhang06},
and Marques \cite{Marques}.
Our main difficulty now stems from fourth order equations, for which the sole positivity of solutions is not enough to adapt the above analysis for the second order critical nonlinear elliptic  equations.  For instance, if equation \eqref{eq:main1} is defined only in an open  subset $\om\subset M$ instead of $M$, we do not know yet how to prove the universal $H^2$ estimates in Theorem \ref{thm:energy} on $\om' \subset\subset \om$.  This is very different from the Yamabe equation case.
 We establish a comparison principle  and a blow up analysis procedure for nonlinear integral equations on Riemannian manifolds. This is inspired by our earlier joint work with Jin \cite{JLX, JLX2, JLX3} on the fractional Nirenberg problem.

An earlier version of the present paper, which contains some further results,  can be found in \cite{LX}.

The paper is organized as follows. In section \ref{section:pre}, we set up notations and recall basic results of conformal normal coordinates and Phozaev identities. Theorem \ref{thm:compact1} is proved in section \ref{section:negative}.  In section \ref{s:blowup}, we extend the blow up analysis in \cite{JLX3} to the current setting with lower order terms. Section \ref{section:bubble-expansion} is devoted to a refined blow up analysis. In section \ref{section:Q equation blow}, we prove a sign restriction on Pohozaev integral and vanishing rate of Weyl tensor at isolated simple blow up points.The first two main theorems are proved in section \ref{section:thm1.1}.

\bigskip

\noindent\textbf{Acknowledgments:}
This work was completed while Xiong was a Simons postdoc in Beijing International Center for Mathematical Research. He is deeply indebted to Professor Gang Tian for providing  him the opportunity of working in the excellent and stimulating research environment in BICMR.
 He is also grateful to Professor Jiguang Bao for his support.

\medskip

\section{Preliminaries}

\label{section:pre}

Letters $x,y,z$ denote points in $\R^n$, and capital letters $X,Y,Z$ denote points on Riemannian manifolds.  Denote by $B_r(x)\subset \R^n$ the ball centered at $x$ with radius $r>0$. We often write $B_r$ in replace of $B_r(0)$ for brevity. For $X\in M$, $\B_{\delta}(X)$ denotes the geodesic ball centered at $X$ with radius $\delta$. Throughout the paper, constants $C>0$ in inequalities  may vary from line to line and are universal, which means they depend on given quantities but not on solutions.  $f=O^{(k)}(r^m)$ denotes quantities satisfying $|\nabla^j f(r)|\le C r^{m-j}$ for all  integers $1\le j\le k$, where $k$ is a positive integer and $m$ is a real number. $|\mathbb{S}^{n-1}|$ denotes the area of the standard $(n-1)$-sphere. Here are three constants frequently  used in the paper: $c(n)=n(n+2)(n-2)(n-4)$,
 $\al_n=\frac{1}{2(n-2)(n-4)|\mathbb{S}^{n-1}|}$,
$c_n=c(n)\cdot \al_n =\frac{n(n+2)}{2|\mathbb{S}^{n-1}|}$.

\subsection{Paneitz operator in conformal normal coordinates}

Let $(M,g)$ be a smooth Riemannian manifold of dimension $n\ge 5$, and $P_g$ be the Paneitz operator on $M$.
For any point $\bar X\in M$, it was proved in \cite{LP}, together with some improvement in \cite{Cao} and \cite{Guther}, that there exists a  positive smooth function $\kappa $ (with control) on $M$ such that  the conformal metric $\tilde g=\kappa^{\frac{-4}{n-4}}g$ satisfies, in $\tilde g$-normal coordinates $\{x_1,\dots,x_n\}$ centered at $\bar X$,
\[
\det \tilde g=1 \quad \mbox{in }B_{\delta}
\]  for some $\delta>0$, where
$\kappa(x)=1+O(|x|^2)$ and in particular
\[
\kappa(0)=1, \quad \nabla \kappa(0)=0.
\]
 We refer such coordinates as conformal normal coordinates.   Notice that $\det \tilde g=1+O(|x|^N)$ will be enough for our use if  $N$ is sufficiently large.

    In the $\tilde g$-normal coordinates,
 \begin{align*}
 &R_{ij}(0)=0, \quad  Sym_{ijk} R_{ij,k}(0)=:\frac{1}{6} \sum_{\sigma} R_{\sigma(i),\sigma(j),\sigma(k)} (0)=0, \\&
 R_{,i}(0)=0,\quad \Delta_{ \tilde g} R(0)=-\frac{1}{3!}|W_{\tilde g}(0)|^2,
\end{align*}
 where the Ricci tensor $R_{ij}$, scalar curvature $R$, Weyl tensor $W$ are with respect to $\tilde g$, and $\sigma$ ranges over all the permutations of the set $\{i,j,k\}$.
In addition,  \[
\Delta_{\tilde g} =\Delta+\pa_l \tilde g^{kl}\pa_k+(\tilde g^{kl}-\delta^{kl})\pa_{kl},
\]
\begin{align*}
E(u):&=P_{\tilde g}u-\Delta^2 u
=\frac{n-4}{2}Q_{\tilde g}u+ f^{(1)}_{k}\pa_ku +f^{(2)}_{kl}\pa_{kl}u+f^{(3)}_{kls}\pa_{kls}u+f^{(4)}_{klst}\pa_{klst}u,
\end{align*}
where
\[
| f^{(1)}_{k}(x)|+ | f^{(2)}_{kl}(x)|+ ||x|^{-1}  f^{(3)}_{kls}(x)| +| |x|^{-2} f^{(4)}_{klst}(x)| \le C\sum_{k\ge 1, 2\le k+1\le 4} \|\nabla^k g\|_{L^\infty(B_\delta)}^l.
\]

\begin{lem}\label{lem:GM2.8} In the $\tilde g$-normal coordinates, we have, for any smooth radial function $u$,
\begin{align*}
P_{\tilde g}u=&\Delta^2 u+\frac{1}{2(n-1)}R_{,kl}(0)x^kx^l (c_1^*\frac{u'}{r} +c_2^*u'')-\frac{4}{9(n-2)r^2}\sum_{kl}(W_{ikjl}(0)x^i x^j)^2(u''-\frac{u'}{r})\\&+\frac{n-4}{24(n-1)}|W_g(0)|^2 u+(\frac{\psi_5(x)}{r^2}+\psi_3(x))u'' -(\frac{\psi_5(x)}{r^3} +\frac{\psi_3'(x)}{r}) u'+\psi_1(x)u \\&+O(r^4)u''+O(r^3) u'+O(r^2) u,
\end{align*}
where $r=|x|$, $\psi_k(x),\psi_k'(x)$ are  homogeneous polynomials of degree $k$, and $
c_1^*=\frac{2(n-1)}{(n-2)}-\frac{(n-1)(n-2)}{2}+6-n,~ c_2^*=-\frac{n-2}{2}-\frac{2}{n-2}.$
\end{lem}
\begin{proof} Since $\det \tilde g=1$ and $u$ is radial, we have  $\Delta_{\tilde g}^2 u =\Delta^2 u$. The rest of the proof is the same as that of Lemma 2.8 of \cite{GM}.  It suffices to expand the coefficients of lower order terms  of $P_{\tilde g}$ in Taylor series to a higher order so that $(\frac{\psi_5(x)}{r^2}+\psi_3(x))u'' -(\frac{\psi_5(x)}{r^3} +\frac{\psi_3'(x)}{r}) u'+\psi_1(x)u$ appears.

\end{proof}

\begin{prop}[\cite{GM}, \cite{HY14b}]\label{prop:GM}  Let $\tilde g$ and $B_\delta$ be as before.  Let $G\in C^4(B_\delta\setminus \{0\})$ satisfy $P_{\tilde g} G=\delta_0$, where $\delta_0$ is the Dirac measure at $0$. Then
\begin{itemize}
\item If $n=5,6,7$, or $M$ is flat in a neighborhood of $\bar X$, $
G(x)=\frac{\al_n}{|x|^{n-4}}+A+O^{(4)}(|x|),$
\item If $n=8$, $
G(x)=\frac{\al_n}{|x|^{n-4}}-\frac{\al_n}{1440}|W(\bar X)|^2 \log |x|+O^{(4)}(1),$
\item If $n\ge 9$, $
G(x)=\frac{\al_n}{|x|^{n-4}}\Big(1+\psi_4(x)\Big)+O^{(4)}(|x|^{9-n}),$
\end{itemize}
where $\al_n=\frac{1}{2(n-2)(n-4)|\mathbb{S}^{n-1}|}$, $A$ is a constant, $W(\bar X)$ is the Weyl tensor at $\bar X$, and $\psi_4(x)$ a homogeneous polynomial of degree $4$.
\end{prop}

\begin{cor}\label{cor:Q-gf-expansion} If $P_{\tilde g}G(Z,\cdot)=\delta_{Z}$ in $B_\delta$ for every $Z\in B_{\delta}$.  Then
\[
G(\exp_{\bar X}x, \exp_{\bar X}y)=\frac{\al_n(1+O^{(4)}(|x|^2)+O^{(4)}(|y|^2))}{|x-y|^{n-4}}+\bar a+O^{(4)}(|x-y|^{6-n}),
\]
where $x,y\in B_\delta$, $x-y=(x_1-y_1, \dots, x_n-y_n)$, $|x-y|=\sqrt{\sum_{i=1}^n(x_i-y_i)^2}$, $\bar a$ is a constant and $\bar a=0$ if $n\ge 6$.
\end{cor}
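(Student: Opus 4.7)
The plan is to bootstrap Proposition \ref{prop:GM} from the special base point $\bar X$ to a nearby base point $P:=\exp_{\bar X}(x)$ via a conformal change. First, for each fixed $x\in B_\delta$, apply the Lee--Parker construction at $P$ (with smooth dependence on $x$) to produce a positive smooth function $\kappa_P$ satisfying $\kappa_P(P)=1$ and $\nabla\kappa_P(P)=0$, such that the conformal metric $\hat g:=\kappa_P^{-4/(n-4)}\tilde g$ has $\det\hat g\equiv 1$ in $\hat g$-normal coordinates centered at $P$. Since $\mathrm{Ker}\,P_{\tilde g}=\{0\}$ is conformally invariant, the Green's function $G_{\hat g}$ exists, and the conformal covariance \eqref{eq:conformal change1} of the Paneitz operator, together with $\kappa_P(P)=1$, yields
\[
G_{\tilde g}(P,Q) = \kappa_P(Q)^{-1}\,G_{\hat g}(P,Q).
\]
Proposition \ref{prop:GM} applied to $(M,\hat g)$ at the center $P$ then expands $G_{\hat g}(P,Q)$ in powers of $|z|:=dist_{\hat g}(P,Q)$, with leading term $\al_n|z|^{-(n-4)}$ plus the constant/log/polynomial remainder dictated by the value of $n$.

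It remains to translate this expansion back into the $\tilde g$-normal chart at $\bar X$. Two Taylor comparisons handle the change of coordinates. First, since $\tilde g_{ij}=\delta_{ij}+O(|\cdot|^2)$ in the original chart, bounding the length of the Euclidean segment from $x$ to $y$ in the metric $\tilde g$ (and estimating the true geodesic against it) gives
\[
dist_{\tilde g}(\exp_{\bar X} x,\exp_{\bar X} y) = |x-y|\bigl(1+O^{(4)}(|x|^2)+O^{(4)}(|y|^2)\bigr)
\]
uniformly for $x,y\in B_\delta$. Second, from $\kappa_P(P)=1$ and $\nabla\kappa_P(P)=0$ one obtains $\kappa_P(Q)=1+O^{(4)}(dist_{\tilde g}(P,Q)^2)$ uniformly in $x$, so $\hat g$ and $\tilde g$ differ by a factor $1+O(|x-y|^2)$ near $P$, and in turn
\[
dist_{\hat g}(P,Q) = dist_{\tilde g}(P,Q)\bigl(1+O^{(4)}(|x-y|^2)\bigr).
\]
Raising to the $-(n-4)$ power and absorbing $O(|x-y|^2)$ into $O(|x|^2)+O(|y|^2)$ produces
\[
|z|^{-(n-4)} = |x-y|^{-(n-4)}\bigl(1+O^{(4)}(|x|^2)+O^{(4)}(|y|^2)\bigr),
\]
and the prefactor $\kappa_P(Q)^{-1}=1+O^{(4)}(|x-y|^2)$ merges into the same bracket.

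Substituting into $G_{\tilde g}(P,Q)=\kappa_P(Q)^{-1}G_{\hat g}(P,Q)$, the singular part produces exactly the stated $\al_n(1+O^{(4)}(|x|^2)+O^{(4)}(|y|^2))/|x-y|^{n-4}$. The lower-order pieces of Proposition \ref{prop:GM} contribute $\bar a+O^{(4)}(|x-y|^{6-n})$: for $n=5$ the constant $A_P$ supplies $\bar a$, while for $n\ge 6$ one has $|x-y|^{6-n}\ge c>0$ on $B_\delta$, so any constant term and the remaining error are absorbed into $O^{(4)}(|x-y|^{6-n})$ and $\bar a=0$. The main technical obstacle will be verifying these $O^{(4)}$ derivative bounds uniformly as $x$ varies in $B_\delta$; this hinges on the smooth dependence of $\kappa_P$ on its center (hence on $x$) and on the smoothness of the Riemannian distance away from the diagonal, and is routine but notationally involved.
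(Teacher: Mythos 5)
Your proposal is correct and follows essentially the same route as the paper's proof: a conformal change centered at the moving point $X=\exp_{\bar X}x$ (the paper's $g_X=v^{4/(n-4)}\tilde g$ is your $\hat g$ with $v=\kappa_P^{-1}$), an application of Proposition \ref{prop:GM} at that center, the conformal transformation law for the Green's function with $v(X)=1$, and the two distance/Taylor comparisons to return to the chart at $\bar X$. The only difference is cosmetic: you explicitly flag the uniformity in $x$ and the base-point dependence of the constant $A_P$ for $n=5$, points the paper passes over silently.
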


\begin{proof} We only prove the result when $(M,g)$ is not locally formally flat. Denote $X=\exp_{\bar X}x$ and $Y= \exp_{\bar X}y$ for $x,y\in B_\delta$, where $\delta>0$ depends only on $(M,\tilde g)$. For $X\neq \bar X$, we can find $g_{X}=v^{\frac{4}{n-4}} \tilde g$ such that in the $g_{X}$-normal coordinate centered at $X$ there hold $\det g_{X}=1$ and $v(Y)=1+O^{(4)}(dist_{g_{X}}(X,Y)^2)$. Let $G_{g_{X}}$ be the Green's function of $P_{g_{X}}$.   By Proposition \ref{prop:GM},
\[
G_{g_{X}}(X,Y)=\al_{n} dist_{g_{X}}(X,Y)^{4-n}+A+O^{(4)}(dist_{g_{X}}(X,Y)^{6-n}),
\] where  $A$ is a constant and $A=0$ if $n\ge 6$. By the conformal invariance of the Paneitz operator, we have  the transformation law
\[
G(X,Y) =G_{g_{X} }(X,Y) v(X)v(Y)=G_{g_{X} }(X,Y) v(Y).
\]
Since $g_{X}=v^{\frac{4}{n-4}} \tilde g$ and $v(Y)=1+O^{(4)}(dist_{g_{X}}(X,Y)^2)$, we obtain
\[
\begin{split}
dist_{g_{X}}(X,Y)&
=(1+O^{(4)}(|x-y|^2))dist_{\tilde g}(\exp_{\bar X}x,\exp_{\bar X}y)\\&
=(1+O^{(4)}(|x-y|^2))(1+O^{(4)}(|x|^2)+O^{(4)}(|y|^2))|x-y|,
\end{split}
\]
where $\tilde g$ is viewed as a Riemannian metric on $B_\delta$ because of the exponential map $\exp_{\bar X}$.

Therefore, we obtain
\[
G(\exp_{\bar X}x, \exp_{\bar X}y)=\al_{n}\frac{1+O^{(4)}(|\bar x|^2)+O^{(4)}(|y|^2)}{|\bar x-y|^{n-4}}+O^{(4)}(|\bar x-y|^{6-n}).
\]
If $X=\bar X$, the desired estimate follows from Proposition \ref{prop:GM}.
\end{proof}

The following positive mass type theorem for Paneitz operator has been proved through \cite{HuR}, \cite{GM} and \cite{HY14b}.

\begin{thm}\label{thm:positive mass} Let $(M,g)$ be a compact manifold of dimension $n\ge 5$, and $\bar X\in M$. Let  $\tilde g$ be a conformal metric of $g$ such that  $\det \tilde g=1$ in the $\tilde g$-normal coordinate $\{x_1,\dots, x_n\}$ centered at $\bar X$. Suppose also that $\lda_1(L_{g})>0$ and \eqref{condition:main2} holds.  If $n=5,6,7$, or $(M,g)$ is locally conformally flat, then the constant
 $A$ in Proposition \ref{prop:GM} is nonnegative, and $A=0$ if and only if $(M, g)$ is conformally diffeomorphic  to the standard $n$-sphere.
\end{thm}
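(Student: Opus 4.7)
The plan is to reduce the positive mass theorem for the Paneitz operator to the classical scalar curvature positive mass theorem via a sequence of two conformal changes on the punctured manifold $M\setminus\{\bar X\}$. First, form the Green's metric
\[
\hat g = G(\bar X,\cdot)^{\frac{4}{n-4}}\tilde g \quad \text{on } M\setminus\{\bar X\}.
\]
By the conformal invariance \eqref{eq:conformal change1} of $P_{\tilde g}$ and the fact that $P_{\tilde g}G(\bar X,\cdot)=0$ away from $\bar X$, the $Q$-curvature of $\hat g$ vanishes identically on $M\setminus\{\bar X\}$. Using inverted coordinates $y=x/|x|^2$ near $\bar X$ and the expansion from Proposition \ref{prop:GM}, verify that $(M\setminus\{\bar X\},\hat g)$ is an asymptotically flat end, with the constant $A$ appearing as the distinguished coefficient behind the model term $\alpha_n|x|^{4-n}$. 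In the locally conformally flat case the Weyl-driven corrections are absent, while for $5\le n\le 7$ the remainder $O^{(4)}(|x|)$ is of higher order than $A$, so the identification is clean and the asymptotic decay rate falls within the range required for the classical positive mass theorem.

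Next, upgrade $\hat g$ to a scalar-flat asymptotically flat metric. The hypothesis $\lambda_1(-L_g)>0$ combined with the conformal invariance of the Yamabe-type sign implies $\lambda_1(-L_{\hat g})>0$ on large compact subdomains of $M\setminus\{\bar X\}$. Solve the homogeneous equation $L_{\hat g}w=0$ on $M\setminus\{\bar X\}$ with $w>0$ and the normalization $w\to 1$ at infinity, obtained as a limit of boundary-value problems on exhausting compact domains; the positive Yamabe sign and the vanishing of $Q_{\hat g}$ ensure existence, positivity (by the maximum principle for the second-order operator $L_{\hat g}$), and the expansion
\[
w(y)=1+\frac{B}{|y|^{n-2}}+o(|y|^{2-n})
\]
for a constant $B$. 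Then $\bar g := w^{\frac{4}{n-2}}\hat g$ is a scalar-flat asymptotically flat metric on $M\setminus\{\bar X\}$ whose ADM mass can be computed from the combined expansions of $G$ and $w$.

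Finally, apply the Schoen–Yau/Witten positive mass theorem to $(M\setminus\{\bar X\},\bar g)$: the ADM mass is nonnegative, and zero only if $\bar g$ is isometric to flat $\mathbb{R}^n$. A direct expansion in conformal normal coordinates identifies the ADM mass of $\bar g$ with a positive multiple of $A$ plus a manifestly nonnegative contribution (arising from an integration-by-parts bulk term involving $|\nabla(w-1)|^2$). Hence $A\ge 0$. For rigidity, equality forces $\bar g$ to be flat, and tracing back through the two conformal changes forces $\tilde g$—hence $g$—to be conformal to the round sphere.

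The main obstacle is the precise mass identification in the critical low-dimensional cases $n=5,6,7$: one must verify that the higher-order Weyl-type corrections hidden in the $O^{(4)}(|x|)$ remainder of $G$ and in the solution $w$ do not contribute extraneous boundary terms to the ADM flux integral. This is exactly the point where the dimensional restriction enters, since for $n\ge 8$ the $\log$-term (respectively the polynomial $\psi_4$-correction) in Proposition \ref{prop:GM} would dominate $A$ and obstruct the identification — and this is why the theorem is only asserted in the locally conformally flat regime and in dimensions $\le 7$.
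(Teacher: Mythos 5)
You should first note that the paper does not prove Theorem \ref{thm:positive mass} at all: it is imported from \cite{HuR}, \cite{GM} and \cite{HY14b, HY14c}, so there is no internal argument to match your sketch against, and it must be judged on its own. Judged that way, its central step fails. If you blow up with the Paneitz Green's function, $\hat g=G(\bar X,\cdot)^{\frac{4}{n-4}}\tilde g$, then in the inverted coordinates $y=x/|x|^{2}$ the expansion $G=\al_n|x|^{4-n}+A+O^{(4)}(|x|)$ of Proposition \ref{prop:GM} gives
\[
\hat g_{ij}(y)=\al_n^{\frac{4}{n-4}}\Big(1+\tfrac{4A}{(n-4)\al_n}|y|^{4-n}\Big)\delta_{ij}+O(|y|^{-2})+\cdots,
\]
so the constant $A$ enters at order $|y|^{4-n}$, while the ADM mass of an $n$-dimensional asymptotically flat metric is read from the coefficient at order $|y|^{2-n}$. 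This is exactly where the fourth-order case differs from the Yamabe case: for the conformal Laplacian, $G_L\sim|x|^{2-n}+A'+\cdots$ together with the exponent $\frac{4}{n-2}$ places $A'$ precisely at the mass order, which is why Schoen's reduction works; for $P_g$ the constant term lands two orders too early. Consequently the classical positive mass theorem applied to your $\bar g=w^{\frac{4}{n-2}}\hat g$ gives no information about the sign of $A$; worse, a nonzero term of order $|y|^{4-n}$ (for $n=5$ it is $|y|^{-1}$) makes the ADM flux integral of $\bar g$ divergent, so $\bar g$ is not even in the class of metrics to which the Schoen--Yau/Witten theorem applies when $A\neq 0$. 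The sentence ``a direct expansion identifies the ADM mass of $\bar g$ with a positive multiple of $A$ plus a manifestly nonnegative contribution'' is therefore not a computation one can carry out; it is essentially the whole content of the theorem. Your closing remark about $n\ge 8$ also misdiagnoses the dimensional restriction: the restriction comes from the validity of the second-order positive mass theorem and from curvature correction terms in the expansions, not from the $\log$ or $\psi_4$ terms ``dominating $A$''.

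For comparison, the proofs in \cite{HuR}, \cite{GM} and \cite{HY14c} run the reduction in the opposite direction: one blows up with the Green's function of the conformal Laplacian, obtaining the scalar-flat asymptotically flat Schoen--Yau metric $G_L^{\frac{4}{n-2}}g$, whose ADM mass is nonnegative precisely when $n\le 7$ or $(M,g)$ is LCF and $\lda_1(-L_g)>0$; the real work is then to prove an inequality of the form $A\ge c\, m_{\mathrm{ADM}}+(\mbox{nonnegative integral terms})$ through integral identities for the fourth-order operator, and this is where \eqref{condition:main2} (or $Q_g\ge0$-type positivity) is used. The rigidity statement $A=0\Rightarrow(M,g)$ conformal to $\mathbb{S}^n$ requires tracking equality in those identities, not merely quoting the rigidity case of the second-order theorem. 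Two further, smaller points: the vanishing of $Q_{\hat g}$ is irrelevant to solving $L_{\hat g}w=0$, and the existence of a positive solution $w\to1$ on the noncompact end needs an actual argument (for instance via the Green's function of $-L_g$ on the compact manifold), since $R_{\hat g}$ has no a priori sign.
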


\begin{rem}\label{rem:positive mass} Suppose  that the assumptions in Theorem  \ref{thm:positive mass} hold. If $W(\bar X)=0$, it follows from Proposition 2.1 of \cite{HY14b} that, in the $\tilde g$-normal coordinates centered at $\bar X$, the Green's function $G$ of $P_{\tilde g}$ has the expansion
\[
G(\bar X, \exp_{\bar X}x)= \begin{cases} \al_8|x|^{-4}+\psi(\theta) +\log |x| O^{(4)}(|x|),& \quad n=8,\\[2mm]
\al_9|x|^{-5}(1+\frac{R_{,ij}(\bar X) x^ix^j |x|^2}{384}) +A+ O^{(4)}(|x|), &\quad n=9,
\end{cases}
\]
where $x=|x|\theta$, $\psi$ is a smooth function of $\theta$, and $A$ is constant. In dimension $n=8,9$, we say the positive mass type theorem holds for Paneitz operator if $\int_{\mathbb{S}^{n-1}} \psi(\theta)\,\ud \theta>0 $ and $A>0$ respectively.
\end{rem}

Let
\[
U_\lda(x):=\left(\frac{\lda}{1+\lda^2|x|^2}\right)^{\frac{n-4}{2}}, \quad \lda>0.
\]
These functions are the unique positive solutions of $\Delta^2 u=c(n)u^{\frac{n+4}{n-4}}$ in $\R^n$, $n\ge 5$, up to translations by Lin \cite{Lin}.  In the $\tilde g$-normal coordinates,   we have, by Lemma \ref{lem:GM2.8}, that
\be \label{eq:cor-GM}
P_{\tilde g} U_{\lda}= c(n)U_{\lda}^{\frac{n+4}{n-4}}+  f_{\lda}U_{\lda},
\ee
where $f_{\lda}(x)$ is a smooth function satisfying that  $\lda^{-k}|\nabla_x^kf_\lda(x)| $, $k=0,1,\dots, 5$, is uniformly bounded in $B_\delta$ by a constant independent of $\lda \ge 1$.
Indeed,
by direct computations,
\begin{align*}
\pa_rU_\lda&=(4-n)\lda^{\frac{n}{2}}(1+\lda^2r^2)^{\frac{2-n}{2}}r,\\
\pa_{rr}^2 U_\lda &=(4-n)(2-n)\lda^{\frac{n+4}{2}}(1+\lda^2r^2)^{\frac{-n}{2}}r^2+(4-n)\lda^{\frac{n}{2}}(1+\lda^2r^2)^{\frac{2-n}{2}}.
\end{align*}
Inserting these to the expression in Lemma \ref{lem:GM2.8} leads to  \eqref{eq:cor-GM}.

\begin{cor}\label{cor:GM2.8} Let $(M,\tilde g)$ be a smooth compact Riemannian manifold of dimension $n\ge 5$, on which $\mathrm{Ker} P_{\tilde g}=\{0\}$. Then there exists a small constant $\delta>0$,  depending only on $(M,\tilde g)$, such that if $\det \tilde g=1$ in the normal coordinates $\{x_1,\dots, x_n\}$ centered at $\bar X$, then
\[
U_{\lda}(x)=c(n)\int_{B_{\delta}}G(\exp_{\bar X} x, \exp_{\bar X} y) \{U_{\lda}(y)^{\frac{n+4}{n-4}}+c_\lda'(y) U_{\lda}(y)\}\,\ud y+c_\lda''(x),
\]
where $\delta>0$ depends only on $M,\tilde g$, and $c_\lda', c_\lda''$ are smooth functions  satisfying
\[
\lda^{-k}|\nabla^kc_\lda'(x)| \le C, \quad |\nabla ^kc_\lda''(x)|\le C\lda^{\frac{4-n}{2}},
\]
for $k=0,1,\dots, 5$ and  some $C>0$ independent of $\lda\ge 1$.
\end{cor}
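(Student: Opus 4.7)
The plan is to derive the identity from Green's representation for $P_{\tilde g}$ applied to a cut-off of $U_\lda$. Since $\mathrm{Ker}\,P_{\tilde g}=\{0\}$ by assumption, the Green's function $G$ exists uniquely and, by self-adjointness of $P_{\tilde g}$,
\[
\phi(X)=\int_M G(X,Y)P_{\tilde g}\phi(Y)\,\ud V_{\tilde g}(Y) \qquad \forall\,\phi\in C^\infty(M),\ X\in M.
\]
I fix a cutoff $\eta\in C_c^\infty(\R^n)$ with $\eta\equiv 1$ on $B_{\delta/2}$ and $\mathrm{supp}\,\eta\subset B_{3\delta/4}$, regard $\eta U_\lda$ as a smooth compactly supported function on $M$ via the normal coordinate chart, and apply the identity to $\eta U_\lda$ at a point $X=\exp_{\bar X}x$ with $x\in B_{\delta/4}$.

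Using $\det\tilde g\equiv 1$ together with the Leibniz rule, this yields
\[
U_\lda(x)=\int_{B_\delta} G(\exp_{\bar X}x,\exp_{\bar X}y)\,\Big(\eta(y)\,P_{\tilde g}U_\lda(y)+[P_{\tilde g},\eta]U_\lda(y)\Big)\,\ud y.
\]
Substituting \eqref{eq:cor-GM} for $P_{\tilde g}U_\lda$ and setting $c_\lda'(y):=f_\lda(y)/c(n)$ on $B_\delta$ produces the advertised term $c(n)\int G\{U_\lda^{(n+4)/(n-4)}+c_\lda' U_\lda\}\,\ud y$, while everything else is absorbed into
\[
c_\lda''(x):=\int_{B_\delta}G(\exp_{\bar X}x,\exp_{\bar X}y)\,\Big\{[P_{\tilde g},\eta]U_\lda(y)-c(n)(1-\eta(y))\big(U_\lda(y)^{\frac{n+4}{n-4}}+c_\lda'(y)U_\lda(y)\big)\Big\}\,\ud y.
\]
The bounds $\lda^{-k}|\nabla^k c_\lda'|\le C$ for $k=0,\ldots,5$ are immediate from those on $f_\lda$ noted just above the corollary.

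The crux is the estimate of $c_\lda''$ and its derivatives. Both integrands in its definition are supported in the annulus $A:=B_{3\delta/4}\setminus B_{\delta/2}$, where the explicit formula for $U_\lda$ and direct differentiation give the pointwise bound
\[
|U_\lda(y)|+\sum_{j=1}^{3}|\nabla^j U_\lda(y)|\le C\lda^{(4-n)/2},\qquad y\in A,
\]
with $C$ depending only on $\delta$. Consequently the full integrand is $O(\lda^{(4-n)/2})$. Since $x\in B_{\delta/4}$ stays uniformly away from $A$, the kernel $G(\exp_{\bar X}x,\exp_{\bar X}y)$ is smooth in $x$ for $y\in A$, with all $x$-derivatives uniformly bounded. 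Differentiating under the integral sign therefore gives $|\nabla^k c_\lda''(x)|\le C\lda^{(4-n)/2}$ for $0\le k\le 5$. Relabeling $\delta/4$ as the $\delta$ of the statement finishes the proof.

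The one point requiring care is the uniform smoothness of $G(X,Y)$ in $X$ for $Y$ ranging over the annulus $A$; since $A$ is compact and bounded away from the diagonal relative to the evaluation region $B_{\delta/4}$, this reduces to standard interior regularity for the fourth-order elliptic equation $P_{\tilde g}G(\cdot,Y)=0$ on $M\setminus\{Y\}$, applied along a compact family of source points.
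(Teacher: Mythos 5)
Your proposal is correct and follows essentially the same route as the paper's own proof: apply the Green's representation of $P_{\tilde g}$ to $\eta U_\lambda$, use \eqref{eq:cor-GM} to identify $c_\lambda'=f_\lambda/c(n)$, and absorb the cutoff/commutator terms into $c_\lambda''$, estimated via the decay $|\nabla^j U_\lambda|\le C\lambda^{\frac{4-n}{2}}$ on the support of $\nabla\eta$ and the smoothness of $G$ off the diagonal; you merely spell out the details the paper leaves implicit.
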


\begin{proof}  Let $\eta(x)=\eta(|x|)$ be a smooth cutoff function satisfying $
\eta(t)=1 ~ \mbox{for }t<\delta/2, ~\eta(t)=0 ~ \mbox{for }t>\delta.$ By the Green's representation formula, we have
\[
(U_\lda\eta)(x)=\int_{B_\delta} G(\exp_{\bar X} x, \exp_{\bar X} y) P_{\tilde g}(U_\lda\eta)(y)\,\ud y.
\]
Making use of \eqref{eq:cor-GM} and Lemma \ref{lem:GM2.8}, we see that $c_\lda'=\frac{f_\lda}{c(n)}$ and the  proof is finished.
\end{proof}

\subsection{Two Pohozaev type identities}

For $r>0$, define in Euclidean space
\[\begin{split}
\mathcal{P}(r,u):=\int_{\pa B_r}&\frac{n-4}{2}\Big(\Delta u \frac{\pa u}{\pa \nu}-u\frac{\pa }{\pa \nu}(\Delta u)\Big) -\frac{r}{2}|\Delta u|^2
\\&-x^k \pa_k u \frac{\pa }{\pa \nu}(\Delta u)+\Delta u   \frac{\pa }{\pa \nu}(x^k\pa_ku)\,\ud S,
\end{split}
\]
where $\nu=\frac{x}{r}$ is the unit  outward normal to $\pa B_r$.

\begin{prop}\label{prop:4-pohozaev}  Let $u\in C^4(\bar B_r)$ be a positive function and let
\[
E(u)=Ku^{p}-\Delta^2 u \quad \mbox{in }B_r,
\]
where $K\in C^1(\bar B_{r})$ and $p>1$. Then
\begin{align} \label{eq:poho-2.2}
\mathcal{P}(r,u)=& \int_{B_r} (x^k\pa_k u +\frac{n-4}{2} u) E(u)\,\ud x+\mathcal{N}(r,u),
\end{align}
where
\[
\begin{split}
\mathcal{N}(r,u):=& (\frac{n}{p+1}-\frac{n-4}{2}) \int_{B_r} Ku^{p+1}\,\ud x +\frac{1}{p+1}\int_{B_r} x^k\pa_k K  u^{p+1}\,\ud x  \\& -\frac{r}{p+1}\int_{\pa B_{r}} K u^{p+1}\,\ud S.
\end{split}
\]
\end{prop}

\begin{proof} Multiplying both sides of the equation by $u$ and $x\nabla u$ and integrating parts lead to the proposition. We omit the by now standard details. See, e.g.,  Li \cite{Li95},  Marques \cite{Marques}, Hebey-Robert \cite{HR11} for the proofs of the related Pohozaev type identities.

\end{proof}

\begin{lem}\label{lem:test-poho} If $G(x)=|x|^{4-n}+A+O^{(4)}(|x|)$, where $A$ is constant,  then
\[
\lim_{r\to 0}\mathcal{P}(r,G)=-(n-4)^2(n-2)A|\mathbb{S}^{n-1}|.
\]
\end{lem}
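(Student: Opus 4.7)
The plan is to exploit the fact that the integrand defining $\mathcal{P}(r,\cdot)$ is a symmetric quadratic form in its argument. Writing $G=G_1+G_2$ with $G_1(x)=|x|^{4-n}$ and $G_2(x)=A+h(x)$, where $h=O^{(4)}(|x|)$, and letting $B(\cdot,\cdot)$ denote the associated polarization so that $\mathcal{P}(r,u)=B(u,u)$, I have
\[
\mathcal{P}(r,G)=\mathcal{P}(r,G_1)+2B(G_1,G_2)+\mathcal{P}(r,G_2).
\]
The claim will follow if I show that, as $r\to 0$, only one summand inside $2B(G_1,A)$ survives and it equals $-(n-4)^2(n-2)A|\mathbb{S}^{n-1}|$.

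First I would verify $\mathcal{P}(r,G_1)\equiv 0$ by direct computation. Since $G_1$ is radial,
\[
\pa_\nu G_1=(4-n)r^{3-n},\quad \Delta G_1=2(4-n)r^{2-n},\quad \pa_\nu\Delta G_1=2(4-n)(2-n)r^{1-n},
\]
together with $x^k\pa_k G_1=(4-n)r^{4-n}$ and $\pa_\nu(x^k\pa_k G_1)=(4-n)^2r^{3-n}$. Substituted into the four summands of the integrand, each one becomes a constant multiple of $r^{5-2n}$, and the four coefficients telescope as $2(n-4)^2[-1-1+(n-2)-(n-4)]=0$. (This is the Pohozaev reflection of the fact that $G_1$ is biharmonic on $\R^n\setminus\{0\}$.)

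Next I would show that $\mathcal{P}(r,G_2)$ and the cross pieces $2B(G_1,h)$, $2B(A,h)$ all vanish as $r\to 0$. From $|G_2|\le C$ and $|\nabla^j G_2|\le Cr^{1-j}$ for $j=1,2,3,4$, every summand of the integrand of $\mathcal{P}(r,G_2)$ is pointwise $O(r^{-2})$, so the surface integral is $O(r^{n-3})\to 0$. For $2B(G_1,h)$ each summand pairs a factor of $G_1$ scaling like $r^\alpha$ with a factor of $h$ scaling like $r^\beta$, where $\alpha+\beta=2-n$; the integrand is $O(r^{2-n})$, hence the surface integral is $O(r)$. The piece $2B(A,h)$ is handled identically: its only nontrivial summand is $-\tfrac{n-4}{2}A\,\pa_\nu\Delta h=O(r^{-2})$, integrating to $O(r^{n-3})$.

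What remains is the principal cross term $2B(G_1,A)$. Because $A$ is constant, every factor in the integrand that differentiates $A$ vanishes, leaving only
\[
-\frac{n-4}{2}\,A\,\pa_\nu\Delta G_1=-\frac{n-4}{2}\,A\cdot 2(4-n)(2-n)r^{1-n}=-(n-4)^2(n-2)A\,r^{1-n}.
\]
Multiplying by $|\pa B_r|=|\mathbb{S}^{n-1}|r^{n-1}$ cancels the radial factor and produces exactly $-(n-4)^2(n-2)A|\mathbb{S}^{n-1}|$, the claimed limit. The main obstacle is purely bookkeeping—correctly polarizing the four quadratic pieces and keeping the signs straight in the radial derivatives—while the decay estimates for all error terms are automatic from the $O^{(4)}(|x|)$ control combined with $n\ge 5$.
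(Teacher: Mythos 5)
Your proposal is correct: the paper states Lemma \ref{lem:test-poho} without proof, and the intended verification is exactly the direct computation you carry out. Your polarization bookkeeping checks out — $\mathcal{P}(r,|x|^{4-n})\equiv 0$, the cross term with the constant $A$ contributes $-\tfrac{n-4}{2}A\,\pa_\nu\Delta(|x|^{4-n})=-(n-4)^2(n-2)A\,r^{1-n}$ on $\pa B_r$, and all terms involving the $O^{(4)}(|x|)$ remainder integrate to $O(r^{n-3})+O(r)\to 0$ for $n\ge 5$.
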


The following proposition is a special case of Proposition 2.15 in \cite{JLX3}.
\begin{prop} \label{prop:pohozaev} For $R>0$, let $0\le u\in C^1(\bar B_R)$ be a solution of
\[
u(x)= \int_{B_R} \frac{K(y)u(y)^{p}}{|x-y|^{n-4}}\,\ud y+ h(x),
\]
where $p>0$, $h(x)\in C^1(B_R)$, $\nabla h\in L^1(B_R)$.
Then
\begin{align*}
&\left(\frac{n-4}{2}-\frac{n}{p+1}\right) \int_{B_R} K(x)u(x)^{p+1}\,\ud x-\frac{1}{p+1} \int_{B_R} x\nabla K(x) u(x)^{p+1}\,\ud x \\ &
=\frac{n-4}{2} \int_{B_R} K(x) u(x)^p h(x)\,\ud x+ \int_{B_R} x\nabla h(x) K(x)u(x)^p \,\ud x - \frac{R}{p+1} \int_{\pa B_R} K(x) u(x)^{p+1}\,\ud S.
\end{align*}
\end{prop}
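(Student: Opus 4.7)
My plan is to establish the identity at the level of the integral equation itself by exploiting the homogeneity of the Riesz kernel $|x-y|^{4-n}$, rather than by differentiating the equation four times. Writing $u=v+h_R$ with $v(x):=\int_{B_R} K(y) u(y)^p |x-y|^{4-n}\,\ud y$, I would consider the auxiliary quantity
\[
I := \int_{B_R} K(x) u(x)^p \Bigl(x\cdot\nabla u(x) + \tfrac{n-4}{2} u(x)\Bigr) \ud x,
\]
which is the natural analogue of testing a differential equation against the scaling multiplier $x\cdot\nabla u+\tfrac{n-4}{2}u$, and compute $I$ in two different ways.

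Directly, using $u^p x\cdot\nabla u=\tfrac{1}{p+1} x\cdot\nabla u^{p+1}$ and integrating the first resulting term by parts against $K$, one finds
\[
I = \Bigl(\tfrac{n-4}{2}-\tfrac{n}{p+1}\Bigr)\!\int_{B_R}\! K u^{p+1}\,\ud x - \tfrac{1}{p+1}\!\int_{B_R}\! x\cdot\nabla K\, u^{p+1}\,\ud x + \tfrac{R}{p+1}\!\int_{\partial B_R}\! K u^{p+1}\,\ud S,
\]
which is exactly the LHS of the claimed identity plus the boundary term. On the other hand, splitting $I=I_v+I_h$ via $u=v+h_R$, the $h_R$-piece $I_h$ immediately reproduces the first two terms on the RHS, while for $I_v$ the definition of $v$ and Fubini give
\[
I_v = \int_{B_R}\!\!\int_{B_R}\! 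K(x)u(x)^p K(y)u(y)^p\Bigl[x\cdot\nabla_x|x-y|^{4-n}+\tfrac{n-4}{2}|x-y|^{4-n}\Bigr]\ud y\,\ud x.
\]
Relabeling $x\leftrightarrow y$ and averaging replaces the bracket by $\tfrac12\bigl[(x\cdot\nabla_x+y\cdot\nabla_y)|x-y|^{4-n}+(n-4)|x-y|^{4-n}\bigr]$, which vanishes by Euler's identity because $|x-y|^{4-n}$ is homogeneous of degree $4-n$ under the joint dilation $(x,y)\mapsto(\lambda x,\lambda y)$. Hence $I_v=0$, and equating the two expressions for $I$ reproduces exactly the claimed identity.

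The main technical obstacle is that the Riesz kernel is singular on the diagonal and $\nabla_x|x-y|^{4-n}$ is only marginally integrable, so both the differentiation of $v$ and the use of Fubini must be justified. I would handle this by replacing $|x-y|^{4-n}$ with the smooth regularization $(|x-y|^2+\varepsilon)^{(4-n)/2}$, performing all the manipulations in the smooth setting (where Euler's identity and integration by parts are immediate), and then passing $\varepsilon\to 0$ using the regularity hypotheses $u,h_R\in C^1(\bar B_R)$ and $\nabla h_R\in L^1(B_R)$ together with local integrability of the kernel and its gradient on $B_R\times B_R$ for $n\ge 5$; the dominated convergence theorem then delivers every term in the limit.
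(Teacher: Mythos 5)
Your proof is correct, and it is essentially the paper's own route: the paper offers no argument here beyond citing Proposition 2.15 of \cite{JLX3}, and your computation --- testing the equation against $x\cdot\nabla u+\tfrac{n-4}{2}u$ weighted by $Ku^p$, splitting $u=v+h_R$, and killing the double integral by symmetrizing in $x,y$ and using $x\cdot\nabla_x|x-y|^{4-n}+y\cdot\nabla_y|x-y|^{4-n}=(4-n)|x-y|^{4-n}$ --- is the standard proof of such integral Pohozaev identities and is, in essence, the argument of that reference. One minor caution: the regularized kernel $(|x-y|^2+\varepsilon)^{(4-n)/2}$ is not exactly homogeneous, so the symmetrized bracket leaves an error $(n-4)\,\varepsilon\,(|x-y|^2+\varepsilon)^{(2-n)/2}$ whose double integral is $O(\varepsilon)$ and hence harmless; in fact no regularization is needed at all, since $|\nabla_x |x-y|^{4-n}|=(n-4)|x-y|^{3-n}$ is integrable on $B_R\times B_R$ for $n\ge 5$, so differentiation under the integral sign and Fubini already justify every step for the singular kernel.
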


\section{Proof of Theorem \ref{thm:compact1}}

\label{section:negative}

The proof of Theorem \ref{thm:compact1} is divided into two steps.

\emph{Step 1}. $L^p$ estimate. Let $u\ge 0$ be a solution of \eqref{eq:-Q}. Integrating \eqref{eq:-Q} leads to
\[
\int_{M}u^p \ud vol_g =\left| -\int_{M} uP_g(1) \ud vol_g\right| \le \frac{n-4}{2} \|u\|_{L^p(M)} \|Q_g\|_{L^{p'}(M)},
\]
where $\frac{1}{p'}+\frac1p=1$.  It follows that  $\|u\|_{L^p(M)} ^{p-1} \le \frac{n-4}{2} \|Q_g\|_{L^{p'}(M)}$.

\emph{Step 2.} If $\mathrm{Ker} P_{g}=\{0\}$, there exists a unique Green function of $P_g$. If the kernel of $P_{g}$ is non-trivial, since the spectrum of Paneitz operator is discrete, there exists a small constant $\va>0$ such that the kernel of $P_g-\va$ is trivial. Let $G_g$ be the Green function of the operator $P_g-\va$, where $\va\ge 0$. Then there exists a constant $\delta>0$, depending only $M,g$ and $\va$, such that, for every $X\in M$, we have $G(X,Y)>0$ for $Y\in \mathcal{B}_{\delta}(X)$ and $|G_g(X,Y)|\le C(\delta,\va)$ for $Y\in M\setminus \mathcal{B}_{\delta}(X)$.  Rewrite the equation of $u$ as $P_g u-\va u=-(u^p+\va u). $
It follows from the Green representation theorem that
\begin{align*}
u(X)& =-\int_{M}G_g(X,Y)(u^p+\va u)(Y)\ud vol_g(Y)\\&
\le -\int_{M\setminus \mathcal{B}_{\delta}(X)}G_g(X,Y)(u^p+\va u)(Y)\ud vol_g(Y) \\&
\le C\max\{\|u\|_{L^p(M)}^p, \|u\|_{L^p(M)}\} \le C.
\end{align*}
By the arbitrary choice of $X$, we have $\|u\|_{L^\infty}\le C$. The higher order derivatives  estimates follow from standard linear elliptic partial differential equation theories; see Agmon-Douglis-Nirenberg \cite{ADN}.

Theorem \ref{thm:compact1} is proved.  \qed

\section{Blow up analysis for integral equations}
\label{s:blowup}

As mentioned in the Introduction, we will prove Theorem \ref{thm:main theorem} and Theorem \ref{thm:energy} by studying nonlinear integral equations on Riemannian manifolds. This section is devoted to the local theory of blow up solutions of nonlinear integral equations. This is inspired by Jin-Li-Xiong \cite{JLX3}, where the manifold is the unit sphere and the stereographic projection was used to have integral equations involving the standard Riesz kernels.

 We will use $A_1, A_2,A_3 $ to denote positive constants,  and $\{\tau_i\}_{i=1}^\infty$ to denote  a sequence of nonnegative constants satisfying $\lim_{i\to \infty}\tau_i=0$.  Set
\be \label{p}
p_i=\frac{n+4}{n-4}-\tau_i.
\ee
Let $\{G_i(x,y)\}_{i=1}^\infty$ be a sequence of functions on $B_3\times B_3$ satisfying
\be  \label{G}
\begin{aligned}
&G_i(x,y)=G_i(y,x),\qquad  G_i(x,y)\ge A_1^{-1}|x-y|^{4-n},\\[2mm]
& |\nabla^l_x G_i(x,y)|\le A_1|x-y|^{4-n-l}, \quad l=0,1,\dots, 5 \\[2mm]
 &G_i(x,y)=c_{n}\frac{1+O^{(4)}(|x|^2)+O^{(4)}(|y|^2)}{|x-y|^{n-4}}+\bar a_i+O^{(4)}(\frac{1}{|x-y|^{n-6}})
\end{aligned}
\ee
for all $x,y\in \bar B_3$, where $
c_n=\frac{n(n+2)}{2|\mathbb{S}^{n-1}|}
$ is the constant given towards the end of the introduction, $f=O^{(4)}(r^m)$ denotes some quantity satisfying $|\nabla^j f(r)|\le A_1 r^{m-j}$ for all  integers $1\le j\le 4$,  and $\bar a_i$ is a constant and $\bar a_i=0$ if $n\ge 6$.
Let  $\{K_i\}_{i=1}^\infty \in C^\infty(\bar B_3)$ satisfy
\be \label{K}
\lim_{i\to \infty}K_i(0)=1, \quad K_i\ge A_2^{-1} , \quad
\| K_i\|_{C^5(B_3)} \le A_2.
\ee
Let  $\{ h_i\}_{i=1}^\infty$ be a sequence of nonnegative functions in $C^\infty( B_3)$ satisfying for $0<r<1/2$,
\be \label{H}
\begin{aligned}
\max_{\bar B_{r}(x)} h_i \le A_2 \min_{\bar B_r(x)} h_i \quad \mbox{and }
\sum_{j=1}^5r^j|\nabla^j h_i(x)| \le A_2 \|h_i\|_{L^\infty(B_r(x))}, \quad x\in B_{2}.
 \end{aligned}
\ee

Given $p_i, G_i, K_i, $ and $ h_i$ satisfying \eqref{p}-\eqref{H}, let $0\le u_i\in L^{\frac{2n}{n-4}}(B_3)$
be a solution of
\be \label{eq:s1}
u_i(x)=\int_{B_3 }G_i(x,y) K_i(y) u_i^{p_i}(y)\,\ud y +h_i(x) \quad \mbox{in  } B_3.
\ee
It follows from \cite{Li04} and Proposition \ref{prop:local estimates} that $u_i\in C^{4}(B_3)$. In the following we will always assume $u_i\in C^{4}(B_3)$. We say that $\{u_i\}$ blows up if $\|u_i\|_{L^\infty(B_3)}\to \infty$ as $i\to \infty$.

\begin{defn}\label{def4.1}
We say that $x_i\to \bar x\in B_3$ is an isolated blow up point of $\{u_i\}$ if there exist
$0<\overline r<dist(\overline x,\pa B_3)$, $\overline C>0$, such that,
$x_i$ is a local maximum of $u_i$, $u_i(x_i)\to \infty$ and
\be\label{4.7}
u_i(x)\leq \overline C |x-x_i|^{-4/(p_i-1)} \quad \mbox{for all } x\in B_{\overline r}(x_i).
\ee
\end{defn}

Let $x_i\to \overline x$ be an isolated blow up of $u_i$. Define
\[
\overline u_i(r)=\frac{1}{|\pa B_r|} \int_{\pa B_r(x_i)}u_i\,\ud S \quad \mbox{and } \quad  \overline w_i(r)=r^{4/(p_i-1)}\overline u_i(r), \quad  r>0,
\]

\begin{defn}\label{def4.2}
We say $x_i \to \overline x\in B_3$ is an isolated simple blow up point, if $x_i \to \overline x$ is an isolated blow up point, such that, for some
$\rho>0$ (independent of $i$) $\overline w_i$ has precisely one critical point in $(0,\rho)$ for large $i$.
\end{defn}

The notations of isolated blow up points and isolated  simple blow up points were introduced by Schoen \cite{Schoen89, Schoen89b, Schoen91}.
If $x_i=\bar x$ in the above definitions, we will say $\bar x$ is an isolated or isolated simply blow up point for short. The conclusion of the following lemma was proved by  Djadli-Malchiodi-Ould Ahmedou \cite{DMO} for solutions of the equation $\Delta^2 u_i= K_i u_i^{p_i}$ under the assumption that $\Delta u_i<0$, where $K_i $ is uniformly bounded from below and above by positive constants and $1<p_i\le \frac{n+4}{n-4}$.

\begin{lem}\label{lem:harnack}  Given $p_i, G_i, K_i$ and $h_i$ satisfying \eqref{p}-\eqref{H}, let $0\le u_i\in C^{4}(B_3)$ be a solution of \eqref{eq:s1}.  Suppose that $0$ is an isolated blow up point of $\{u_i\}$  with $\bar r=2$ and  $\bar C= A_3$.
Then for any $0<r<1/3 $ we have
\[
\sup_{B_{2r}\setminus B_{r/2}} u_i\leq C \inf_{B_{2r}\setminus B_{r/2}}  u_i,
\]
where $C$ is a positive constant depending only on $n, A_1, A_2, A_3$.
\end{lem}

\begin{proof}  For every $0<r<1/3$, set
\[
w_i(x)=r^{4/(p_i-1)}u_i(rx), \quad \tilde h_i(x):=r^{4/(p_i-1)} h_i(rx),
\]
\[
G_{i,r}(x,y)=r^{n-4}G_i(rx,ry) \quad \mbox{for }r>0.
\]
By the equation of $u_i$, we have
\[
w_i(x)= \int_{B_{3/r}} G_{i,r}(x,y)K_i(ry)w_i(y)^{p_i}\,\ud y + \tilde h_i(x)\quad \mbox{for } x\in B_{3/r}.
\]
Since $0$ is an isolated blow up point of $u_i$,
\be \label{eq:out}
w_i(x)\leq A_3 |x|^{-4/(p_i-1)} \quad \mbox{for all  }x\in B_3.
\ee
Set $\om_1=B_{5/2}\setminus  B_{1/4}$, $\om_2=B_{2}\setminus  B_{1/2}$ and $
V_i(y)=K_i(ry)w_i(y)^{p_i-1}$. Thus $w_i$ satisfies the linear equation
\[
w_i(x)= \int_{\om_1} G_{i,r}(x,y)V_i(y)w_i(y)\,\ud y + \bar h_i(x)\quad \mbox{for } x\in B_{5/2}\setminus  B_{1/4},
\]
where
\[
\bar h_i(x)=\tilde h_i(x)+\int_{B_{3/r}\setminus \om_1} G_{i,r}(x,y)K_i(ry)w_i(y)^{p_i}\,\ud y.
\]
By \eqref{eq:out}  and \eqref{K}, $\|V_i\|_{L^\infty(\om_1)} \le C(n,A_1,A_2,A_3)<\infty$.
Since $K_i$ and $w_i$ are nonnegative, by  \eqref{G} on $G_i$ and \eqref{H} on $h_i$ we have $\max_{\bar \om_2}\bar h_i \le C(n,A_1,A_2) \min_{\bar \om_2} \bar h$. Applying Proposition \ref{prop:har} to $w_i$ gives $
\max_{\bar \om_2} w_i \le C \min_{\bar \om_2} w_i,$
where $C>0$ depends only on $n, A_1,A_2 $ and $A_3$. Rescaling back to $u_i$, the lemma follows.
\end{proof}

\begin{prop}\label{prop:blow up a bubble}
Suppose that $0\le u_i\in C^{4}(B_3)$ is a solution of \eqref{eq:s1} and all assumptions in Lemma \ref{lem:harnack} hold.
Let $R_i\rightarrow \infty$ with $R_i^{\tau_i}=1+o(1)$ and $\va_i\rightarrow 0^+$, where $o(1)$ denotes some quantity tending to $0$ as $i\to \infty$. Then we have,
after passing to a subsequence,
with  $m_i:=u_i(0)$,
\[
\|m_i^{-1}u_i(m_i^{-(p_i-1)/4} \cdot)-(1+ |\cdot|^2)^{(4-n)/2}\|_{C^3(B_{2R_i}(0))}\leq \va_i,
\]
\[
r_i:=R_im_i^{-(p_i-1)/4}\rightarrow 0\quad \mbox{as}\quad i\rightarrow \infty.
\]

\end{prop}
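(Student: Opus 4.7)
My plan is to rescale $u_i$ around its concentrating local maximum at $x_i=0$, extract a limit profile via Lin's Liouville theorem \cite{Lin}, and then upgrade to the quantitative convergence on $B_{2R_i}$ by a diagonal subsequence argument.

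I first set $w_i(y):=m_i^{-1}u_i(m_i^{-(p_i-1)/4}y)$, so that $w_i(0)=1$, $\nabla w_i(0)=0$ (since $0$ is a local max of $u_i$), and the isolated blow-up bound \eqref{4.7} rescales to $w_i(y)\le A_3|y|^{-4/(p_i-1)}$ on $B_{2m_i^{(p_i-1)/4}}(0)$. Substituting $x=m_i^{-(p_i-1)/4}\tilde x$ and $y=m_i^{-(p_i-1)/4}\tilde y$ in \eqref{eq:s1}, the rescaled equation reads
\[
w_i(\tilde x)=\int \tilde G_i(\tilde x,\tilde y)\,K_i(m_i^{-(p_i-1)/4}\tilde y)\,w_i(\tilde y)^{p_i}\,d\tilde y+\tilde h_i(\tilde x),
\]
with $\tilde G_i(\tilde x,\tilde y):=m_i^{p_i-1-n(p_i-1)/4}G_i(m_i^{-(p_i-1)/4}\tilde x,m_i^{-(p_i-1)/4}\tilde y)$ and $\tilde h_i(\tilde x):=m_i^{-1}h_i(m_i^{-(p_i-1)/4}\tilde x)$. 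The identity $(p_i-1)[1-n/4+(n-4)/4]=0$ renders the leading-order prefactor of the singular piece equal to $1$, so by \eqref{G} one obtains $\tilde G_i(\tilde x,\tilde y)\to c_n|\tilde x-\tilde y|^{4-n}$ in $C^4$ on compact sets off the diagonal; by \eqref{K}, $K_i(m_i^{-(p_i-1)/4}\tilde y)\to 1$ uniformly on compact sets; and by \eqref{H} together with the bound $h_i\le u_i\le A_3$ on a fixed annulus away from $0$, $\tilde h_i\to 0$ uniformly on compact sets.

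Next I establish uniform local $C^4$ estimates on $\{w_i\}$. For $|y|\ge 1$ a uniform $C^0$ bound follows from the decay estimate; for $|y|\le 2$ the Harnack inequality of Lemma \ref{lem:harnack}, transferred to $w_i$ on scaled annuli and combined with $w_i(0)=1$, gives a uniform $C^0$ bound. Differentiating the integral equation and using the derivative estimates \eqref{G} on $G_i$ and \eqref{H} on $h_i$ then produces uniform $C^4$ bounds on every $B_R$. Arzela--Ascoli extracts a subsequence with $w_i\to w_\infty$ in $C^3_{\mathrm{loc}}(\mathbb{R}^n)$, where $w_\infty\ge 0$, $w_\infty(0)=1$, $\nabla w_\infty(0)=0$. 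The tail $\int_{|\tilde y|>R}\tilde G_i K_i w_i^{p_i}\,d\tilde y$ is controlled by $w_i(\tilde y)\le A_3|\tilde y|^{-4/(p_i-1)}$ (note $4p_i/(p_i-1)+(n-4)>n$, so the decay gives uniform integrability), and dominated convergence yields
\[
w_\infty(\tilde x)=c_n\int_{\mathbb{R}^n}\frac{w_\infty(\tilde y)^{(n+4)/(n-4)}}{|\tilde x-\tilde y|^{n-4}}\,d\tilde y \quad \text{on }\mathbb{R}^n,
\]
equivalently $\Delta^2 w_\infty=c(n)w_\infty^{(n+4)/(n-4)}$. Lin's classification \cite{Lin} then forces $w_\infty=U_1=(1+|\cdot|^2)^{(4-n)/2}$, since $w_\infty(0)=1$ and $\nabla w_\infty(0)=0$ uniquely select this member of the conformal family.

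Finally, uniqueness of the limit implies that any subsequence of the rescaled functions admits a further subsequence converging to $U_1$ in $C^3_{\mathrm{loc}}(\mathbb{R}^n)$. Given $R_i\to\infty$ with $R_i^{\tau_i}\to 1$ and $\varepsilon_i\to 0^+$, a standard diagonal extraction produces a subsequence along which $\|w_i-U_1\|_{C^3(B_{2R_i})}\le\varepsilon_i$ for every $i$. After possibly thinning further, we also obtain $r_i=R_im_i^{-(p_i-1)/4}\to 0$, which is compatible with the growth constraints $\log R_i=o(1/\tau_i)$ and $m_i^{(p_i-1)/4}\to\infty$. The principal technical obstacle is the uniform higher-order regularity of $w_i$ in the absence of a second-order maximum principle for $P_g$: the bootstrap must be carried out directly from the integral representation, leveraging the derivative bounds on $G_i$ in \eqref{G} and the Harnack-type control on $h_i$ in \eqref{H}, rather than through the classical second-order elliptic machinery available in the Yamabe setting.
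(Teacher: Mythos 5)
Your overall strategy (rescale about the blow-up point, get local compactness, pass to the limit in the integral equation, classify the limit, then diagonalize) is the same as the paper's, but the step where you claim the uniform bound on $\{w_i\}$ near the origin has a genuine gap. You assert that for $|y|\le 2$ "the Harnack inequality of Lemma \ref{lem:harnack}, transferred to $w_i$ on scaled annuli and combined with $w_i(0)=1$, gives a uniform $C^0$ bound." Lemma \ref{lem:harnack} only compares $\sup$ and $\inf$ of $u_i$ on annuli $B_{2r}\setminus B_{r/2}$ centered at the blow-up point; it gives no relation between the value at the center and the values on nearby annuli, and chaining it across dyadic annuli produces constants that degenerate as the annuli shrink toward $0$. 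The a priori bound \eqref{4.7} still allows a second, much taller spike of $w_i$ at rescaled distance $o(1)$ from the origin (i.e., at original distance $\ll m_i^{-(p_i-1)/4}$), which is perfectly compatible with $0$ being a local maximum of $u_i$, with $w_i(0)=1$, and with the annulus Harnack inequality; your argument does not exclude it, yet excluding it is exactly what is needed before Proposition \ref{prop:local estimates} can be invoked for the $C^4$ bounds. The paper rules this out by a dedicated contradiction argument: if $\varphi_i(\bar x_i)=\sup_{B_1}\varphi_i\to\infty$, rescale a second time around $\bar x_i$; the bound $\varphi_i(\bar x_i)\le A_3|\bar x_i|^{-4/(p_i-1)}$ forces the old origin to land at bounded distance from the new one, the second-rescaled function is bounded by $1$ so the linear potential is bounded, and then the full-ball Harnack inequality for linear integral equations (Proposition \ref{prop:har}) yields $1=\tilde\varphi_i(0)\le C\,\tilde\varphi_i(z_i)\to 0$, a contradiction. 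Some argument of this kind must be added to your proof.

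A second, smaller gap is in your identification of the limit equation: you dominate the tail $\int_{|\tilde y|>R}$ by the decay $w_i(\tilde y)\le A_3|\tilde y|^{-4/(p_i-1)}$, but that decay is only available on the rescaled image of $B_2$; the hypotheses give no pointwise bound on $u_i$ on $B_3\setminus B_2$, so the outermost piece of the integral needs a separate argument (for instance, evaluating \eqref{eq:s1} at a point of $\pa B_1$ and using $G_i\ge A_1^{-1}|x-y|^{4-n}$, $K_i\ge A_2^{-1}$ gives $\int_{B_3\setminus B_2}u_i^{p_i}\le C$, so that piece contributes only $O(m_i^{-1})$ to $\varphi_i$). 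Once patched, your direct tail estimate does work, since $4/(p_i-1)\to (n-4)/2$ is uniformly positive, and it is a legitimately more elementary route than the paper's, which instead shows the tail converges in $C^4$ to a function $g(R,x)$, uses monotonicity in $R$ and the kernel bounds to see that its limit is a constant $c_0\ge 0$, and then proves $c_0=0$ by contradiction. Your citing Lin's PDE classification instead of the integral-equation classification of \cite{CLO} or \cite{Li04} used in the paper is acceptable, since the limit solves the integral equation with constant $c_n$ and hence the PDE with constant $c(n)$, and $w_\infty(0)=1$, $\nabla w_\infty(0)=0$ pin down the bubble.
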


\begin{proof} Set $\hat h_i(x) =\int_{B_3\setminus B_2}G_i(x,y) K_i(y) u_i^{p_i}(y)\,\ud y +h_i(x) $. By \eqref{G} and \eqref{H},  $\hat h_i$ also satisfies \eqref{H} for all $x\in B_1$.    Let
\[
\varphi_i(x)=m_i^{-1} u_i(m_i^{-(p_i-1)/4} x) \quad \mbox{for }|x|<2 m_i^{(p_i-1)/4}.
\]
By the equation of $u_i$,  we have,
\be \label{eq:scal}
\varphi_i(x)= \int_{B_{2 m_i^{(p_i-1)/4}}} \tilde G_i (x,y) \tilde K_i(y)\varphi_i(y)^{p_i}\,\ud y+\tilde h_i(x),
\ee
where $\tilde G_i (x,y)=G_{i,m_i^{-\frac{p_i-1}{4}}  }(x,y)$, $\tilde K_i(y)=K_i(m_i^{-\frac{p_i-1}{4}} y )$ and $\tilde h_i(x)= m_i^{-1} \hat h_i(m_i^{-\frac{p_i-1}{4}} x)$.

First of all, $\max_{\pa B_1 } \hat h_i\le \max_{\pa B_1 } u_i\le A_3$,  by  \eqref{H} for $\hat h_i$ we have
\be \label{eq: h goes}
\tilde h_i \to 0\quad \mbox{in } C^5_{loc}(\R^n) \quad \mbox{as } i\to \infty.
\ee

Secondly, since $0$ is an isolated blow up point of $u_i$,
\be \label{eq:scalbound}
\varphi_i(0)=1, \quad \nabla \varphi_i(0)=0, \quad  0<\varphi_i(x)\leq A_3 |x|^{-4/(p_i-1)}.
\ee
For any $R>0$, we claim that for sufficiently large $i$,
\be \label{eq:scalbound2}
\|\varphi_i\|_{C^{4}(B_R)}\leq C(R).
\ee

Indeed, by Proposition \ref{prop:local estimates} and \eqref{eq:scalbound}, it suffices to prove that $\varphi_i \le C$ in $B_1$.
If $\varphi_i(\bar x_i)=\sup_{B_{1}} \varphi_i \to \infty $, set
\[
\tilde \varphi_i(z)=\varphi_i(\bar x_i)^{-1}\varphi_i(\varphi_i(\bar x_i)^{-(p_i-1)/4}z+\bar x_i)\leq 1\quad \mbox{for }|z| \le \frac12 \varphi_i(\bar x_i)^{(p_i-1)/4}.
\]
By \eqref{eq:scalbound},
\[
\tilde \varphi_i(z_i)= \varphi_i(\bar x_i)^{-1} \varphi_i(0)\to 0
\]  for $z_i=-\varphi_i(\bar x_i)^{(p_i-1)/4}\bar x_i$.  Since $\varphi_i(\bar x_i) \leq  A_3 |\bar x_i|^{-4/(p_i-1)}$, we have $|z_i|\leq A_3^{4/(p_1-1)}$. Hence, we can find $t>0$ independent of $i$ such that such that $z_i\in B_t$.
 Applying Proposition \ref{prop:har} to $\tilde \varphi_i$ in $B_{2t}$ (since $\tilde\varphi_i$ satisfies a similar equation to \eqref{eq:scal}), we have $
1=\tilde \varphi_i(0)\le C \tilde \varphi_i(z_i)\to 0,$
which is impossible. Hence, $\varphi_i \le C$ in $B_1$.

It follows from \eqref{eq:scalbound2} that there exists a function $\varphi\in C^4(\R^n)$ such that, after passing subsequence,
\be\label{eq:phigeos}
\varphi_i(x)\to \varphi \quad \mbox{in }C^3_{loc}(\R^n)  \quad \mbox{as }i\to \infty.
\ee
For any fixed $x$ and  $R>2|x|$, by \eqref{G} and \eqref{K} and \eqref{eq:scalbound} we have
\begin{align}
\int_{B_{2 m_i^{(p_i-1)/4}}\setminus B_{R}}  \tilde G_i (x,y) \tilde K_i(y)\varphi_i(y)^{p_i}\,\ud y& \le C\int_{B_{2 m_i^{(p_i-1)/4}}\setminus B_{R}}  |y|^{4-n} |y|^{-\frac{4p_i}{p_i-1}}\,\ud y \nonumber  \\&
=O(R^{-\frac{n-4}{2}+O(\tau_i)}).
\label{eq:ggoes}
\end{align}
By \eqref{G} and \eqref{K},  we have
\[
\tilde G_i(x,y)\to c_n \frac{1}{|x-y|^{n-4}}\quad \forall~ x\neq y, \quad \mbox{and }\tilde K_i(y)\to K_i(0)=1.
\]
 Combining \eqref{eq: h goes}, \eqref{eq:phigeos} and \eqref{eq:ggoes} together, by \eqref{eq:scal} we have that for any fixed $R>0$ and $x\in B_{R/2}$
\be \label{eq:limit1}
\varphi(x)=c_{n}\int_{B_R}\frac{ \varphi(y)^{\frac{n+4}{n-4}}}{|x-y|^{n-4}}\,\ud y+O(R^{-\frac{4-n}{2}}).
\ee

Sending $R\to \infty$,  it follows from Lebesgue's monotone convergence theorem that
\[
\varphi(x)=c_{n} \int_{\R^n}\frac{\varphi(y)^{\frac{n+4}{n-4}}}{|x-y|^{n-4}} \ud y \quad x\in \R^n.
\]
By the classification theorem in \cite{CLO} or \cite{Li04}, using $\varphi(0)=1$ and $\nabla \varphi(0)=0$, we have
\[
\varphi(x)=\left(1+|x|^2\right)^{-\frac{n-4}{2}}.
\]
\end{proof}
Since passing to subsequences does not affect our proofs, in the rest of the paper we will always choose $R_i\to\infty$ with $R_i^{\tau_i}=1+o(1) $ first, and then $\va_i\to 0^+$ as small as we wish (depending on $R_i$) and then choose our subsequence $\{u_{j_i}\}$ to work with. Since $i\le j_i$ and $\lim_{i\to \infty}\tau_i=0$, one can ensure that $R_i^{\tau_{j_i}}=1+o(1) $ as $i\to \infty$. In the sequel, we will still denote the subsequences as $u_i, \tau_i$ and etc.

\begin{rem}\label{rem:blow} By checking the proof of Proposition \ref{prop:blow up a bubble}, together with the fact $\nabla^2 (1+|x|^2)^{-\frac{n-4}{2}}$ is negatively definite near zero and the $C^2$ convergence of the rescaled profiles in a fixed neighborhood of zero, the following statement holds. Let $0\le u_i\in C^{4}(B_3)$ be a solution of \eqref{eq:s1} and satisfy \eqref{4.7}.  Suppose that $u_i(0)\to \infty$ as $i\to \infty$, $\nabla u_i(0)=0$ and $\max_{B_3}u_i\le b u_i(0)$ for some constant $b\ge 1$ independent of $i$. Then, after passing to a subsequence, $0$ must be a local maximum point of $u_i$ for $i$ large. Namely, $0$ is an isolated blow up point of $u_i$ after passing to a subsequence.

\end{rem}

\begin{prop}\label{prop:lower bounded by bubble} Under the hypotheses of Proposition \ref{prop:blow up a bubble}, there exists a constant $C>0$, depending only on $n, A_1,A_2$ and $A_3$, such that,
\[
u_i(x)\geq C^{-1}m_i(1+m_i^{(p_i-1)/2}|x|^2)^{(4-n)/2}, \quad |x|\leq 1.
\]
In particular, for any $e\in \mathbb{R}^n$, $|e|=1$, we have
\[
u_i(e)\geq C^{-1}m_i^{-1+((n-4)/4)\tau_i}.
\]
\end{prop}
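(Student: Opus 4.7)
The plan is a standard inner/outer region decomposition, with the two regions meeting at the scale $r_i := R_i m_i^{-(p_i-1)/4}$ produced by Proposition \ref{prop:blow up a bubble}.

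First, for the \emph{inner region} $|x|\le r_i$, the estimate follows directly from Proposition \ref{prop:blow up a bubble} after unrescaling. Choosing $\va_i$ sufficiently small relative to $R_i$ (which is allowed by the subsequence convention recorded just after Proposition \ref{prop:blow up a bubble}), the $C^3$ closeness to the standard bubble forces
\[
\varphi_i(z)\ge \tfrac12\,(1+|z|^2)^{(4-n)/2}\quad\text{for }|z|\le R_i,
\]
and undoing the rescaling $x=m_i^{-(p_i-1)/4}z$ yields the asserted lower bound on $B_{r_i}$.

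Next, for the \emph{outer region} $r_i\le |x|\le 1$, I would appeal to the integral equation \eqref{eq:s1}. Since $h_i\ge 0$ and $G_i(x,y)\ge A_1^{-1}|x-y|^{4-n}$, restricting the integration to $B_{r_i/2}$ gives
\[
u_i(x)\ge (A_1A_2)^{-1}\int_{B_{r_i/2}}|x-y|^{4-n}\,u_i(y)^{p_i}\,\ud y.
\]
For $y\in B_{r_i/2}$ and $|x|\ge r_i$, one has $|x-y|\le 2|x|$. Changing variables $y=m_i^{-(p_i-1)/4}z$ converts the mass factor into
\[
\int_{B_{r_i/2}}u_i^{p_i}\,\ud y = m_i^{\,p_i-n(p_i-1)/4}\int_{B_{R_i/2}}\varphi_i(z)^{p_i}\,\ud z,
\]
and the $\varphi_i$ integral is bounded below by a positive constant independent of $i$ because $\varphi_i \to (1+|z|^2)^{(4-n)/2}$ in $C^3$ on any fixed ball (e.g.\ $B_1$). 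The arithmetic $p_i - n(p_i-1)/4 = -1 + (n-4)\tau_i/4$ then gives
\[
u_i(x)\ge C^{-1} m_i^{-1+(n-4)\tau_i/4}\,|x|^{4-n},\quad r_i\le |x|\le 1.
\]

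Finally I would verify that this outer estimate is exactly of the same form as the right-hand side of the claimed inequality. For $|x|\ge r_i$ we have $m_i^{(p_i-1)/2}|x|^2\ge R_i^2\ge 1$, so
\[
m_i\bigl(1+m_i^{(p_i-1)/2}|x|^2\bigr)^{(4-n)/2}\le m_i^{\,1+(p_i-1)(4-n)/4}\,|x|^{4-n}.
\]
Using $(p_i-1)(n-4)/4 = 2 - (n-4)\tau_i/4$, the exponent of $m_i$ equals $-1+(n-4)\tau_i/4$, matching the outer bound up to a constant. Combining with the inner region completes the pointwise estimate on $B_1$; plugging in $|x|=1$ immediately yields the ``in particular'' statement $u_i(e)\ge C^{-1}m_i^{-1+((n-4)/4)\tau_i}$.

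The main obstacle is more bookkeeping than analysis: one must verify that the two exponent computations $p_i-n(p_i-1)/4$ (arising from the rescaling of $u_i^{p_i}\,\ud y$) and $1+(p_i-1)(4-n)/4$ (arising from the target bubble at unit scale) both equal $-1+(n-4)\tau_i/4$, which is precisely the conformal balance that lets the inner and outer bounds match without any loss.
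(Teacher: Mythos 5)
Your proposal is correct and is essentially the paper's own argument: the paper likewise handles $r_i\le|x|\le 1$ by dropping $h_i$, restricting the Green representation to the bubble core $B_{r_i}$, rescaling, and using $\varphi_i\ge\tfrac12 U_1$ on $B_{R_i}$, with the inner region read off from Proposition \ref{prop:blow up a bubble}. The only cosmetic difference is that the paper keeps the kernel and invokes the identity $c_n\int_{\R^n}|w-z|^{4-n}U_1(z)^{\frac{n+4}{n-4}}\,\ud z=U_1(w)$ to produce $m_iU_1(m_i^{(p_i-1)/4}x)$ directly, whereas you bound the kernel by $(2|x|)^{4-n}$ and recover the same bound through the (correct) exponent bookkeeping.
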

\begin{proof} By Proposition \ref{prop:blow up a bubble}, with $U_{1 }(x)= (1+ |x|^2)^{-\frac{n-4}{2}}$,
\begin{align*}
 u_i(x)&\ge \frac{1}{C} \int_{B_{r_i}} |x-y|^{4-n} u_i(y)^{p_i}\,\ud y  \\&
 \ge  \frac{1}{C} m_i \int_{B_{R_i}} |x-y|^{4-n} U_1(y)^{p_i}\,\ud y \ge \frac{1}{2} C m_i U_1(m_i^{(p_i-1)} x).
 \end{align*}
\end{proof}

\begin{lem}[Comparison principle] \label{lem:cp} Let $\om_1 \subset \om$ be two open  sets in $\R^n$ and $\om_1$ is bounded.  Let $G\in L^{1}(\om \times \om)$, $G>0$ a.e in $\om\times \om$, and satisfy that for some small constant $\beta >0$,
\[
\al:=\operatorname{ess}\inf_{x\in \om_1} \inf_{\substack{ E\subset \om_1,\\ |E|\ge |\om_1|-\beta}} \int_{E}G(x,y)\,\ud y>0,
\]
 where $| E|$ denotes the Lebesgue measure of set $E$.  Let $f\in  C(\bar \om_1\times \R) $ and $\frac{f(x, s)}{s}$ be nondecreasing in $s\in (0,\infty)$ for each $x\in   \om_1 $.  Define $L u(x)= \int_{\om} G(x,y)u (y)\,\ud y$ for $u\in L^\infty( \om)$. If $u,v \in  L^\infty (\om)$ satisfy
\be\label{eq:inin}
\max\{ 0, f(x,u(x))- Lu(x)\} \le f(x,v(x))-Lv(x) \quad a.e. \mbox{ in } \om_1,
\ee
\be\label{eq:essinf++}
 u\le v \quad  a.e. \mbox{ in }\om \setminus \om_1, ~ v\ge 0  \quad a.e. \mbox{ in } \om  \quad \mbox{and }\operatorname{ess}\inf_{\om_1} v>0,
\ee
then either  $ u\le v$ a.e. in $\om_1$ or $u=v=0$ a.e. in $\om\setminus \om_1$, $u=tv$ and $f(\cdot, tv)=tf(\cdot, v)$ a.e. in $\om_1$ for some $t>1$.

\end{lem}

\begin{proof} Since $u\in L^\infty(\om_1)$, by \eqref{eq:essinf++} we see that
\[
\bar t=\inf \{t>1: tv\ge u \mbox{ a.e. in }\om_1\}
\]
is well defined and finite. Suppose $\bar t>1$, otherwise we are done.   If $|\{\bar t v-u=0\} \cap \om_1|>0$, by the hypotheses of the lemma  one can find $\bar x\in \om_1$ such that $t v(\bar x)=u(\bar x)>0$, \eqref{eq:inin} holds at $\bar x$ and $G(\bar x,\cdot)>0$ a.e. in $\om$.  It follows that
\begin{align*}
0&=f(\bar x, \bar t v(\bar x))- f(\bar x, u(\bar x))= \frac{f(\bar x, \bar t v(\bar x))}{\bar t v(\bar x)}\bar t v(\bar x)-f(\bar x, u(\bar x)) \\&
\ge \bar tf(\bar x, v(\bar x))- f(\bar x, u(\bar x))
\ge L(\bar t v-u)(\bar x) \ge 0
\end{align*}
and thus $\bar tv-u=0$ a.e. in $\om$. Since $\bar t>1$, by \eqref{eq:essinf++} it must have $u=v=0$ a.e. in $\om\setminus \om_1$. Since the above holds for almost every $\bar x\in \om_1$, we see that $f(\cdot, tv)=tf(\cdot, v)$ a.e. in $\om_1$.

If $|\{\bar t v-u=0\} \cap \om_1|=0$, one can find $\delta>0$ such that  $| \{\bar t v-u>\delta\}\cap \om_1| \ge |\om_1|-\beta $. For every sufficiently small  $\va>0$, by the definition of $\bar t$ we have $|\{\bar t v-u<\va\} \cap \om_1| >0$.  By the hypotheses of the lemma   one can find $\bar x\in \om_1$ such that $0\le t v(\bar x)-u(\bar x)<\va$, \eqref{eq:inin} holds at $\bar x$, $G(\bar x,\cdot)>0$ a.e. in $\om$ and $ \inf_{\substack{ E\subset \om_1,\\ |E|\ge |\om_1|-\beta}} \int_{E}G(\bar x,y)\,\ud y \ge \al$. It follows that
\begin{align*}
o(1)=f(\bar x, \bar t v(\bar x))- f(\bar x, u(\bar x))&
\ge \bar tf(\bar x, v(\bar x))- f(\bar x, u(\bar x))\\&
\ge L(\bar t v-u)(\bar x)  \ge \int_{\{\bar t v-u>\delta\}} G(\bar x, y) \delta \,\ud y\ge \al \delta>0,
\end{align*}
where $o(1)\to 0$ as $\va>0$. Sending $\va \to$, we obtain a contradiction.
The lemma is proved.
\end{proof}

\begin{lem} \label{lem:upbound1}  Suppose that the hypotheses of Proposition \ref{prop:blow up a bubble} hold and in addition that $ 0$ is also an isolated simple blow up point with constant $\rho>0$. Then there exist $\delta_i>0$, $\delta_i=O(R_i^{-4})$,
such that
\[
u_i(x)\leq C u_i(0)^{-\lda_i}|x|^{4-n+\delta_i},\quad \mbox{for all }r_i\leq |x|\leq 1,
\]
where $\lda_i=(n-4-\delta_i)(p_i-1)/4-1$ and $C>0$ depends only on $n, A_1,A_2, A_3$ and $\rho$.

\end{lem}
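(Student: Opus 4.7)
The plan is to combine the bubble profile near the origin with a Riesz-potential supersolution comparison, performed directly within the integral equation \eqref{eq:s1} since the fourth-order maximum principle is unavailable. The scheme is the integral-equation analogue of the Li-Zhu / Li-Zhang / Marques upper bound for the Yamabe equation.

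First I would extract a preliminary decay on the annulus $r_i\le|x|\le 1$. Writing $\bar w_i(r)=r^{4/(p_i-1)}\bar u_i(r)$, Proposition \ref{prop:blow up a bubble} gives $\bar u_i(r_i)\sim m_i R_i^{4-n}$, hence $\bar w_i(r_i)\le C R_i^{-(n-4)/2}(1+o(1))$. Since $\bar w_i(r)\to 0$ as $r\to 0^+$ and $\bar w_i$ has exactly one critical point in $(0,\rho)$ (located near the bubble scale $r\sim r_i/R_i$), the function $\bar w_i$ is monotonically nonincreasing on $[r_i,\rho]$. Combined with Lemma \ref{lem:harnack}, this yields
\[
u_i(x)\le C R_i^{-(n-4)/2+O(\tau_i)}\,|x|^{-4/(p_i-1)},\qquad r_i\le|x|\le \rho,
\]
so the effective potential $V_i(y):=K_i(y)u_i(y)^{p_i-1}$ satisfies $|y|^{4}V_i(y)\le C R_i^{-(n-4)(p_i-1)/2}=C R_i^{-4+O(\tau_i)}$ on that annulus (extended to $\rho\le|y|\le 1$ by standard Harnack).

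Next I would use the Riesz identity
\[
\int_{\mathbb{R}^n}\frac{dy}{|x-y|^{n-4}|y|^{n-\delta}}=c(n,\delta)\,|x|^{-(n-4-\delta)},\qquad 0<\delta<n-4,
\]
which makes $\phi_\delta(x):=|x|^{-(n-4-\delta)}$ an exact "eigenfunction" of the Riesz convolution, with $c(n,\delta)\sim 1/\delta$ as $\delta\to 0^+$. Pick $\delta_i=c_0 R_i^{-4}$ with the constant $c_0$ chosen so that $A_1\cdot c(n,\delta_i)\cdot\sup_{r_i\le|y|\le 1}|y|^4 V_i(y)\le 1/2$; this forces $\delta_i=O(R_i^{-4})$ and in particular ensures $R_i^{\delta_i}=1+o(1)$. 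The exponent $\lambda_i=(n-4-\delta_i)(p_i-1)/4-1$ is then determined by the matching condition that the bubble value $u_i(r_i)\sim m_i R_i^{4-n}$ equals $m_i^{-\lambda_i}r_i^{-(n-4-\delta_i)}$ up to the factor $R_i^{\delta_i}=1+o(1)$. Define the comparison function $\psi_i(x)=M_1 m_i^{-\lambda_i}|x|^{-(n-4-\delta_i)}+M_2$ with $M_1,M_2$ large (independent of $i$), arranged so that $\psi_i\ge u_i$ on $\{|x|=r_i\}\cup\{|x|=1\}$. Argue by contradiction: if $u_i(x_i^*)>\psi_i(x_i^*)$ at some $x_i^*$ with $r_i<|x_i^*|<1$, insert both sides into \eqref{eq:s1} at $x_i^*$ and split the integration into the inner bubble $|y|\le r_i$ (estimated directly by Proposition \ref{prop:blow up a bubble}, yielding at most a $C\,m_i^{-1+O(\tau_i)}|x_i^*|^{4-n}$ contribution that is absorbed by $M_1 m_i^{-\lambda_i}\phi_{\delta_i}$), the principal annulus $r_i\le|y|\le 1$ (dominated by the Riesz identity via the smallness of $|y|^4 V_i$), and the exterior $|y|\ge 1$ (controlled by Harnack away from the origin); combining with the bound on $h_i$ from \eqref{H} produces $u_i(x_i^*)\le (1/2+o(1))\psi_i(x_i^*)$, a contradiction.

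The main technical obstacle is making the Riesz supersolution identity work on the bounded annulus rather than on all of $\mathbb{R}^n$: the inner bubble mass from $|y|<r_i$ and the exterior tail from $|y|>1$ must both be absorbed as $o(1)\cdot\phi_{\delta_i}(x)$ contributions uniformly in $i$. This is precisely where the rate $\delta_i=O(R_i^{-4})$ is consumed, because $c(n,\delta_i)\sim 1/\delta_i\sim R_i^{4}$ must balance the borderline $R_i^{-4+O(\tau_i)}$ smallness of $|y|^{4}V_i(y)$ obtained in the preliminary step; taking $c_0$ in $\delta_i=c_0 R_i^{-4}$ sufficiently large (but fixed) closes the estimate.
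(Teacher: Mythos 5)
Your strategy is essentially the paper's: the preliminary bound $u_i(y)^{p_i-1}\le CR_i^{-4}|y|^{-4}$ on the annulus from the isolated simple hypothesis plus Harnack, then a comparison function built on $m_i^{-\lambda_i}|x|^{4-n+\delta_i}$ with $\delta_i=O(R_i^{-4})$ chosen so that the Riesz constant $\sim 1/\delta_i$ balances $R_i^{-4}$, then a sliding/contradiction argument inside the integral equation using positivity of $G_i$. But as written the proof does not reach the stated conclusion. Your comparison function is $\psi_i=M_1m_i^{-\lambda_i}|x|^{4-n+\delta_i}+M_2$ with $M_2$ a fixed constant, forced on you because on $\{|x|=1\}$ the only a priori information is $u_i\le A_3$, which is $O(1)$, not $o(1)$. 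The comparison argument then yields only $u_i\le M_1m_i^{-\lambda_i}|x|^{4-n+\delta_i}+M_2$, which is strictly weaker than the lemma: the lemma asserts $u_i\le Cm_i^{-\lambda_i}$ on $|x|=1$, i.e.\ decay of $u_i$ away from the blow up point at the rate $m_i^{-1+o(1)}$, and this is precisely the nontrivial content. The paper closes this gap with a further step you omit: it takes the outer coefficient to be $M_i\approx\max_{\pa B_\rho}u_i$ (not a universal constant), proves $u_i\le M_i\rho^{\delta_i}|x|^{-\delta_i}+Am_i^{-\lambda_i}|x|^{4-n+\delta_i}$, and then uses the monotonicity of $r^{4/(p_i-1)}\bar u_i(r)$ once more, comparing $r=\rho$ with a small fixed $\theta$, to absorb the $M_i$-term and conclude $M_i\le Cm_i^{-\lambda_i}$; only then does the full estimate follow. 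Without this bootstrapping of the boundary size, your argument proves nothing new near $|x|=1$.

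A secondary, fixable issue: the constant term $M_2$ is not compatible with your Riesz-balancing step. Integrating $G_i(x,\cdot)K_iu_i^{p_i-1}$ against a constant over the annulus costs $CR_i^{-4}\log(\rho/|x|)$ rather than $CR_i^{-4}\cdot R_i^{4}$, and since $R_i$ may grow arbitrarily slowly compared with $m_i$ (its growth is dictated by the convergence rate in Proposition \ref{prop:blow up a bubble}), the factor $R_i^{-4}\log(1/|x|)$ need not be small at intermediate scales $r_i\ll|x|\ll1$, where $\psi_i=O(1)$; so the claimed absorption into $(1/2+o(1))\psi_i$ can fail there. Replacing $M_2$ by $M_2|x|^{-\delta_i}$ (the paper's choice, again an exact Riesz eigenprofile with exponent $4+\delta_i$) removes the logarithm and restores the $\tfrac14$-absorption uniformly. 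With that correction and the missing boundary-decay step added, your approach coincides with the paper's proof.
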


\begin{proof}  We divide the proof into several steps. Unless otherwise stated,  we will use $C$ to denote some constant depending only on $n, A_1,A_3$ and $\rho$, which may change from line to line.

\textbf{Step 1.} From Proposition \ref{prop:blow up a bubble}, we see that
\begin{align}
u_i(x)&\le C m_i \left(\frac{1}{1+|m_i^{(p_i-1)/4}x|^2}\right)^{\frac{n-4}{2}}
\le C m_iR_i^{4-n} \quad \mbox{for all } |x|=r_i=R_i m_i^{-(p_i-1)/4}.
                             \label{4.8}
\end{align}
Let $\overline u_i(r)$ be the average of $u_i$ over the sphere of radius $r$ centered at $0$.
It follows from the assumption of isolated simple
blow up points and Proposition \ref{prop:blow up a bubble} that
\be\label{4.9}
r^{4/(p_i-1)}\overline u_i(r) \quad \mbox{is strictly decreasing for $r_i<r<\rho$}.
\ee
By Lemma \ref{lem:harnack},  \eqref{4.9} and \eqref{4.8},  we have, for all $r_i<|x|<\rho$,
\[
\begin{split}
|x|^{4/(p_i-1)}u_i(x)&\leq C|x|^{4/(p_i-1)}\overline u_i(|x|)
\leq C r_i^{4/(p_i-1)}\overline u_i(r_i)
\leq CR_i^{\frac{4-n}{2}},
\end{split}
\]
where we used $R_i^{\tau_i}=1+o(1)$.  Thus,
\be \label{eq:coeff}
u_i(x)^{p_i-1}\leq C R_i^{-4}|x|^{-4} \quad \mbox{for all } r_i\leq |x|\le \rho.
\ee

\medskip

\textbf{Step 2.}  Let
\[
\mathcal{L}_i\phi(y):= \int_{B_3} G_i(x,z)K_i(z) u_i(z)^{p_i-1}\phi(z)\,\ud z.
\]
Thus
\be \label{eq:linearize}
u_i=\mathcal{L}_i u_i+h_i.
\ee
Set $M_i:=4^n A_1^2\max_{\pa B_\rho} u_i+2\max_{\bar B_\rho}h_i$, and, with $A>1$ being chosen later,
\[
f_i(x):=M_i \rho^{\delta_i} |x|^{-\delta_i}+A  m_i^{-\lda_i} |x|^{4-n+\delta_i},
\]
\[
\phi_i(x)=\begin{cases}
f_i(x), & \quad r_i\le  |x|\le  \rho,\\
u_i(x),&\quad \mbox{otherwise} .
\end{cases}
\]

Making use of Proposition \ref{prop:blow up a bubble}, we have for $r_i<|x|<\rho $  that
\begin{align} \label{eq:first-part}
\int_{|y|\le r_i} \frac{u_{i}(y)^{p_i}}{|x-y|^{n-4}} \,\ud y \le Cm_i  U_1(m_i^{(p_i-1)/4}x)
\le Cm_i^{-\lda_i}|x|^{4-n+\delta_i}.
\end{align}

Note that for $4<\mu<n$ and $0<|x|<2$,
\begin{align*}
\int_{B_3} G_i(x,y)|y|^{-\mu}\,\ud y&\le A_1\int_{\R^n} \frac{1}{|x-y|^{n-4}|y|^{\mu}}\,\ud y
 \le C\Big( \frac{1}{n-\mu}+\frac{1}{\mu- 4} \Big)|x|^{-\mu+4}.
\end{align*}
By \eqref{eq:coeff}, one can properly choose $0<\delta_i=O(R_i^{-4})$ such that for all $r_i< |x|<\rho$,
\be \label{eq:a1}
\int_{r_i<|y|<\rho} G_i(x,y)K_i(y)u_i(y)^{p_i-1} f_i(y) \,\ud y\leq \frac{1}{4} f_i(x).
\ee
Finally, for every $r_i< |x|<\rho$,  by \eqref{G} we have
\begin{align}
\int_{\rho\le |y|<3} G_i(x,y)K_i(y)u_i(y)^{p_i-1} \phi_i(y)\,\ud y&\le A_1^2 2^{n+4}\int_{\rho\le |y|<3} G_i(\frac{\rho x}{|x|},y)K_i(y)u_i(y)^{p_i}\,\ud y \nonumber \\ &\le  A_1^2 2^{n+4} u_i(\frac{\rho x}{|x|}) \le 2^{4-n} M_i,
\label{eq:third-part}
\end{align}
where we have used \eqref{eq:s1} at $\rho x/|x|$ and the positivity of $h_i$.

By \eqref{eq:first-part}, \eqref{eq:a1} and \eqref{eq:third-part}, we have for $r_i<|x|< \rho$.
\begin{align}
\mathcal{L}_i \phi_i(x)+h_i(x) & \leq Cm_i^{-\lda_i}|x|^{4-n+\delta_i}  +\frac{f_i(x)}{ 4}+2^{4-n} M_i+ \max_{\bar B_\rho}h_i
\le  f_i(x)=\phi_i(x)
\label{eq:intineq}
\end{align}
by choosing $A$ large. Choosing $A$ to  be larger than the constant $C$ in \eqref{eq:coeff}, we have for $|x|=r_i$
\[
f_i(x)  >A m_i^{-\lda_i}R_i^{4-n+\delta_i} m_i^{(p_i-1)(n-4-\delta_i)/4}=A R_i^{4-n+\delta_i} m_i>u_i(x).
\]  Thus, using \eqref{eq:linearize}, \eqref{eq:intineq}, the definition of $f_i$ and $M_i$, and the fact that $\phi_i \ge u_i$ on $\pa (B_\rho \setminus B_{r_i})$, we apply Lemma \ref{lem:cp} to obtain
\be \label{eq:upper bounded}
u_i(x)\le \phi_i(x).
\ee

\textbf{Step 3.}  By \eqref{H}, we have $\max_{\bar B_{\rho}} h_i \le A_2\max_{\pa \bar B_{\rho}}h_i \le A_2 \max_{\pa \bar B_{\rho}}u_i$.
 Hence, $
 M_i\le C\max_{\pa B_{\rho}} u_i . $
For $r_i<\theta<\rho$,  we have, using \eqref{eq:upper bounded}, that
\[
\begin{split}
\rho^{4/(p_i-1)}M_i&\leq C \rho^{4/(p_i-1)}\overline u_i(\rho)
\leq C\theta^{4/(p_i-1)}\overline u_i(\theta)
\leq C\theta^{4/(p_i-1)}\{M_i\rho^{\delta_i}\theta^{-\delta_i}+Am_i^{-\lda_i}\theta^{4-n+\delta_i}\}.
\end{split}
\]
Choose $\theta=\theta(n,\rho,A_1,A_2, A_3)$ sufficiently small so that
\[
C\theta^{4/(p_i-1)}\rho^{\delta_i}\theta^{-\delta_i}\leq \frac12 \rho^{4/(p_i-1)}.
\]
Hence, we have $
M_i\le C m_i^{-\lda_i}.$
It follows from \eqref{eq:upper bounded} that
\[
u_i(x)\le \phi_i(x) \le Cm_i^{-\lda_i}|x|^{-\delta_i}+A m_i^{-\lda_i} |x|^{4-n+\delta_i} \le Cm_i^{-\lda_i} |x|^{4-n+\delta_i}.
\]\end{proof}

\begin{lem} \label{lem:aux1} Under the assumptions in Lemma \ref{lem:upbound1}, for $k<n$ we have
\[
I_k[u_i^{p_i}](x) \le C\begin{cases}
 m_i^{\frac{n-2k+4}{n-4}+o(1)}, &\quad \mbox{if } |x|<r_i, \\
 m_i^{-1+o(1)}|x|^{k-n}, &\quad \mbox{if }r_i \le |x|<1,
 \end{cases}
\]
where
\be \label{eq:localriesz}
I_k[v](x) =\int_{B_1}|x-y|^{k-n} v\,\ud y \quad \mbox{for }v\in L^1(B_1).
\ee
\end{lem}

\begin{proof} Making use of Proposition \ref{prop:blow up a bubble} and Lemma \ref{lem:upbound1},  we have
\begin{align*}
I_k[u_i](x) & = \int_{B_{r_i}}\frac{u_i(y)^{p_i}}{ |x-y|^{n-k}}  \,\ud y+ \int_{B_1\setminus B_{ r_i}} \frac{u_i(y)^{p_i}}{ |x-y|^{n-k}}\,\ud y\\&
\le Cm_i^{\frac{n-2k+4}{n-4}+o(1)} \int_{B_{R_i}} \frac{U_1(z)^{p_i}\,\ud z }{|m_i^{(p_i-1)/4} x-z|^{n-k}} +Cm_i^{-\frac{n+4}{n-4}+o(1)}\int_{B_1\setminus B_{ r_i}}\frac{\,\ud y}{|x-y|^{n-k}|y|^{n+4}}.
\end{align*}
\end{proof}

\begin{lem}\label{lem:error}
Under the assumptions in Lemma \ref{lem:upbound1},  we have
\[
 \tau_i=O(u_{i}(0)^{-2/(n-4)+o(1)}), \quad \mbox{ and consequently} \quad  m_i^{\tau_i}=1+o(1).
\]

\end{lem}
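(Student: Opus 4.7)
The plan is to apply a Pohozaev-type integral identity adapted to the kernel $G_i$, exploiting that $\tfrac{n-4}{2}-\tfrac{n}{p_i+1}=-\tfrac{(n-4)\tau_i}{2(p_i+1)}$ carries exactly the factor $\tau_i$ we wish to bound. Fix a radius $R\in(0,\rho)$. Using the expansion of $G_i$ in \eqref{G}, rewrite \eqref{eq:s1} on $B_R$ as
\[
u_i(x)=c_n\int_{B_R}\frac{K_i(y) u_i(y)^{p_i}}{|x-y|^{n-4}}\,\ud y+h_{R,i}(x),\quad x\in B_R,
\]
where $h_{R,i}$ absorbs the $c_n(O(|x|^2)+O(|y|^2))/|x-y|^{n-4}$ correction, the constant $\bar a_i$, the $O(|x-y|^{6-n})$ remainder, the outside integral over $B_3\setminus B_R$, and $h_i$. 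An integral Pohozaev identity in the spirit of Proposition~\ref{prop:pohozaev}, applied to this equation, produces
\[
-\frac{(n-4)\tau_i}{2(p_i+1)}\int_{B_R}c_n K_i u_i^{p_i+1}\,\ud x-\frac{1}{p_i+1}\int_{B_R}x\cdot\nabla(c_n K_i) u_i^{p_i+1}\,\ud x=\mathcal R_i,
\]
where $\mathcal R_i$ collects the boundary term $-\tfrac{R}{p_i+1}\int_{\partial B_R}c_n K_i u_i^{p_i+1}\,\ud S$ together with $\tfrac{n-4}{2}\int_{B_R} c_n K_i u_i^{p_i}h_{R,i}\,\ud x+\int_{B_R} x\cdot\nabla h_{R,i}\,c_n K_i u_i^{p_i}\,\ud x$.

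The leading term on the left is bounded below in absolute value by $c_0\tau_i$ for some $c_0>0$, since Proposition~\ref{prop:blow up a bubble} gives $\int_{B_R}K_i u_i^{p_i+1}\,\ud x\to K_i(0)\int_{\R^n}U_1^{2n/(n-4)}>0$. For the $\nabla K_i$ integral, split at $|x|=r_i$: on $B_{r_i}$, the change of variables $z=m_i^{(p_i-1)/4}x$ extracts a factor $|x|=m_i^{-2/(n-4)+o(1)}|z|$ from $x\cdot\nabla K_i$, so this portion is $O(m_i^{-2/(n-4)+o(1)})$; on $B_R\setminus B_{r_i}$, Lemma~\ref{lem:upbound1} bounds $u_i^{p_i+1}\leq C m_i^{-2n/(n-4)+o(1)}|x|^{-2n+o(1)}$, and integration against $|x|$ again yields $O(m_i^{-2/(n-4)+o(1)})$.

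The remainder $\mathcal R_i$ is controlled as follows. The boundary integral is $O(m_i^{-2n/(n-4)+o(1)})$ by Lemma~\ref{lem:upbound1}. The $h_{R,i}$-contributions are estimated using $\int_{B_R}u_i^{p_i}\,\ud x=O(m_i^{-1+o(1)})$ (from rescaling on $B_{r_i}$ and Lemma~\ref{lem:upbound1} on $B_R\setminus B_{r_i}$) together with the $I_k$-bounds of Lemma~\ref{lem:aux1}, so each correction to $G_i$ produces a term of size $O(m_i^{-2/(n-4)+o(1)})$ or smaller for $n\geq 5$. Combining, $c_0\tau_i\leq C m_i^{-2/(n-4)+o(1)}$, proving the first claim. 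The consequence $m_i^{\tau_i}=1+o(1)$ follows from $m_i^{\tau_i}=\exp(\tau_i\log m_i)$ and $\tau_i\log m_i\leq C m_i^{-2/(n-4)+o(1)}\log m_i=o(1)$.

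The main obstacle is the bookkeeping of the many error terms coming from the non-Riesz parts of $G_i$ and from the outside integral $\int_{B_3\setminus B_R}$ inside $h_{R,i}$. The most delicate case is $n=5$, where $\bar a_i$ does not vanish and contributes $\bar a_i\int K_i u_i^{p_i}=O(m_i^{-1+o(1)})$ to $h_{R,i}$; combined with $\int K_i u_i^{p_i}=O(m_i^{-1+o(1)})$ in $\mathcal R_i$, this gives $O(m_i^{-2+o(1)})$, which is $O(m_i^{-2/(n-4)+o(1)})$ precisely because $2/(n-4)=2$ when $n=5$.
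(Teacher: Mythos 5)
Your argument is correct and is essentially the paper's own proof: the paper likewise rewrites \eqref{eq:s1} on a fixed ball with the Riesz kernel $c_n|x-y|^{4-n}$ and a remainder $b_i=Q_i'+Q_i''+h_i$, applies Proposition \ref{prop:pohozaev}, lower-bounds $\int u_i^{p_i+1}$ via the bubble, and controls all error terms through Lemma \ref{lem:upbound1}, Lemma \ref{lem:aux1} and \eqref{H}. One small precision: where you invoke Proposition \ref{prop:blow up a bubble}, state the leading term as a lower bound $\int_{B_R}K_iu_i^{p_i+1}\ge c_0>0$ (the scaling exponent $\tau_i(n-4)/4\ge 0$ and $R_i^{\tau_i}=1+o(1)$ make this immediate) rather than as a limit, since $m_i^{\tau_i}=1+o(1)$ is only available after the lemma is proved.
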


\begin{proof}  For $x\in B_1$, we write equation \eqref{eq:s1} as
\be \label{eq:flat}
u_i(x)= c_{n}\int_{B_1} \frac{K_i(y)u_i(y)^{p_i}}{|x-y|^{n-4}}\,\ud y +b_i(x),
\ee
where $b_i(x):=Q'_i(x)+Q''_i(x) +h_i(x)$,
\begin{align*}
Q_i' (x):&=\int_{B_1}(G_i(x,y)-c_{n}|x-y|^{4-n}) K_i(y) u_i(y)^{p_i}\,\ud y,\\
Q_i''(x):&= \int_{B_3\setminus B_1} G_i(x,y)K_i(y) u_i(y)^{p_i}\,\ud y.
\end{align*}
Notice that
\[
|G_i(x,y)-c_{n}|x-y|^{4-n}| \le \frac{C|x|^2 }{|x-y|^{n-4}}+|\bar a_i|+C|x-y|^{6-n}.
\]
\[
|\nabla_x(
G_i(x,y)-c_{n}|x-y|^{4-n})| \le \frac{C|x|^2 }{|x-y|^{n-3}}+\frac{C|x| }{|x-y|^{n-4}}+C|x-y|^{5-n}.
\]
Hence, with $I_k [\cdot] $ denoting the Riesz potential in \eqref{eq:localriesz},
\begin{align*}
|Q_i'(x)| & \le C(|x|^2 u_i(x)+|a_i| \|u_i^{p_i}\|_{L^1(B_1)}+I_{6} [u_i^{p_i}](x)), \\
|\nabla Q_i'(x)|  &\le C ( |x|^2 I_{3}+|x|I_{4} +I_{5}) [u_i^{p_i}](x).
\end{align*}

By Lemma \ref{lem:upbound1}, we have  $u_i(x)\le  C m_i^{-\lda_i}$ for all $x\in B_{3/2}\setminus B_{1/2}$. Hence, $Q_i''(x) +h_i(x)\le u_i(x)\le C m_i^{-1+o(1)}$ for any $x\in \pa B_1$ . It follows from \eqref{H} that
\[
\max_{\bar B_2}h_i(x) \leq C \min_{\pa B_1} h_i(x)\le C m_i^{-1+o(1)}, \quad
\max_{B_1}|\nabla h_i|\le C\max_{\bar B_2}h_i \le C m_i^{-1+o(1)} .
\]
Since $u_i$ is nonnegative,  it is easy to check by using \eqref{G} that
\[
|Q_i''(x)|+|\nabla Q_i''(x)|\le Cm_i^{-1+o(1)} \quad \mbox{for all }x\in B_1.
\]

Applying Proposition \ref{prop:pohozaev} to \eqref{eq:flat}, we have
\begin{align}
 \tau_i & \int_{B_1} u_i(x)^{p_i+1}-A_2\int_{B_1}  |x| u_i(x)^{p_i+1}  \,\ud x  \nonumber\\& \leq  C\Big(\int_{B_1}(|Q'_i(x)|+|x||\nabla Q_i'(x)) u_i(x)^{p_i}+ m_i^{-1+o(1)}\int_{ B_1} u_i^{p_i}+  \int_{\pa B_1} u_i^{p_i+1}\,  \ud s\Big).
 \label{eq:a6}
\end{align}
By Proposition \ref{prop:blow up a bubble} and a change of variables,
\begin{align*}
\int_{B_1} u_i(x)^{p_i+1} \,\ud x &\ge C^{-1}\int_{B_{r_i}} \frac{m_i^{p_i+1}}{(1+|m_i^{(p_i-1)/4}y|^2)^{(n-4)(p_i+1)/2}}\,\ud y
\ge C^{-1} m_i^{\tau_i(1-n/4)}.
\end{align*}
By Proposition \ref{prop:blow up a bubble} and  Lemma \ref{lem:upbound1},  we have
\[
 \int_{B_{1}} u_i^{p_i} \le C m_i^{-1+O(\tau_i)}, \quad \int_{\pa B_1} u_i^{p_i+1}\,  \ud s \le Cm_i^{-2n/(n-4) +O(\tau_i)},
\]
\be \label{eq:cor-energy-2}
 \int_{B_{1}} |x|^su_i^{p_i+1} \le C m_i^{-2s/(n-4)+O(\tau_i)} \quad \mbox{for }-n<s<n,
\ee
It follows from Lemma \ref{lem:aux1} that
\[
 \int_{B_{1}} (|Q'_i(x)|+|x||\nabla Q_i'(x)|)  u_i(x)^{p_i+1}\,\ud x\le  Cm_i^{-2/(n-4)+O(\tau_i)}.
\]
The lemma follows after inserting  the above inequalities to \eqref{eq:a6}.
\end{proof}

\begin{lem}\label{lem:aux3} For $-4<s<4$, we have, as $i\to \infty$,
\[
m_i^{1+\frac{2s}{n-4}} \int_{B_{r_i}}|y|^s u_i(y)^{p_i}\,\ud y  \to \int_{\R^n}|z|^s (1+|z|^2)^{-\frac{n+4}{2}}\,\ud z,\]
\[
m_i^{1+\frac{2s}{n-4}} \int_{B_1\setminus B_{r_i}}|y|^s u_i(y)^{p_i}\,\ud y  \to 0.
\]
\end{lem}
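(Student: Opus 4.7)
The plan is to handle the two integrals separately: for the inner one, rescale to the bubble profile and apply dominated convergence; for the outer one, apply the pointwise upper bound from Lemma \ref{lem:upbound1}.

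\smallskip

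\textbf{Inner integral.} I would change variables $y = m_i^{-(p_i-1)/4} z$, which maps $B_{r_i}$ onto $B_{R_i}$, and express everything in terms of $\varphi_i(z) = m_i^{-1} u_i(m_i^{-(p_i-1)/4}z)$. A direct count of the powers of $m_i$ that appear gives
\[
m_i^{1+\frac{2s}{n-4}} \int_{B_{r_i}}|y|^s u_i(y)^{p_i}\,\ud y
= m_i^{\tau_i \cdot (n+s-4)/4}\int_{B_{R_i}} |z|^s \varphi_i(z)^{p_i}\,\ud z,
\]
where the exponent on $m_i$ collapses using $p_i-1 = 8/(n-4)-\tau_i$ (this is the routine algebraic computation that I would check but not display). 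By Lemma \ref{lem:error}, $m_i^{\tau_i}=1+o(1)$, so the prefactor is $1+o(1)$ for any fixed $s$. Now by Proposition \ref{prop:blow up a bubble}, $\varphi_i \to U(z):=(1+|z|^2)^{-(n-4)/2}$ in $C^3_{\mathrm{loc}}(\R^n)$, and moreover $\varphi_i(z)\le A_3 |z|^{-4/(p_i-1)}$ on $|z|\le R_i$ (the rescaled isolated blow-up bound). Thus for $|z|\ge 1$,
\[
|z|^s\varphi_i(z)^{p_i} \le C\,|z|^{s - 4p_i/(p_i-1)} \le C\,|z|^{s-n-4+O(\tau_i)},
\]
which is dominated by an $L^1$-function at infinity since $s<4$; near the origin, $\varphi_i$ is bounded and $|z|^s$ is locally integrable since $s>-4$. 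Dominated convergence (together with $p_i\to (n+4)/(n-4)$) then yields
\[
\int_{B_{R_i}} |z|^s \varphi_i(z)^{p_i}\,\ud z \longrightarrow \int_{\R^n}|z|^s (1+|z|^2)^{-(n+4)/2}\,\ud z.
\]

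\smallskip

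\textbf{Outer integral.} On the annulus $r_i\le |y|\le 1$, apply Lemma \ref{lem:upbound1} to get
$u_i(y)^{p_i} \le C\, m_i^{-\lambda_i p_i} |y|^{(4-n+\delta_i) p_i}$,
where $\lambda_i \to 1$, $\delta_i = O(R_i^{-4})=o(1)$, and $p_i\to (n+4)/(n-4)$. The exponent $s+(4-n+\delta_i)p_i$ stays below $-n$ for large $i$ because $s<4$, so
\[
\int_{B_1\setminus B_{r_i}} |y|^s u_i(y)^{p_i}\,\ud y \le C\, m_i^{-\lambda_i p_i}\, r_i^{\,s+(4-n+\delta_i)p_i + n}.
\]
Substituting $r_i = R_i m_i^{-(p_i-1)/4}$ and again carrying out the routine power count (the leading coefficient of $m_i$ is exactly $-1-\tfrac{2s}{n-4}$, while the leading coefficient of $R_i$ is $s-4$), one obtains
\[
m_i^{1+\frac{2s}{n-4}}\int_{B_1\setminus B_{r_i}}|y|^s u_i(y)^{p_i}\,\ud y \le C\, R_i^{s-4+o(1)}\, m_i^{o(1)\cdot\tau_i} = o(1),
\]
using $s<4$, $R_i\to\infty$, and $m_i^{\tau_i}=1+o(1)$ from Lemma \ref{lem:error}.

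\smallskip

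\textbf{Main obstacle.} The technical heart is the bookkeeping of $m_i$-powers in both steps: everything needs to cancel up to a factor $m_i^{O(\tau_i)}$, and the outer bound must give a negative power of $R_i$. The first cancellation is essentially the $\frac{n+4}{n-4}$-critical scaling; the second relies crucially on $s<4$ (so that $R_i^{s-4}\to 0$) and on $s>-4$ (so that the inner integral is locally integrable near the origin). Once Lemma \ref{lem:error} is invoked to absorb the $m_i^{\tau_i}$ factors, both conclusions follow.
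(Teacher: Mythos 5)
Your overall route is the paper's: rescale and apply dominated convergence for the part over $B_{r_i}$, and use the decay from Lemma \ref{lem:upbound1} plus a power count for the annulus. The outer estimate and both power counts are correct (the prefactor $m_i^{\tau_i(n+s-4)/4}$ and the $R_i^{\,s-4+o(1)}$ factor are exactly what the paper obtains, absorbed via Lemma \ref{lem:error}). However, the domination you offer for the inner integral contains a genuine error. From the isolated blow up bound you get $\varphi_i(z)\le A_3|z|^{-4/(p_i-1)}$, but $\frac{4p_i}{p_i-1}\to \frac{n+4}{2}$, not $n+4$ (since $p_i-1\to \frac{8}{n-4}$). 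So this hypothesis alone only yields $|z|^s\varphi_i(z)^{p_i}\le C|z|^{\,s-\frac{n+4}{2}+O(\tau_i)}$, which is integrable at infinity only when $s<\frac{4-n}{2}$; for most of the range $-4<s<4$ (already $s=0$) there is no integrable majorant from this bound, and a priori $\int_{B_{R_i}}|z|^s\varphi_i^{p_i}$ could grow like $R_i^{\,n+s-\frac{n+4}{2}}$. As written, the dominated convergence step fails.

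The missing ingredient is the bubble approximation itself, which is what the paper's "in view of Proposition \ref{prop:blow up a bubble}" is doing. By that proposition $\|\varphi_i-U_1\|_{C^3(B_{2R_i})}\le\va_i$, and by the convention fixed right after it, $\va_i$ may be chosen as small as we wish depending on $R_i$; taking for instance $\va_i\le (1+4R_i^2)^{-\frac{n-4}{2}}\le \min_{B_{2R_i}}U_1$ gives $\varphi_i\le 2U_1$ on $B_{R_i}$. Moreover $U_1^{p_i}=U_1^{\frac{n+4}{n-4}}U_1^{-\tau_i}\le C\,U_1^{\frac{n+4}{n-4}}$ on $B_{R_i}$, because $U_1^{-\tau_i}\le C R_i^{(n-4)\tau_i}=1+o(1)$. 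Hence $|z|^s\varphi_i(z)^{p_i}\le C|z|^s(1+|z|^2)^{-\frac{n+4}{2}}\in L^1(\R^n)$, and dominated convergence gives the stated limit. Note also that you cannot instead invoke the uniform bound $u_i(x)\le Cu_i(0)^{-1}|x|^{4-n}$ of Proposition \ref{prop:upbound2} on $B_{r_i}$: that proposition comes later and its proof uses the present lemma, so this would be circular. With the corrected majorant, your argument coincides with the paper's proof.
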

\begin{proof} The first assertion  follows from Proposition \ref{prop:blow up a bubble} and Lemma \ref{lem:error} as we computed before.

By Lemma \ref{lem:upbound1} and Lemma \ref{lem:error}, we have
\begin{align*}
\int_{r_i\le |y|<1}|y|^s u_i(y)^{p_i}\,\ud y\le  Cm_i^{-1-\frac{2s}{n-4} } R_i^{s-4}.
\end{align*}
Thus the second convergence result in the lemma follows.
\end{proof}

\begin{prop}\label{prop:upbound2} Under the assumptions in Lemma \ref{lem:upbound1}, we have
\[
u_i(x)\leq Cu_i^{-1}(0)|x|^{4-n}\quad \mbox{for all } |x|\leq 1.
\]
\end{prop}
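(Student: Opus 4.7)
The goal is to upgrade the rough estimate $u_i(x)\le Cm_i^{-\lambda_i}|x|^{4-n+\delta_i}$ from Lemma~\ref{lem:upbound1} (where $\lambda_i\to 1$ and $\delta_i\to 0^+$) into the sharp bound $u_i(x)\le Cm_i^{-1}|x|^{4-n}$, by re-inserting the rough bound into the integral equation \eqref{eq:s1} and exploiting the sharp integral asymptotics of Lemma~\ref{lem:aux3} together with Lemma~\ref{lem:error}.

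In the inner range $|x|\le r_i$, the bubble approximation of Proposition~\ref{prop:blow up a bubble} gives $u_i(x)\le Cm_i(1+m_i^{(p_i-1)/2}|x|^2)^{(4-n)/2}$. A two-case analysis depending on whether $m_i^{(p_i-1)/2}|x|^2$ is $\le 1$ or $\ge 1$, combined with $m_i^{\tau_i}=1+o(1)$ from Lemma~\ref{lem:error}, immediately yields $u_i(x)\le Cm_i^{-1}|x|^{4-n}$ with a uniform constant on this range.

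For $r_i\le |x|\le 1$, substitute into \eqref{eq:s1} and split $B_3$ into four regions: (I) $|y|\le |x|/2$, (II) $|x|/2<|y|<2|x|$, (III) $2|x|\le |y|\le 1$, (IV) $1<|y|<3$. In (I), $G_i(x,y)\le C|x|^{4-n}$ since $|x-y|\ge |x|/2$, and Lemma~\ref{lem:aux3} with $s=0$ yields $\int_{B_1}u_i^{p_i}\le Cm_i^{-1}$; this produces the leading contribution $Cm_i^{-1}|x|^{4-n}$. In (II) and (III), Lemma~\ref{lem:upbound1} gives $u_i(y)^{p_i}\le Cm_i^{-\lambda_i p_i}|y|^{(4-n+\delta_i)p_i}$; direct integration gives a contribution of order $Cm_i^{-\lambda_i p_i}|x|^{(4-n+\delta_i)p_i+4}$, and factoring out $|x|^{4-n}$ while using $|x|\ge r_i=R_im_i^{-(p_i-1)/4}$ to trade $|x|^{-4}$ for $R_i^{-4}m_i^{p_i-1}$ reduces this to $CR_i^{-4}m_i^{-1+o(1)}|x|^{4-n}$, negligible. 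For (IV), a Harnack-type argument (via Proposition~\ref{prop:har} on an annulus away from $0$) together with the bound on $\pa B_1$ from Lemma~\ref{lem:upbound1} gives $u_i\le Cm_i^{-\lambda_i}$ there, producing a contribution $\le Cm_i^{-\lambda_i p_i}\le Cm_i^{-1}$ since $\lambda_i p_i\to (n+4)/(n-4)>1$. The $h_i$ term is bounded similarly by $Cm_i^{-\lambda_i}\le Cm_i^{-1}\le Cm_i^{-1}|x|^{4-n}$.

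The main technical subtlety is keeping the $m_i^{o(1)}$ factors from $\tau_i$ and $\delta_i$ uniformly bounded (so that $C$ does not depend on $i$). Lemma~\ref{lem:error} gives $\tau_i\log m_i\to 0$. For $\delta_i=O(R_i^{-4})$, we invoke the flexibility in the subsequence choice (noted before Remark~\ref{rem:blow}) to pick $R_i\to\infty$ slowly enough that $R_i^{-4}\log m_i\to 0$; then $m_i^{\delta_i}=1+o(1)$. Summing the four regional contributions (each bounded by $Cm_i^{-1}|x|^{4-n}$, with the last three negligible compared to the first) yields the stated sharp estimate.
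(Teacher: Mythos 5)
There is a genuine gap, and it sits exactly where your plan concedes a "technical subtlety": the outer contribution $\int_{B_3\setminus B_1}G_iK_iu_i^{p_i}$ and the $h_i$ term. First, your bound $u_i\le Cm_i^{-\lambda_i}$ on $1<|y|<3$ is unjustified: the hypotheses give the isolated blow up bound only in $B_2$ and Lemma \ref{lem:harnack}/Proposition \ref{prop:har} only reach annuli well inside $B_2$ (there could be a second blow up point in $B_3\setminus B_2$), so no pointwise control of $u_i$ there is available. The contribution can still be handled by the standard trick (it is dominated, via nonnegativity and kernel comparison from \eqref{G}, by $u_i$ at points of the annulus $\{1/2\le|x|\le 1\}$), but that, together with \eqref{H} for $h_i$, only yields the bound $Cm_i^{-\lambda_i}$, and $m_i^{-\lambda_i}=m_i^{-1}m_i^{\frac{(n-4)\tau_i}{4}}m_i^{\frac{\delta_i(p_i-1)}{4}}$. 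The factor $m_i^{\tau_i}$ is controlled by Lemma \ref{lem:error}, but $m_i^{\delta_i}$ with $\delta_i\sim R_i^{-4}$ is not, and your proposed remedy does not work: you would need $R_i^4\gtrsim \log m_i$, i.e.\ $R_i$ growing \emph{fast} relative to $m_i$ (your "slowly enough" is the wrong direction), whereas in the paper's framework the admissible growth of $R_i$ is produced by a soft diagonal argument in Proposition \ref{prop:blow up a bubble} and cannot be prescribed relative to $\log m_i$ (also $\delta_i$ cannot be taken $\ll R_i^{-4}$, since $\delta_i\gtrsim R_i^{-4}$ is forced by the absorption inequalities \eqref{eq:a1}--\eqref{eq:a2}). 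Incidentally, in your regions (II)--(III) the $m_i^{\delta_i}$ powers actually cancel identically once you trade $|x|^{-4+O(\tau_i)+O(\delta_i)}$ for $r_i^{-4+\cdots}$, so those pieces are fine without any choice of $R_i$; it is only the outer/$h_i$ terms that cannot be upgraded from $m_i^{-\lambda_i}$ to $m_i^{-1}$ by this direct bootstrap.

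This is precisely why the paper does not argue directly. Its proof first establishes the clean bound $\sup_{|e|=1}u_i(0)u_i(\rho e)\le C$ by contradiction: normalizing $\varphi_i=u_i(\rho e_i)^{-1}u_i$, one shows the limit has singular part $aG_\infty(\cdot,0)$ with $a>0$ (forced by the isolated \emph{simple} monotonicity), while Lemma \ref{lem:aux3} gives the clean estimate $\int_{B_{1/8}}u_i^{p_i}\le Cm_i^{-1}$, so $\int_{B_{1/8}}K_iu_i(\rho e_i)^{p_i-1}\varphi_i^{p_i}\le C/(u_i(\rho e_i)u_i(0))\to 0$, a contradiction. The bound is then spread to $\rho\le|x|\le 1$ by Proposition \ref{prop:har} and to $r_i\le|x|\le\rho$ by rescaling ($\tilde u_i(x)=|x_i|^{4/(p_i-1)}u_i(|x_i|x)$) and applying the same inequality, using Lemma \ref{lem:error} only through $m_i^{\tau_i}=1+o(1)$; no control of $m_i^{\delta_i}$ is ever needed. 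Your inner-range argument and region (I) are correct, but as it stands the proposal does not prove the proposition.
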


\begin{proof}  For $|x|\le r_i$, the proposition follows immediately from Proposition \ref{prop:blow up a bubble} and Lemma \ref{lem:error}.

Since $u_i(x)\le A_3 |x|^{-4/(p_i-1)}$ in $B_2$, it follows from Proposition \ref{prop:har} that for any $0<\va<1$ there exists a positive constant
$C(\va)$, depending only on $n, A_1, A_2, A_3$ and $\va$, such that
\be \label{eq:extra1}
\sup_{B_{3/2}\setminus B_\va} u_i \le C(\va)\inf_{B_{3/2}\setminus B_\va} u_i.
\ee

   For any given unit vector $e$,  Let $\varphi_i (x)=u_i(\rho e) ^{-1} u_i(x)$. Since $\varphi_i(\rho e)=1$,  by \eqref{eq:extra1}
\be \label{eq:a8}
\|\varphi_i\|_{L^\infty(B_{3/2}\setminus B_\va)} \le C(\va) \quad \mbox{for }0<\va<1.
\ee By \eqref{eq:s1}, $\varphi_i$ satisfies
\[
\varphi_i(x)=  \int_{B_{3/2}} G_i(x,y)K_i(y) u_i(\rho e)^{p_i-1} \varphi_i(y)^{p_i}\,\ud y+\tilde h_i(x),
\]
where $\tilde h_i(x)=\int_{B_3\setminus B_{3/2}} G_i(x,y)K_i(y) u_i(\rho e)^{p_i-1} \varphi_i(y)^{p_i}\,\ud y +u_i(\rho e) ^{-1} h_i(x)$. By \eqref{G} and \eqref{H},
 \be \label{eq-eq1}
 \sum_{k=1}^ 5 \|\nabla^k\tilde h_i\|_{L^\infty(B_1)} \le C \|\tilde h_i\|_{L^\infty(B_{5/4})} \le C \tilde h_i(\rho e) \le C.
 \ee
By Lemma \ref{lem:upbound1}, $u_i(\rho e) ^{p_i-1} \to 0 $ as $i\to \infty$. Applying Proposition \ref{prop:local estimates} to $\varphi_i$ and making use of \eqref{G}, \eqref{K}, \eqref{eq-eq1} and \eqref{eq:a8}, we have
 \[
 \|\varphi_i\|_{C^4(B_1\setminus B_{\va})}\le C(\va) \quad \mbox{for }0<\va<1.
 \]
 It follows,  after passing to a subsequence, that
 \[
 \tilde h_i \to h  \quad \mbox{in }C^4(B_1), \quad \varphi_i \to \varphi  \quad \mbox{in }C_{loc}^3(B_1\setminus \{0\})
 \]
for some $h\in C^5(B_1)$ and $\varphi\in C^4 (B_1\setminus \{0\})$. Since $ 0$ is an isolated simple blow up point of $\{u_i\}_{i=1}^\infty$, we have  $r^{\frac{n-4}{2}}\bar \varphi(r) \ge \rho^{\frac{n-4}{2}}\bar \varphi(\rho)$ for $0<r<\rho$, where $\bar \varphi(r) =\frac{1}{|\pa B_r|} \int_{\pa B_r}\varphi\,\ud S$. By \eqref{eq:extra1},
  \be \label{eq:-59}
  \bar \varphi (r) \ge \frac{1}{C} r^{-\frac{2-n}{2}}.
  \ee It follows that
\be\label{eq:forceconvergence}
 \int_{B_{3/2}} G_i(x,y)K_i(y) u_i(\rho e)^{p_i-1} \varphi_i(y)^{p_i}\,\ud y= \varphi_i(x)-\tilde h_i(x) \to \varphi(x) -h(x)
\ee
in $C^3_{loc}(B_1\setminus \{0\})$.
For $|x|>0$, let $0< r_i\le  \va<\frac{1}{2}|x|$. By \eqref{eq:a8}, Lemma \ref{lem:error} and Lemma \ref{lem:aux3}, we have
\begin{align*}
 & \int_{B_{1}} G_i(x,y)K_i(y) u_i(\rho e)^{p_i-1} \varphi_i(y)^{p_i}\,\ud y = \int_{B_{\va}} G_i(x,y)K_i(y) u_i(\rho e)^{p_i-1} \varphi_i(y)^{p_i}\,\ud y +o(1) \nonumber\\ &
 =G_i(x,0) u_i(\rho e)^{-1} \int_{B_{r_i}}   u_i(y)^{p_i}\,\ud y + O(m_i)\int_{B_\va\setminus B_{r_i}} u_i(y)^{p_i}\,\ud y  +O(m_i)\int_{B_{\va}}|y| u_i(y)^{p_i}\,\ud y\nonumber \\&
= G_i(x,0) \frac{\int_{\R^n}\left(\frac{1}{1+|y|^2}\right)^\frac{n+4}{2}\ud y+o(1)}{u_i(\rho e) u_i(0)} + o(1).
\end{align*}
By \eqref{G},  after passing to a subsequence, $G_i(x,0)$ converges in $C^5_{loc}(B_1\setminus \{0\})$ to some positive function $G_\infty(x,0)$  as $i\to \infty$.  By \eqref{eq:-59} and \eqref{eq-eq1}, $\varphi(\bar x)-h(\bar x)\ge \frac{1}{C}>0$ at some point $\bar x\in B_1$.   It follows from \eqref{eq:forceconvergence} that
\[
u_i(\rho e) u_i(0) \to \frac{1}{\varphi(\bar x)-h(\bar x)} G_\infty(\bar x,0) \int_{\R^n}\left(\frac{1}{1+|y|^2}\right)^\frac{n+4}{2}\ud y  \quad \mbox{as }i\to \infty.
\]
Therefore, we have
\be \label{eq:a7'}
u_i(\rho e) u_i(0)\le C \quad \forall~|e|=1.
\ee

It follows from Proposition \ref{prop:har} and \eqref{eq:a7'} that Proposition \ref{prop:upbound2} holds for $\rho\le |x| \le 1$.
To establish the inequality in the Proposition for $r_i\le |x|\le \rho$, it suffices to rescale and reduce the problem to the case $|x|=1$.
\end{proof}

\begin{prop}\label{prop:upbound3} Under the assumptions in Lemma \ref{lem:upbound1}, we have, with $k=1,\dots, 4$,
\[
|\nabla^k u_i(x)|\leq Cu_i^{-1}(0)|x|^{4-n-k},\quad \mbox{for all } r_i\le |x|\leq 1.
\]

\end{prop}

\begin{proof} Since $0$ is an isolated blow up point in $B_2$, by Proposition \ref{prop:har} we see that Proposition \ref{prop:upbound2} holds for all $|x|\le \frac32$.  For any $r_i\le |x|<1$, let $
\varphi_i(z)=\Big( \frac{|x|}{4}\Big)^{\frac{4}{p_i-1}} u_i(x+\frac{|x|}{4} z).$
By the equation of $u_i$, we have
\[
\varphi_i(z)=\int_{\{y:|x+\frac{|x|}{4} y|\le 3\}} \tilde G_{i}(z,y) \tilde K_i(y) \varphi_i(y)^{p_i-1} \varphi_i(y)\,\ud y+\tilde h_i(z),
\]
where $\tilde G_{i}(z,y)=(\frac{|x|}{4})^{n-4} G_i(x+\frac{|x|}{4} z, x+\frac{|x|}{4} y)$, $\tilde K_i(y)=K_i(x+\frac{|x|}{4} y)$, and $\tilde h_i(z) =( \frac{|x|}{4})^{\frac{4}{p_i-1}} h_i(x+\frac{|x|}{4} z)$. Since $0$ is an isolated blow up point of $u_i$, we have $\varphi_i(z)^{p_i-1}\le A_2^{p_i-1}$ for all $|z|\le 1$. Since $\varphi_i, \tilde G_i, \tilde K_i$ and $\tilde h_i$ are nonnegative,  by Proposition \ref{prop:local estimates} we have
\[
|\nabla^k \varphi_i(0)|\le C(\|\varphi_i\|_{L^\infty(B_1)} +\| \tilde h_i\|_{C^4(B_1)}).
\]
 This gives
 \begin{align*}
 (\frac{|x|}{4})^k |\nabla^k u_i(x)| \le C\|u_i\|_{L^\infty(B_{\frac{|x|}{4}}(x))}+Cm_i^{-1}
 \le C u_i(0)^{-1}|x|^{4-n}.
 \end{align*}
\end{proof}

\begin{rem} \label{rem:convergence} From the proof of Proposition \ref{prop:upbound2}, it is easy to see that
\[u_i(0) u_i(x)\to aG_\infty(x,0)  +h(x) \quad \mbox{in }C^3_{loc}(B_1\setminus \{0\}),
\]
where \be\label{eq:number a}
a=\int_{\R^n}(1+|y|^2)^{-\frac{n+4}{2}}\,\ud y,
\ee and $h\in C^5(B_1)$ satisfies \eqref{H} with $A_2$ replaced by some $C$ depending only on   $n, A_1, A_2,A_3$ and $\rho$.
\end{rem}

\section{Expansions of blow up solutions of integral equations}
\label{section:bubble-expansion}

 In this section, we are interested in stronger estimates than that in Proposition \ref{prop:upbound2}. To make statements closer to the main goal of the paper, we restrict our attention to a special class of functions $K_i$.  Namely, given $p_i, G_i$, and $ h_i$ satisfying \eqref{p}, \eqref{G} and \eqref{H}  respectively, $\kappa_i$ satisfying \eqref{K} with $K_i$ replaced by $\kappa_i^{\tau_i}$,  let $0\le u_i\in C^4(B_3)$
be a solution of
\be \label{eq:s1'}
u_i(x)=\int_{B_3 }G_i(x,y) \kappa_i(y)^{\tau_i} u_i^{p_i}(y)\,\ud y +h_i(x) \quad \mbox{in  } B_3.
\ee
We also assume that
\be \label{eq:56}
\nabla \kappa_i(0)=0.
\ee Suppose that $0$ is an isolated simple blow up point of $\{u_i\}$ with $\rho=1$, i.e.,
\be\label{eq:A_3}
u_i(x)\leq A_3 |x|^{-4/(p_i-1)}\quad \mbox{for all } x\in B_2.
\ee
and $
r^{4/(p_i-1)} \bar u_i(r)$ has precisely one critical point in $(0,1)$.
Let us first introduce a non-degeneracy result.

\begin{lem}\label{lem:non-degeneracy} For $0<\sigma<n/2$,  let $v\in L_{loc}^\infty(\R^n)$, $v(x)\le C (1+|x|)^t$ for some $t<2\sigma$ and $C>0$,  be a solution of
\[
v(x)=2^{2\sigma} \frac{\Gamma(1+\frac{n}{2}+\sigma)}{\Gamma(1+\frac{n}{2}-\sigma)}N_\sigma \int_{\R^n} \frac{U_{\sigma,1}(y)^{\frac{4\sigma}{n-2\sigma}}v(y)}{|x-y|^{n-2\sigma}}\,\ud y,
\]
where $N_\sigma:= \frac{\Gamma(\frac{n+2\sigma}{2})}{2^{2\sigma}\pi^{\frac{n}{2}} \Gamma(\sigma)}$ and $U_{\sigma,1}=(1+|x|^2)^{-\frac{n-2\sigma}{2}}$.
Then for some constants $a_0,\dots,a_n$,
\[
v(z)=a_0 \left(\frac{n-4}{2}U_{\sigma,1}(z)+z\cdot \nabla U_{\sigma,1}(z)\right)+\sum_{j=1}^n a_j \pa_j U_{\sigma,1}(z).
\]

\end{lem}

\begin{proof} When $0<\sigma<1$, the lemma was proved by D\'avila-del Pino-Sire \cite{DDS}.  When $\sigma\ge 1$, the proof is similar.  Since $v(x)\le C (1+|x|)^t$ for some $t<2\sigma$ and $C>0$, we have, using the integral equation, $|v(x)|\le C (1+|x|)^{2\sigma-n}.$ Let $F:\R^n\to \mathbb{S}^n\setminus\{N\}$, $
F(x)=(\frac{2x}{1+|x|^2}, \frac{1-|x|^2}{1+|x|^2}) $
denote the inverse of the stereographic projection and $ h(F(x)):=v(x) J_{F}(x)^{-\frac{n-2\sigma}{2n}} $, where $J_F=(\frac{2}{1+|x|^2})^{n}$ is the Jacobian determinant of $F$ and $N$ is the north pole. It follows  that $h\in L^\infty(\Sn)$ and satisfies
 \[
L_\sigma h(\xi):= N_\sigma \int_{\Sn} |\xi-\eta|^{2\sigma-n} h(\eta)\,\ud \eta = \frac{\Gamma(1+\frac{n}{2}-\sigma)}{\Gamma(1+\frac{n}{2}+\sigma)} h(\xi).
 \]
 By the regularity theory for Riesz potentials, $h\in C^\infty(\Sn)$.
 By Beckner \cite{Be}, $L_\sigma Y^{(k)}= \frac{\Gamma(k+\frac{n}{2}-\sigma)}{\Gamma(k+\frac{n}{2}+\sigma)} Y^{(k)}$ for every spherical harmonics $Y^{(k)}$ of degree $k$. It follows that $h$ has to be a linear combination of the spherical harmonics of degree one on $\Sn$.  Transforming $h$ back to $\R^n$, we complete the proof.
\end{proof}

In view of Corollary \ref{cor:GM2.8},  we assume  in this and next section that
\be \label{eq:IE-cond}
U_{\lda}(x)=\int_{B_3} G_i(x,y)\{U_\lda(y)^{\frac{n+4}{n-4}}+c_{\lda,i}'(y) U_{\lda}(y)\}\,\ud y+c_{\lda,i}''(x) \quad \forall ~\lda\ge 1 ,~x\in B_3,
\ee
where $c_{\lda,i}',c_{\lda,i}''\in C^5(B_3)$ satisfy
\be \label{eq:non-flat}
\Theta_i:=\sum_{k=0}^5\|\lda^{-k}\nabla^kc_{\lda,i}'\|_{L^\infty(B_2)}\le A_2
,\ee and $\|c_{\lda,i}''\|_{C^5(B_2)}\le A_2 \lda^{\frac{4-n}{2}}$, respectively.

\begin{lem}
\label{lem:expansion-a}
Let $0\le u_i\in C^4(B_3)$ be a solution of \eqref{eq:s1'} and $0$ be an isolated simple blow up point of $\{u_i\}$ with some constant
$\rho$, say $\rho=1$.  Suppose \eqref{eq:IE-cond} holds and let $\Theta_i$ be defined in \eqref{eq:non-flat}. Then we have
\[
|\varphi_i(z)-U_1(z)| \le C \begin{cases}
\max\{\tau_i, m_i^{-2}\},& \quad \mbox{if } 5\le n\le 7, \\
\max\{ \tau_i, \Theta_i m_i^{-2}\log m_i, m_i^{-2}\},& \quad \mbox{if }n=8,\\
\max\{ \tau_i, \Theta_i m_i^{-\frac{8}{n-4}}, m_i^{-2}\},& \quad\mbox{if } n\ge 9,
\end{cases}
 \quad \forall~ |z|\le  m_i^{\frac{p_i-1}{4}},
\]where $\varphi_i(z)=\frac{1}{m_i}u_i(m_i^{-\frac{p_i-1}{4}}z)$, $m_i=u_i(0)$, and $C>0$ depends only on $n,A_1,A_2$ and $A_3$.
\end{lem}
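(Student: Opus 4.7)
The plan is to compare the rescaled solution $\varphi_i$ with the standard bubble $U_1$ by deriving and inverting an integral equation for the difference $w_i := \varphi_i - U_1$. By Proposition \ref{prop:blow up a bubble} we already know $w_i \to 0$ locally in $C^3$; the task is to quantify the rate uniformly on $|z| \le R_i := m_i^{(p_i-1)/4}$.

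First I would rescale \eqref{eq:s1'} by $x = m_i^{-(p_i-1)/4}z$ to obtain
\[
\varphi_i(z) = \int_{|\eta|<3R_i} \tilde G_i(z,\eta)\,\tilde\kappa_i(\eta)^{\tau_i}\,\varphi_i^{p_i}(\eta)\,d\eta + \tilde h_i(z),
\]
where $\tilde G_i, \tilde\kappa_i, \tilde h_i$ are the natural rescalings. Applying \eqref{eq:IE-cond} at the scale $\lambda_i := m_i^{2/(n-4)}$, chosen so that $U_{\lambda_i}(m_i^{-(p_i-1)/4}z) = (1+o(1)) m_i U_1(z)$ via Lemma \ref{lem:error}, yields a matching identity for $U_1$. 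Subtracting and expanding $\varphi_i^{p_i} - U_1^{p_*} = p_* U_1^{p_*-1} w_i + Q_i + R_i^{\mathrm{sub}}$, with $p_* = (n+4)/(n-4)$ and $Q_i$ quadratic in $w_i$, gives
\[
w_i(z) = p_* c_n \int_{\R^n} |z-\eta|^{4-n}\,U_1^{8/(n-4)}(\eta)\,w_i(\eta)\,d\eta + \mathcal{E}_i(z),
\]
with $\mathcal{E}_i$ bundling all remainder terms.

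Next I would bound $\mathcal{E}_i$ by $C E_i^*$, where $E_i^*$ denotes the maximum on the right-hand side of the claimed inequality, by splitting into three contributions. The flat-kernel deviation $\tilde G_i - c_n|z-\eta|^{4-n}$ together with the rescaled $\tilde h_i$ and the $c''_{\lambda_i,i}$ remainder produce $O(m_i^{-2})$ through the $|z|^2$, $|\eta|^2$ and $|z-\eta|^{6-n}$ terms; the subcritical power $\varphi_i^{p_i}-\varphi_i^{p_*}$ and the factor $\kappa_i^{\tau_i}-1$ both produce $O(\tau_i)$; the $c'_{\lambda_i,i}\,U_{\lambda_i}$ term from \eqref{eq:IE-cond} produces $O(\Theta_i m_i^{-8/(n-4)})$ for $n\ge 9$, with a borderline logarithm $O(\Theta_i m_i^{-2}\log m_i)$ at $n=8$ coming from the marginal integrability of $\int U_{\lambda_i}(y)|y|^{-(n-4)}\,dy$. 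For $5\le n \le 7$, $8/(n-4)>2$ so the $\Theta_i$-term is absorbed into $O(m_i^{-2})$, giving the simpler form. These estimates rely on Proposition \ref{prop:upbound2}, Corollary \ref{cor:energy}, the expansions \eqref{G}, \eqref{eq:IE-cond}, and the Harnack-type control \eqref{H} on $h_i$.

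Finally, I would invert the linearized operator by contradiction. Suppose $\|w_i\|_{L^\infty(B_{R_i})}/E_i^* \to \infty$, and set $\bar w_i := w_i/\|w_i\|_{L^\infty(B_{R_i})}$, so the rescaled equation becomes
\[
\bar w_i(z) = p_* c_n \int_{\R^n} |z-\eta|^{4-n}\,U_1^{8/(n-4)}(\eta)\,\bar w_i(\eta)\,d\eta + o(1).
\]
Using the decay $|\varphi_i(z)|, U_1(z) \le C(1+|z|)^{4-n}$ from Proposition \ref{prop:upbound2} and a weighted bootstrap that pins the supremum of $|\bar w_i|$ to a uniformly bounded point, I extract a subsequential limit $w_\infty$ on $\R^n$, bounded with $\|w_\infty\|_\infty=1$, solving the limit integral equation. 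By Lemma \ref{lem:non-degeneracy}, $w_\infty = a_0\bigl(\tfrac{n-4}{2}U_1 + z\cdot\nabla U_1\bigr) + \sum_j a_j\partial_j U_1$. The normalizations $\varphi_i(0)=1=U_1(0)$ and $\nabla\varphi_i(0)=0=\nabla U_1(0)$ give $\bar w_i(0)=0$ and $\nabla\bar w_i(0)=0$, and since $\tfrac{n-4}{2}U_1(0)\neq 0$ and $\partial_j\partial_k U_1(0)=-(n-4)\delta_{jk}$, all coefficients $a_l$ vanish, forcing $w_\infty\equiv 0$, a contradiction. The main obstacle will be the weighted-decay bootstrap that pins the supremum of $\bar w_i$ to a bounded region; without it the normalized profile could escape to infinity, where $U_1^{8/(n-4)}$ is too small to constrain the limit.
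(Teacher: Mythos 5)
Your proposal follows essentially the same route as the paper's proof: rescale to $\varphi_i$, use \eqref{eq:IE-cond} to produce a comparison identity for the bubble with the same kernel $G_{i,\ell_i^{-1}}$, subtract, book-keep the error into the same three sources ($m_i^{-2}$ from the kernel deviation, $c''$ and the tail; $\tau_i$ from the subcritical exponent and $\kappa_i^{\tau_i}$; $\Theta_i m_i^{-8/(n-4)}$ from $c'$, with the borderline $\log$ at $n=8$), and then run a normalization/compactness argument that pins the maximum of the normalized difference to a bounded region, passes to a limit solving the linearized integral equation, and kills the limit by Lemma \ref{lem:non-degeneracy} together with $\varphi_i(0)=U_1(0)$, $\nabla\varphi_i(0)=\nabla U_1(0)=0$. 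This is exactly the paper's scheme (there the normalized difference is $v_i=(\varphi_i-U_1)/\Lambda_i$, the decay bound \eqref{eq:b-expansion-1} does the pinning, and the exact difference quotient $b_i=\tilde\kappa_i^{\tau_i}\frac{\varphi_i^{p_i}-U_1^{p_i}}{\varphi_i-U_1}$ replaces your linearization-plus-quadratic-remainder, which incidentally also sidesteps the wrinkle that $p_*<2$ for $n>12$).

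There is, however, one quantitative misstep in the matching step as you wrote it. You apply \eqref{eq:IE-cond} at the scale $\lambda_i=m_i^{2/(n-4)}$ and invoke ``$U_{\lambda_i}(m_i^{-(p_i-1)/4}z)=(1+o(1))\,m_iU_1(z)$''. Since $\lambda_i/\ell_i=m_i^{\tau_i/4}$ with $\ell_i=m_i^{(p_i-1)/4}$, the actual discrepancy is of size $\tau_i\log m_i$ (uniformly, relative to $U_1$), and an unquantified $o(1)$ is in any case useless in a lemma whose whole content is a rate. The quantity $\tau_i\log m_i$ is \emph{not} dominated by $\max\{\tau_i,\Theta_i\alpha_i,m_i^{-2}\}$ (e.g.\ if $\tau_i\asymp m_i^{-2}$ and $5\le n\le 7$), so with this choice your error term $\mathcal{E}_i$ is not bounded by $E_i^*$ and the stated inequality would only come out with an extra $\log m_i$ loss. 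The fix is the one the paper uses: take $\lambda=\ell_i$ in \eqref{eq:IE-cond}; then $U_{\ell_i}(\ell_i^{-1}z)=\ell_i^{(n-4)/2}U_1(z)$ exactly, and after dividing by $\ell_i^{(n-4)/2}$ one gets an \emph{exact} identity for $U_1$ with the same rescaled kernel as in the $\varphi_i$ equation (the factor $\ell_i^{-4}m_i^{p_i-1}=1$), so no scale-mismatch error ever appears; the relations $m_i^{\tau_i}=1+o(1)$ from Lemma \ref{lem:error} are then only used on terms already of admissible size. With that correction, the rest of your outline matches the paper's proof.
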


\begin{proof}
For brevity, set $\ell_i= m_i^{\frac{p_i-1}{4}}$. By the equation satisfied by $u_i$, we have
\be \label{eq:varphi}
\varphi_i(z)=\int_{B_{\ell_i}} G_{i,\ell_i^{-1}}(z,y)\tilde \kappa_i(y)^{\tau_i}\varphi_i(y)^{p_i}\,\ud y +\bar h_i(z),
\ee
where $G_{i,\ell_i^{-1}}(z,y)=\ell_i^{4-n}G_i(\ell_i^{-1}x,\ell_i^{-1} y)$, $\tilde \kappa_i(z)=\kappa_i(\ell_i^{-1} z)$, and
$
\bar h_i(z)=m_i^{-1}\tilde h_i(\ell_i^{-1}z)
$ with
\[
\tilde h_i (x)= \int_{B_3\setminus B_1} G_i(x,y) \kappa_i(y)^{\tau_i} u_i(y)^{p_i} \,\ud y+h_i(x).
\]
Since $0$ is an isolated simple blow up point of $u_i$, by Proposition \ref{prop:upbound2} we have
\be \label{eq:exp-1}
u_i(x) \le C m_i^{-1}|x|^{4-n} \quad \mbox{for }|x|<1.
\ee It follows that $\tilde h_i(x)\le C m_i^{-1}$ for $x\in B_1$ and $\bar h_i(z) \le Cm_i^{-2}$ for $z\in B_{\ell_i}$.

Notice that $U_{\ell_i}(x)\le Cm_i^{-1}$ for $1\le |x|\le 3$.  Let $z=\ell_i x$. By \eqref{eq:IE-cond} with $\lda=\ell_i$ we have for $|z|\le \ell_i$
\begin{align}
U_1(z)&=\int_{B_{\ell_i}} G_{i,\ell_i^{-1}}(z,y)(U_1(y)^{\frac{n+4}{n-4}}+m_i^{-\frac{8}{n-4}}c'_{\ell_i,i}(\ell_i^{-1}y)U_1(y))\,\ud y+\mathcal{O}(m_{i}^{-2})\nonumber \\&
=\int_{B_{\ell_i}} G_{i,\ell_i^{-1}}(z,y) (\tilde \kappa_i(y)^{\tau_i}U_1(y)^{p_i}+T_i(y))\,\ud y+\mathcal{O}(m_i^{-2}),
\label{eq:U1}
\end{align}
where we used $m_i^{\tau_i}=1+o(1)$, and
\be \label{eq:T-i}
T_i(y):=U_1(y)^{\frac{n+4}{n-4}}-\tilde \kappa_i(y)^{\tau_i}U_1(y)^{p_i} +m_i^{-\frac{8}{n-4}}c'_{\ell_i,i}(\ell_i^{-1} y)U_1(y).
\ee
\emph{Here and throughout this section, $\mathcal{O}(m_i^{-2})$ denotes some function $f_i$ satisfying $\|\nabla ^k f_i\|_{B_{(1-\va)\ell_i}} \le C(\va) m_i^{-2-\frac{2k}{n-4}}$ for small $\va>0$ and $k=0,\dots, 5$.}

In the following, we adapt some arguments from Marques \cite{Marques} for the Yamabe equation; see also the proof of Proposition 2.2 of Li-Zhang \cite{Li-Zhang05}.
 Let
\[
\Lda_i=\max_{|z|\le \ell_i} |\varphi_i-U_1|.
\]
By \eqref{eq:exp-1}, for any $0<\va<1$ and $\va \ell_i\le |z|\le \ell_i$, we have $|\varphi_i (z)-U_1(z)|\le C(\va) m_i^{-2}$, where we used $m_i^{\tau_i}=1+o(1)$. Hence, we may assume that $\Lda_i$ is achieved at some point $|z_i|\le \frac12 \ell_i$, otherwise the proof is finished.
Set
\[
v_i(z)= \frac{1}{\Lda_i}(\varphi_i(z)-U_1(z)).
\]
It follows from \eqref{eq:varphi} and \eqref{eq:U1} that $v_i$ satisfies
\be \label{eq:newscale1}
v_i(z)= \int_{B_{\ell_i}} G_{i,\ell_i^{-1}}(z,y) (b_i(y) v_i(y)+\frac{1}{\Lda_i}T_i(y))\,\ud y+\frac{1}{\Lda_i}\mathcal{O}(m_i^{-2}),
\ee
where
\be \label{eq:b-i}
b_i=\tilde \kappa_i^{\tau_i}\frac{\varphi_i^{p_i}-U_1^{p_i}}{\varphi_i-U_1}.
\ee
Since
\[
G_{i,\ell_i^{-1}}(z,y)\le A_1|z-y|^{4-n}
\] and
\be \label{eq:Ti-estimate}
|T_i(y)|\le C\tau_i (|\log  U_i|+|\log \tilde \kappa_i|)(1+|y|)^{-p_i(n-4)}+\Theta_i m_i^{-\frac{8}{n-4}} (1+|y|)^{4-n},
\ee we obtain
\[
\int_{B_{\ell_i}} G_{i,\ell_i^{-1}}(z,y) |T_i(y)|\,\ud y\le C (\tau_i+\Theta_i \al_i)\quad \mbox{for }|z|\le \frac{\ell_i}{2},
\]
where
\be \label{eq:ali}
\al_i= \begin{cases}
m_i^{-2},& \quad \mbox{if }5\le n\le 7, \\
m_i^{-2}\log m_i,& \quad \mbox{if }n=8,\\
m_i^{-\frac{8}{n-4}},& \quad \mbox{if }n\ge 9.
\end{cases}
\ee
Since $\kappa_i(x)$ is bounded and $\varphi_i\le C U_1$, we see that
 \be\label{eq:bi-estimate}
 |b_i(y)|\le CU_1(y)^{p_i-1}\le C (1+|y|)^{-7.5}, \quad y\in B_{\ell_i}.
 \ee
Noticing that $\|v_i\|_{L^\infty(B_{\ell_i})}\le 1$,    we have
\[
\int_{B_{\ell_i}} G_{i,\ell_i^{-1}}(z,y) |b_i(y) v_i(y)|\,\ud y\le C(1+|z|)^{-\min\{n-4,3.5\}}.
\] Hence, we obtain
\be\label{eq:b-expansion-1}
v_i (z) \le C((1+|z|)^{-\min\{n-4,3.5\}}+\frac{1}{\Lda_i}(\tau_i+\Theta_i\al_i+m_i^{-2})) \quad \mbox{for }|z|\le \frac{\ell_i}{2}.
\ee

Suppose by contradiction that $\frac{1}{\Lda_i}\max\{\tau_i,\Theta_i\al_i,m_i^{-2} \}\to 0$ as $i \to \infty$. Since $v(z_i)=1$, by \eqref{eq:b-expansion-1} we see that $
|z_i|\le C.$
Differentiating the integral equation \eqref{eq:newscale1} up to three times, together with \eqref{eq:Ti-estimate} and \eqref{eq:bi-estimate}, we see that the $C^3$ norm of $v_i$ on any compact set  is uniformly bounded. By the Arzel\`a-Ascoli theorem let $v:=\lim_{i\to \infty}v_i$ after passing to a subsequence. Using Lebesgue's dominated convergence theorem, we obtain
\[
v(z)=c_{n}\int_{\R^n}\frac{U_1(y)^{\frac{8}{n-4}} v(y)}{|z-y|^{n-4}}\,\ud y.
\]
It follows from Lemma \ref{lem:non-degeneracy} that
\[
v(z)=a_0 (\frac{n-4}{2}U_1(z)+z\cdot \nabla U_1(z))+\sum_{j=1}^n a_j \pa_j U_1(z),
\]
where $a_0,\dots,a_n$ are constants.
Since $v(0)=0$ and $\nabla v(0)=0$, $v$ has to be zero. However, $v(z_i)=1$. We obtain a contradiction. Therefore, $\Lda_i\le C (\tau_i+\al_i)$ and the proof is completed.
\end{proof}

\begin{lem}\label{lem:expansion-b} Under the same assumptions as in Lemma \ref{lem:expansion-a}, we have
\[
\tau_i \le C \begin{cases}
m_i^{-2},& \quad \mbox{if }5\le n\le 7, \\
\max\{ \Theta_i m_i^{-2}\log m_i,m_i^{-2}\},& \quad \mbox{if }n=8,\\
\max\{ \Theta_i  m_i^{-\frac{8}{n-4}},m_i^{-2}\},& \quad \mbox{if }n\ge 9.
\end{cases}
\]
\end{lem}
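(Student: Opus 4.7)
The strategy is to apply the integral Pohozaev identity of Proposition \ref{prop:pohozaev} to equation \eqref{eq:s1'} on a small fixed ball $B_\delta$, which yields a $\tau_i$-linear term on the left, and then to bound the resulting error by exploiting the sharp expansion $\varphi_i\approx U_1$ from Lemma \ref{lem:expansion-a} together with the model identity \eqref{eq:IE-cond}.

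First I would rewrite \eqref{eq:s1'} restricted to $B_\delta$ as
\[
u_i(x)=c_n\int_{B_\delta}|x-y|^{4-n}\kappa_i(y)^{\tau_i}u_i(y)^{p_i}\,\ud y+H_i(x),
\]
where $H_i(x)$ collects (i) the correction $G_i(x,y)-c_n|x-y|^{4-n}$ from the expansion in \eqref{G}, (ii) the tail contribution over $B_3\setminus B_\delta$, and (iii) $h_i(x)$. Applying Proposition \ref{prop:pohozaev} with $K=c_n\kappa_i^{\tau_i}$ and $h_R=H_i$, the coefficient $\frac{n-4}{2}-\frac{n}{p_i+1}$ equals $\frac{(n-4)^2}{4n}\tau_i(1+o(1))$. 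Via the rescaling $\varphi_i(z)=m_i^{-1}u_i(m_i^{-(p_i-1)/4}z)$, Lemma \ref{lem:expansion-a} implies $\int_{B_\delta}\kappa_i^{\tau_i}u_i^{p_i+1}\to c\int_{\R^n}U_1^{\frac{2n}{n-4}}>0$, so the main term on the left is bounded below by $c\tau_i$. The auxiliary term $\int x\cdot\nabla K\,u_i^{p_i+1}$ is $O(\tau_i m_i^{-4/(n-4)})$ because $\nabla\kappa_i(0)=0$ forces $|x\cdot\nabla K|\le C\tau_i|x|^2$ and Corollary \ref{cor:energy} bounds the integral of $|x|^2 u_i^{p_i+1}$.

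For the right-hand side, the boundary term at $|x|=\delta$ is $O(m_i^{-2n/(n-4)})=o(m_i^{-2})$ by Proposition \ref{prop:upbound2}. The tail and $h_i$ pieces of $H_i$ satisfy $|H_i|=O(m_i^{-1})$ pointwise on $B_\delta$ (with a matching derivative bound via \eqref{H}), and paired with $\int_{B_\delta}u_i^{p_i}=O(m_i^{-1})$ from Lemma \ref{lem:aux3}, they contribute terms of order $m_i^{-2}$. What remains is the contribution of the Green-function correction integral $\int_{B_\delta}(G_i(x,y)-c_n|x-y|^{4-n})\kappa_i^{\tau_i}u_i^{p_i}(y)\,\ud y$ and its $x\cdot\nabla$ analogue.

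The main obstacle is extracting the sharp bound $O(\Theta_i\alpha_i+m_i^{-2})$ here, since direct use of \eqref{G} together with Corollary \ref{cor:energy} only gives $O(m_i^{-4/(n-4)})$, which is too weak for $n\ge 7$. To obtain the sharp rate, I would substitute $\varphi_i=U_1+(\varphi_i-U_1)$ and pass to rescaled coordinates. The leading-order piece generated by $U_1$ alone, namely an integral against $U_1(\ell_i\cdot)^{p_i}$ with the same Green-function correction kernel, is governed by the model identity \eqref{eq:IE-cond} applied with $\lambda=\ell_i$: the integral equation that $U_{\ell_i}$ satisfies with the actual kernel $G_i$ produces an explicit residual of size controlled by $c'_{\ell_i,i}U_{\ell_i}$, whose Pohozaev-type pairing contributes $O(\Theta_i\alpha_i)$ with $\alpha_i$ as in \eqref{eq:ali}. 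The difference $\varphi_i-U_1$ contributes an additional error of size $\max\{\tau_i,\Theta_i\alpha_i,m_i^{-2}\}$ thanks to Lemma \ref{lem:expansion-a}. Putting everything together, the right-hand side is bounded by $C(\Theta_i\alpha_i+m_i^{-2})+o(1)\tau_i$, and absorbing the $o(1)\tau_i$ term into the lower bound on the left yields the claimed inequality.
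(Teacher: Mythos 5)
Your argument is correct in outline, but it follows a genuinely different route from the paper's. The paper proves this lemma by contradiction at the rescaled level: assuming $\tau_i^{-1}\max\{\Theta_i\alpha_i,m_i^{-2}\}\to 0$, it sets $v_i=(\varphi_i-U_1)/\tau_i$, pairs the linearized integral equation for $v_i$ against $b_i\phi$ with $\phi=\frac{n-4}{2}U_1+z\cdot\nabla U_1$ (the dilation mode), uses the symmetry of $G_{i,\ell_i^{-1}}$, and derives a contradiction with the explicit positivity of $\int_{\R^n}\phi\,(\log U_1)\,U_1^{\frac{n+4}{n-4}}\,\ud z$. You instead run a direct Pohozaev argument at unit scale, essentially a sharpened version of the paper's Lemma \ref{lem:error}: the subcritical coefficient produces the $\tau_i$ term, and the Green-kernel correction --- whose naive estimate via Corollary \ref{cor:energy} and Lemma \ref{lem:aux1} is only $O(m_i^{-4/(n-4)})$, as you note --- is tamed by splitting $\kappa_i^{\tau_i}u_i^{p_i}$ into $U_{\ell_i}^{\frac{n+4}{n-4}}$ plus a difference controlled by Lemma \ref{lem:expansion-a}, and converting the bubble part through the exact model identity \eqref{eq:IE-cond} into the residuals $c'_{\ell_i,i}U_{\ell_i}$ and $c''_{\ell_i,i}$; this is precisely the mechanism that generates the $\Theta_i\alpha_i$ and $m_i^{-2}$ rates in the paper's treatment of $T_i$, and your orders of magnitude reproduce $\alpha_i$ as in \eqref{eq:ali}. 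Both proofs exploit the same non-degeneracy in the dilation direction; yours avoids the compactness/contradiction step and the symmetry of $G_i$, at the price of redoing the kernel-correction bookkeeping at unit scale for both pairings $\int K u_i^{p_i}h_R$ and $\int x\cdot\nabla h_R\,K u_i^{p_i}$ (so the derivative bounds on $c'_{\ell_i,i},c''_{\ell_i,i}$ in \eqref{eq:non-flat} are genuinely needed), and of verifying that every $\tau_i$-proportional error --- from $\varphi_i-U_1$, from $U_1^{p_i}-U_1^{\frac{n+4}{n-4}}$, and from $\kappa_i^{\tau_i}-1$ --- enters only through the small correction kernel and hence carries an $o(1)$ factor that can be absorbed into the left-hand side; this absorption, which you correctly flag, is the one step that must be written out in full. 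A minor point: the coefficient $\frac{n-4}{2}-\frac{n}{p_i+1}$ equals $-\frac{(n-4)^2}{4n}\tau_i(1+o(1))$, i.e., it is negative, which is immaterial since only its magnitude enters your estimate.
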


\begin{proof} The proof is also by contradiction. Recall the definition of $\al_i$ in \eqref{eq:ali}. Suppose by contradiction that $\frac{1}{\tau_i}\max\{\Theta_i\al_i,m_i^{-2} \}\to 0$ as $i\to \infty$.
Set
\[
v_i(z)=\frac{\varphi_i(z)-U_1(z)}{\tau_i}.
\]
It follows from Lemma \ref{lem:expansion-a} that $|v_i(z)|\le C$ in $B_{\ell_i}$, where $\ell_i= m_i^{\frac{p_i-1}{4}}$. As \eqref{eq:newscale1}, we have
\be \label{eq:newscale2}
v_i(z)= \int_{B_{\ell_i}} G_{i,\ell_i^{-1}}(z,y) (b_i(y) v_i(y)+\frac{1}{\tau_i}T_i(y))\,\ud y+\frac{1}{\tau_i}\mathcal{O}(m_i^{-2}),
\ee
where $b_i$ and $T_i$ are given by \eqref{eq:b-i} and \eqref{eq:T-i}, respectively.

By the estimates \eqref{eq:bi-estimate} and \eqref{eq:Ti-estimate} for $b_i$ and $T_i$ respectively, we conclude from the integral equation that $\|v_i\|_{C^3}$ is uniformly bounded over any compact set. It follows that $v_i\to v$ in $C^2_{loc}(\R^n)$ for some $v\in C^3(\R^n)$.

Multiplying both sides of \eqref{eq:newscale2} by $b_i(z)\phi(z)$, where $\phi(z)=\frac{n-4}{2} U_1(z)+z\cdot \nabla U_1(z)$, and integrating over $B_{\ell_i}$, we have, using the symmetry of $G_{i,\ell_i^{-1}}$ in $y$ and $z$,
\begin{align*}
&\int_{B_{\ell_i}}b_i(z)v_i(z)\left(\phi(z)-\int_{B_{\ell_i}} G_{i,\ell_i^{-1}}(z,y) b_i(y) \phi(y)\,\ud y\right)\,\ud z\\&
=\frac{1}{\tau_i}\int_{B_{\ell_i}}T_i(z)\int_{B_{\ell_i}} G_{i,\ell_i^{-1}}(z,y) b_i(y) \phi(y)\,\ud y\,\ud z+\frac{1}{\tau_i}\mathcal{O}(m_i^{-2})\int_{B_{\ell_i}} b_i(z)\phi(z)\,\ud z.
\end{align*}
As $i\to \infty$, we have
\[
\int_{B_{\ell_i}} G_{i,\ell_i^{-1}}(z,y) b_i(y) \phi(y)\,\ud y \to c_{n}\int_{\R^n} \frac{U_1(y)^{\frac{8}{n-4}}\phi(y)}{|z-y|^{n-4}}\,\ud z=\phi(z),
\]
\[
\frac{1}{\tau_i}\mathcal{O}(m_i^{-2})\int_{B_{\ell_i}} b_i(z)\phi(z)\,\ud z \to 0\mbox{ by the contradiction hypothesis},
\]
and
\[
\frac{T_i(z)}{\tau_i}  \to (\log U_1(z))U_1(z)^{\frac{n+4}{n-4}}.
\]
Hence, by Lebesgue's dominated convergence theorem we obtain
\[
\begin{split}
\lim_{i\to \infty}&\frac{1}{\tau_i}\int_{B_{\ell_i}}T_i(z)\int_{B_{\ell_i}} G_{i,\ell_i^{-1}}(z,y) b_i(y) \phi(y)\,\ud y\,\ud z=\int_{\R^n} \phi(z) (\log U_1(z))U_1(z)^{\frac{n+4}{n-4}}\ud z=0.
\end{split}
\]
This is impossible, because
\begin{align*}
\int_{\R^n} \phi(z) (\log U_1(z))U_1(z)^{\frac{n+4}{n-4}}\ud z&=\frac{(n-4)^2|\mathbb{S}^{n-1}|}{4} \int_{0}^\infty  \frac{(r^2-1)r^{n-1}}{(1+r^2)^{n+1}} \log (1+r^2)\,\ud r\\&
=\frac{(n-4)^2|\mathbb{S}^{n-1}|}{2} \int_{1}^\infty  \frac{(r^2-1)r^{n-1}}{(1+r^2)^{n+1}} \log r\,\ud r>0,
\end{align*} where we used
\[
\int_{0}^1\frac{(r^2-1)r^{n-1}}{(1+r^2)^{n+1}} \log (1+r^2)\,\ud r= -\int_{1}^\infty \frac{(s^2-1)s^{n-1}}{(1+s^2)^{n+1}} (\log (1+s^2)-\log s^2)\,\ud s
\]
by the change of variable $r=\frac{1}{s}$.

We obtain a contradiction and thus $\tau_i\le \al_i$. Therefore, the lemma is proved.
\end{proof}

\begin{prop}\label{prop:expansion} Under the hypotheses in Lemma \ref{lem:expansion-a},  we have
\[
|\varphi_i (z)-U_1(z)| \le C \begin{cases}
m_i^{-2},& \quad \mbox{if } 5\le n\le 7, \\
\max\{ \Theta_i m_i^{-2}\log m_i,m_i^{-2}\},& \quad \mbox{if }n=8,\\
\max\{ \Theta_i  m_i^{-\frac{8}{n-4}},m_i^{-2}\},& \quad\mbox{if } n\ge 9,
\end{cases}
 \quad \forall~ |z|\le  m_i^{\frac{p_i-1}{4}}.
\]
\end{prop}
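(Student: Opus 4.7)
The plan is that Proposition \ref{prop:expansion} should follow immediately by combining the two preceding lemmas: Lemma \ref{lem:expansion-a} bounds $|\varphi_i - U_1|$ by a quantity that includes $\tau_i$ as one of the terms in the maximum, while Lemma \ref{lem:expansion-b} bounds $\tau_i$ itself by (essentially) the same other terms appearing in that maximum. So the strategy is simply to substitute the bound from Lemma \ref{lem:expansion-b} into the bound from Lemma \ref{lem:expansion-a}, thereby eliminating the $\tau_i$ term entirely from the right-hand side.

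More concretely, I would split into the three dimension ranges. For $5 \le n \le 7$, Lemma \ref{lem:expansion-a} gives $|\varphi_i - U_1| \le C\max\{\tau_i, m_i^{-2}\}$, and Lemma \ref{lem:expansion-b} gives $\tau_i \le C m_i^{-2}$, so the maximum collapses to $Cm_i^{-2}$. For $n = 8$, Lemma \ref{lem:expansion-a} gives $|\varphi_i - U_1| \le C\max\{\tau_i, \Theta_i m_i^{-2}\log m_i, m_i^{-2}\}$, and Lemma \ref{lem:expansion-b} bounds $\tau_i$ by $C\max\{\Theta_i m_i^{-2}\log m_i, m_i^{-2}\}$, so again the $\tau_i$ term is absorbed. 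The case $n \ge 9$ is entirely analogous with $m_i^{-8/(n-4)}$ in place of $m_i^{-2}\log m_i$.

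Since the substitution is purely algebraic and nothing new is needed, there is no genuine obstacle here; the real analytic work (the non-degeneracy argument via Lemma \ref{lem:non-degeneracy}, the contradiction argument with the rescaled difference quotient, and the Pohozaev-type integration against $\phi(z) = \frac{n-4}{2}U_1 + z \cdot \nabla U_1$ to extract the $\tau_i$ bound) has already been done in Lemmas \ref{lem:expansion-a} and \ref{lem:expansion-b}. The proof of the proposition itself should therefore be a one or two sentence deduction.
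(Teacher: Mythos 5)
Your proposal is correct and is exactly the paper's own argument: the paper proves Proposition \ref{prop:expansion} by noting it follows immediately from Lemma \ref{lem:expansion-a} and Lemma \ref{lem:expansion-b}, i.e., by absorbing the $\tau_i$ term via the bound of Lemma \ref{lem:expansion-b}, just as you describe.
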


\begin{proof} It follows immediately from Lemma \ref{lem:expansion-a} and  Lemma \ref{lem:expansion-b}.

\end{proof}

\begin{prop}\label{prop:expansion8+} Under the hypotheses in Lemma \ref{lem:expansion-a}, we have, for every $|z|\le  m_i^{\frac{p_i-1}{4}}$,
\[
|\varphi_i (z)-U_1(z)| \le C \begin{cases}
\max\{ \Theta_im_i^{-2}m_i^{\frac{2}{n-4}}(1+|z|)^{-1}, m_i^{-2}\},& \quad \mbox{if } n=8, \\
\max\{ \Theta_i m_i^{-2}m_i^{\frac{2(n-8)}{n-4}}(1+|z|)^{8-n}, m_i^{-2}\} ,& \quad\mbox{if } n\ge 9.
\end{cases}
\]
\end{prop}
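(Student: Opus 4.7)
The plan is to improve Proposition~\ref{prop:expansion} by one further round of analysis on the integral equation satisfied by $v_i := \varphi_i - U_1$. Subtracting the rescaled form of \eqref{eq:IE-cond} (with $\lambda = \ell_i$) from \eqref{eq:varphi}, the function $v_i$ satisfies
\[
v_i(z) = \int_{B_{\ell_i}} G_{i,\ell_i^{-1}}(z,y)\bigl[b_i(y) v_i(y) + T_i(y)\bigr]\,\ud y + \mathcal{O}(m_i^{-2}),
\]
where $b_i(y) \lesssim (1+|y|)^{-8}$ and $T_i$ is the source term from Lemma~\ref{lem:expansion-a}. Using Lemma~\ref{lem:expansion-b} to absorb the $\tau_i$-part of $T_i$, one obtains the pointwise bound
\[
|T_i(y)| \leq C\max\bigl(\Theta_i m_i^{-8/(n-4)},\, m_i^{-2}\bigr)(1+|y|)^{-(n-4)}.
\]

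The promised decay comes from integrating the source against $G_{i,\ell_i^{-1}}$. The key Riesz-convolution inequality is
\[
\int_{B_{\ell_i}} |z-y|^{4-n}(1+|y|)^{-(n-4)}\,\ud y \leq C\begin{cases}\log\!\bigl(2\ell_i/(1+|z|)\bigr), & n = 8,\\ (1+|z|)^{\,8-n}, & n\geq 9,\end{cases}
\]
which I will derive by splitting the domain into the regions $\{|y|\leq |z|/2\}$, $\{|z|/2<|y|<2|z|\}$, and $\{|y|\geq 2|z|\}$ and applying the pointwise convolution estimates of Lemma~\ref{lem:aux2}. For $n\geq 9$ this is precisely the decay claimed. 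For $n=8$ I will invoke the elementary inequality $\log x\leq x$ for $x\geq 1$ to convert $\log(\ell_i/(1+|z|))$ into $\ell_i/(1+|z|) = m_i^{2/(n-4)}(1+|z|)^{-1}$, which is exactly the factor appearing in the statement.

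It then remains to check that the feedback term $\int G_{i,\ell_i^{-1}}(z,y) b_i(y) v_i(y)\,\ud y$ is subdominant. Using the uniform input $|v_i|\leq C\max(\Theta_i \alpha_i, m_i^{-2})$ from Proposition~\ref{prop:expansion} together with the analogous convolution estimate for $(1+|y|)^{-8}$, this contribution decays at worst like $(1+|z|)^{-4}$; a finite bootstrap (each iteration raising the decay exponent by $4$ until it is capped by $n-4$) makes it negligible compared to the source contribution. The main obstacle in the argument is the $n=8$ case, where the Riesz convolution of $U_1(y)$ against the kernel $|z-y|^{4-n}$ grows logarithmically rather than decaying, and the clean rate $(1+|z|)^{-1}$ is achievable only by trading a $\log$ for a factor $\ell_i$. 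This trade-off explains why the resulting bound is \emph{weaker} than Proposition~\ref{prop:expansion} near $z=0$ but \emph{stronger} near the outer edge $|z|\sim\ell_i$ of the bubble, with the two propositions being genuinely complementary.
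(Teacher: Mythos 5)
Your overall route is the same as the paper's: subtract the bubble identity, estimate the inhomogeneous term $T_i$ by Riesz convolution via Lemma \ref{lem:aux2}, handle the borderline case $n=8$ by trading the logarithm for a factor of order $\ell_i/(1+|z|)$ with $\ell_i=m_i^{(p_i-1)/4}$, and kill the feedback term $\int G_{i,\ell_i^{-1}}b_iv_i$ by a finite bootstrap. (The paper first reduces to the case $m_i^{-2}=o(\Theta_i\al_i)$, normalizes $v_i=(\varphi_i-U_1)/(\Theta_i\al_i')$, and for $n=8$ absorbs $m_i^{-2/(n-4)}$ into the weight, $m_i^{-1/2}(1+|y|)^{-4}\le C(1+|y|)^{-5}$ on $B_{\ell_i}$, before applying Lemma \ref{lem:aux2}; these are cosmetic differences from your unnormalized version, and your bootstrap with gain $4$ per step versus the paper's $3.5$ is immaterial.)

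There is, however, a genuine problem in your treatment of $n=8$, coming from the lumped bound on $T_i$. First, Lemma \ref{lem:expansion-b} gives $\tau_i\le C\max\{\Theta_i m_i^{-2}\log m_i,\,m_i^{-2}\}$ when $n=8$, not $C\max\{\Theta_i m_i^{-2},m_i^{-2}\}$, so your coefficient is already off by $\log m_i$. More seriously, by attaching the $\tau_i$-part of $T_i$ to the slowly decaying profile $(1+|y|)^{-(n-4)}$ you discard its true decay $(1+|y|)^{-(n+4)}$ (up to a logarithm), recorded in \eqref{eq:Ti-estimate}, and that decay is exactly what saves $n=8$: after your log-to-linear trade the lumped source yields terms of the form $m_i^{-2}\ell_i(1+|z|)^{-1}=m_i^{-3/2}(1+|z|)^{-1}$ (carrying no $\Theta_i$) and $\Theta_i m_i^{-3/2}\log m_i(1+|z|)^{-1}$, neither of which is dominated by the stated $\max\{\Theta_i m_i^{-3/2}(1+|z|)^{-1},\,m_i^{-2}\}$: the first exceeds $m_i^{-2}$ on the whole range $|z|\lesssim\ell_i$ when $\Theta_i$ is small, and the second has an extra $\log m_i$. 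The repair is what the paper does: keep the two parts of $T_i$ separate, so that the $\tau_i$-part contributes only $C\tau_i(1+|z|)^{4-n}$ after convolution (harmless in all dimensions $n\ge 8$), while only the term $m_i^{-8/(n-4)}c'_{\ell_i,i}U_1$, whose coefficient is $\Theta_i m_i^{-8/(n-4)}$ with no logarithm, carries the $(1+|y|)^{4-n}$ profile; it also helps to invoke at the outset the reduction that one may assume $m_i^{-2}\le C\Theta_i\al_i$, since otherwise Proposition \ref{prop:expansion} already implies the claim. For $n\ge 9$ your argument is fine as written, because there $(1+|z|)^{8-n}\le 1$ and the lumping causes no loss.
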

\begin{proof} Let $\al_i$ be defined in \eqref{eq:ali}. We may assume that  $\frac{m_i^{-2}}{\Theta_i\al_i}\to 0$ as $i\to \infty$ for $n\ge 8$; otherwise the proposition follows immediately from Proposition \ref{prop:expansion}.  Set
\[
\al'_i=\begin{cases}
m_i^{-2}m_i^{\frac{2}{n-4}},& \quad \mbox{if } n=8, \\
m_i^{-2}m_i^{\frac{2(n-8)}{n-4}},& \quad\mbox{if } n\ge 9,
\end{cases}
\]
and
\[
v_i(z)=\frac{\varphi_i(z)-U_1(z)}{\Theta_i \al'_i}, \quad |z|\le  m_i^{\frac{p_i-1}{4}}.
\]
Since $\frac{m_i^{-2}}{\Theta_i\al_i}\to 0$, it follows from Proposition \ref{prop:expansion} that $|v_i|\le C$. Since $0<\varphi_i\le CU_1$, we only need to prove the proposition when $|z|\le \frac{1}{2}\ell_i$, where  $\ell_i=m_i^{\frac{p_i-1}{4}}$.  Similar to \eqref{eq:newscale1}, $v_i$ now satisfies
\[
v_i(z)=\int_{B_{\ell_i}} G_{i,\ell_i^{-1}}(z,y) (b_i(y) v_i(y)+\frac{1}{\Theta_i \al'_i}T_i(y))\,\ud y+\frac{1}{\Theta_i \al_i'}\mathcal{O}(m_i^{-2}),
\]
where $b_i$ and $T_i$ are given by \eqref{eq:b-i} and \eqref{eq:T-i}, respectively.
Noticing that
\[
|T_i(y)|\le C\tau_i (|\log  U_1|+|\log \tilde \kappa_i|)(1+|y|)^{-4-n}+m_i^{-\frac{8}{n-4}} \Theta_i (1+|y|)^{4-n},
\]
we have
\begin{align*}
\frac{1}{\Theta_i \al_i'}\int_{B_{\ell_i}} G_{i,\ell_i^{-1}}(z,y) |T_i(y)|\,\ud y&\le C\int_{B_{\ell_i}} \frac{1}{|z-y|^{n-4}(1+|y|)^4 m_i^{\frac{2}{n-4}}}  \,\ud y\\&
\le C \int_{B_{\ell_i}} \frac{1}{|z-y|^{n-4}(1+|y|)^5}  \,\ud y
\le C(1+|z|)^{-1} \quad \mbox{for }n=8,
\end{align*}
\[
\frac{1}{\Theta_i\al_i'}\int_{B_{\ell_i}} G_{i,\ell_i^{-1}}(z,y) |T_i(y)|\,\ud y \le C(1+|z|)^{8-n} \quad \mbox{for }n\ge 9.
\]
Thus
\[
|v_i(z)|\le C((1+|z|)^{-3.5}+(1+|z|)^{-1}) \quad \mbox{for }n=8,
\]
\[
|v_i(z)|\le C((1+|z|)^{-3.5}+(1+|z|)^{8-n}) \quad \mbox{for }n\ge 9.
\] If $n=8,9,10,11$,  the conclusion follows immediately from multiplying both sides of the above inequalities by $\al'_i$. If $n\ge 12$, the above estimate gives $|v_i(z)|\le C(1+|z|)^{-3.5}$. Plugging this estimate to the term $\int G_{i,\ell_i^{-1}}(z,y)b_i(y)v_i(y)\,\ud y$ yields $|v_i(z)|\le C(1+|z|)^{8-n}$ as long as $n\le 14$. Repeating this process, we complete the proof.

\end{proof}

\begin{cor}\label{cor:hot-expansion} Under the hypotheses in Lemma \ref{lem:expansion-a}, we have, for $|z|\le  m_i^{\frac{p_i-1}{4}}$, $k=1,2,3,4$,
\begin{align*}
&|\nabla^k(\varphi_i-U_1)(z)|\\&  \le C(1+|z|)^{-k} \begin{cases}
m_i^{-2},& \quad \mbox{if } 5\le n\le 7,\\
\max\{ \Theta_im_i^{-2}m_i^{\frac{2}{n-4}}(1+|z|)^{-1}, m_i^{-2}\},& \quad \mbox{if } n=8, \\
\max\{ \Theta_i m_i^{-2}m_i^{\frac{2(n-8)}{n-4}}(1+|z|)^{8-n}, m_i^{-2}\} ,& \quad\mbox{if } n\ge 9.
\end{cases}
\end{align*}
\end{cor}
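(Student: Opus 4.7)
The plan is to differentiate the integral equation for $v_i := \varphi_i - U_1$ and combine the derivative estimate on the kernel from \eqref{G} with the pointwise bound of Proposition \ref{prop:expansion8+}. From the proofs of Lemma \ref{lem:expansion-a} and Lemma \ref{lem:expansion-b}, $v_i$ satisfies
\[
v_i(z) = \int_{B_{\ell_i}} G_{i,\ell_i^{-1}}(z,y)\bigl[b_i(y) v_i(y) + T_i(y)\bigr]\,\ud y + \mathcal{O}(m_i^{-2}),
\]
where $\ell_i = m_i^{(p_i-1)/4}$, $|b_i(y)| \le C(1+|y|)^{-7.5}$, and $T_i$ obeys the weighted bound in \eqref{eq:Ti-estimate}. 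Since $G_{i,\ell_i^{-1}}(z,y) = \ell_i^{4-n} G_i(\ell_i^{-1}z, \ell_i^{-1}y)$, hypothesis \eqref{G} gives $|\nabla_z^k G_{i,\ell_i^{-1}}(z,y)| \le A_1 |z-y|^{4-n-k}$ for $k=0,1,\ldots,5$; moreover, the ``$\mathcal{O}(m_i^{-2})$'' remainder was defined so that its $k$-th derivative is $O(m_i^{-2-2k/(n-4)})$, which is exactly $\ell_i^{-k}$ times the $C^0$ bound and thus compatible with the target estimate.

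For a fixed point $z_0$ with $|z_0| \le \tfrac12 \ell_i$, I would set $R := \max(|z_0|,1)$ and rescale by $w_i(\xi) := v_i(z_0 + (R/4)\xi)$ on $B_1$. In the region $|z_0 + (R/4)\xi - y| \le R$ that contributes the main local mass, $|y| \sim R$, so after the dilation the kernel retains its $|z-y|^{4-n}$ singularity, the coefficient $b_i$ acquires size $R^{-15/2}$, and the inhomogeneity $T_i$ together with the $\mathcal{O}(m_i^{-2})$ term has $C^k$-norms bounded by $R^{-k}\beta_n(R)$, where $\beta_n(R)$ denotes the $n$-dependent bound on $|v_i|$ furnished by Proposition \ref{prop:expansion8+}. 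Applying the interior integral regularity used in the proof of Proposition \ref{prop:upbound3} (invoking Proposition \ref{prop:local estimates} after the rescaling) yields
\[
|\nabla^k w_i(0)| \le C\bigl(\|w_i\|_{L^\infty(B_1)} + \text{remainder}\bigr),
\]
which, unscaling, gives $|\nabla^k v_i(z_0)| \le C R^{-k} \beta_n(R)$. The contribution from the far region $|y - z_0| \gg R$ is handled by differentiating directly under the integral sign and using that $\nabla_z^k G$ picks up an extra $|z-y|^{-k}$ factor, which suppresses the tail by the same $R^{-k}$ amount after applying Lemma \ref{lem:aux2}.

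For the complementary range $\tfrac12 \ell_i \le |z| \le \ell_i$, one does not need the integral equation: there both $\varphi_i$ and $U_1$ are $O(m_i^{-1})$, and the derivative bounds on $\varphi_i$ transferred from Proposition \ref{prop:upbound3} (via the change of variable $z = \ell_i x$) together with the explicit derivatives of $U_1$ give $|\nabla^k(\varphi_i - U_1)(z)| \le C m_i^{-1}\ell_i^{-k}$, which matches the required bound since $(1+|z|)^{-k} \sim \ell_i^{-k}$ in this region.

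The main obstacle is verifying that the remainder produced by $T_i$ against $\nabla_z^k G_{i,\ell_i^{-1}}$ decays at the right rate, especially because $T_i$ has two qualitatively different pieces (the $\tau_i$-logarithmic piece and the $\Theta_i m_i^{-8/(n-4)}$ Green-function-type piece). This amounts to a careful application of Lemma \ref{lem:aux2} with the exponents coming from Proposition \ref{prop:expansion8+}, and in high dimensions $n \ge 12$ is expected to require the same bootstrap step that appears at the end of the proof of Proposition \ref{prop:expansion8+}: one first obtains an intermediate decay rate $(1+|z|)^{-3.5-k}$ on $\nabla^k v_i$, then re-inserts it into the convolution with $b_i$ to upgrade to $(1+|z|)^{8-n-k}$, iterating until the exponent stabilizes at the claimed value.
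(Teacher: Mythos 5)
Your strategy is essentially the paper's: subtract the two integral identities to get an equation for $v_i=\varphi_i-U_1$, differentiate it, and control the convolutions using the kernel bounds in \eqref{G}, Lemma \ref{lem:aux2}, the $C^0$ decay already supplied by Propositions \ref{prop:expansion} and \ref{prop:expansion8+}, and, for $k=4$ (where $|\nabla_z^4G|\sim|z-y|^{-n}$ is no longer locally integrable), the technique from the proof of Proposition \ref{prop:local estimates}; the pointwise rescaling at scale $R\sim 1+|z_0|$ is just a different packaging of the same estimates.

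Two points need repair. First, your treatment of the outer annulus $\tfrac12\ell_i\le|z|\le\ell_i$ is off by a factor of $m_i$: there $U_1(z)\approx\ell_i^{4-n}\approx m_i^{-2}$, and by Proposition \ref{prop:upbound2} $\varphi_i(z)=m_i^{-1}u_i(\ell_i^{-1}z)\le Cm_i^{-2}$, not merely $O(m_i^{-1})$; transferring Proposition \ref{prop:upbound3} through the scaling gives $|\nabla_z^k\varphi_i(z)|=m_i^{-1}\ell_i^{-k}|(\nabla^k u_i)(\ell_i^{-1}z)|\le Cm_i^{-2}\ell_i^{-k}$, and similarly for $U_1$, which is what the claimed estimate requires there ($(1+|z|)^{-k}\approx \ell_i^{-k}$); the bound $Cm_i^{-1}\ell_i^{-k}$ you wrote down does \emph{not} match the statement, so you must carry the correct power. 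Second, the bootstrap you anticipate for $n\ge12$ is unnecessary and, as described, not meaningful: the term $\int G_{i,\ell_i^{-1}}(z,y)b_i(y)v_i(y)\,\ud y$ involves $v_i$ itself, not $\nabla^k v_i$, so once the $C^0$ decay of Proposition \ref{prop:expansion8+} is in hand, a single application of Lemma \ref{lem:aux2} with $\alpha=4-k$, using $|b_i(y)|\le C(1+|y|)^{-7.5}$, \eqref{eq:Ti-estimate}, and (for $n=8$) the absorption $m_i^{-2/(n-4)}\le C(1+|y|)^{-1}$ exactly as in Proposition \ref{prop:expansion8+}, already produces the stated rates for every $n$. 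Finally, be aware that Proposition \ref{prop:local estimates} cannot be invoked verbatim, since $v_i$ changes sign and your inhomogeneity does not satisfy \eqref{apdx:H}; what is actually needed, and what the paper cites, is only its (linear) proof technique for the fourth derivative, which does not use positivity.
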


\begin{proof} Considering the integral equation of $v_i=\varphi_i-U_1$, the conclusion follows immediately.
Indeed, if $k<4$, we can differentiate the integral equation for $v_i$ directly. If $k=4$, we can use a standard  technique (see the proof of Proposition \ref{prop:local estimates}) for proving the higher order regularity of Riesz potential since $v_i$ and the coefficients are of $C^1$.
\end{proof}

\section{Blow up local solutions of fourth order equations}

\label{section:Q equation blow}

In the previous two sections, we have analyzed the blow up profiles of the blow up local solutions of integral equations. In this section, we will assume that those blow up solutions also satisfy differential equations, which is only used to check the Pohozaev identity in Proposition \ref{prop:4-pohozaev}. It should be possible to completely avoid using differential equations after improving Corollary \ref{cor:Q-gf-expansion}. This is the case on the sphere; see our joint work with Jin  \cite{JLX3}. On the other hand, as mentioned in the Introduction, without additional information fourth order differential equations themselves are not enough to do blow up analysis for positive local solutions.

\begin{prop}\label{prop:one-side} In addition to the hypotheses in Lemma \ref{lem:expansion-a}, assume that $u_i$ also satisfies
\be \label{eq:Q-sub}
P_{ g_i} u_i=c(n) \kappa_i^{\tau_i} u_i^{p_i}\quad \mbox{in }B_3,
\ee
where $\det g_i=1$, $B_3$ is a normal coordinates chart of $g_i$ at $0$ and $\|g_i\|_{C^{10}(B_3)}\le A_1$. Let
\[
 \mathcal{G}_i:= \sum_{k\ge 1,~  2\le k+l\le 4}\Theta_i \|\nabla^k g_i\|_{L^\infty(B_3)}^l+\sum_{k\ge 1,~  6\le k+l\le 8}\|\nabla^k g_i\|_{L^\infty(B_3)}^l ,
 \]
\[
\beta_i:=\begin{cases} (\log m_i)^{-1},& \quad \mbox{if }n=8,\\
m_i^{-\frac{2}{n-4}}, &\quad \mbox{if }n=9,\\
m_i^{-\frac{4}{n-4}}\log m_i, &\quad \mbox{if }n=10,\\
m_i^{-\frac{4}{n-4}}, &\quad \mbox{if }n\ge 11.
\end{cases}
\]
Then, after passing to a subsequence, there exists $C^*>0$ depending only on $n,A_1,A_2, A_3$ such that
\begin{itemize}
\item[(i)] If either $n\le 9$ or $g_i$ is flat, we have, with $\Gamma(\cdot)=\lim_{i\to \infty} u_i(0)u_i(\cdot)$ as in Remark \ref{rem:convergence},
\be \label{eq:sign restrict}
\liminf_{r\to 0}\mathcal{P}(r,\Gamma)\ge 0.
\ee
\item[(ii)]  \eqref{eq:sign restrict} still holds if $n\ge 10$ and
\be \label{eq:r2}
|W_{g_i}(0)|^2 > C^*  \mathcal{G}_i \beta_i.
 \ee

\item[(iii)]
\[
|W_{g_i}(0)|^2 \le C^* \beta_i \begin{cases}
\mathcal{G}_i+1,  & if ~n=8,9,\\
\mathcal{G}_i +(\log m_i)^{-1}, & if ~ n=10, \\
\mathcal{G}_i + m_i^{-\frac{2(n-10)}{n-4}}, &  if ~ n\ge 11 .
\end{cases}
\]
 \end{itemize}
\end{prop}

\begin{proof}
Notice that for every $0<r<1$
\[
m_i^{2}\mathcal{P}(r,u_i)\to \mathcal{P}(r,\Gamma)\quad \mbox{as }i\to \infty.
\]
By Proposition \ref{prop:4-pohozaev}, \eqref{eq:poho-2.2} holds with $u=u_i$,
\begin{align*}
E(u_i):&=P_{g_i}u_i-\Delta^2 u_i\\&
=\frac{n-4}{2}Q_{g_i}u_i+ f^{(1)}_{i,k}\pa_ku_i +f^{(2)}_{i,kl}\pa_{kl}u_i+f^{(3)}_{i,kls}\pa_{kls}u_i+f^{(4)}_{i,klst}\pa_{klst}u_i,
\end{align*}
\be \label{eq:fi}
| f^{(1)}_{i,k}(x)|+ | f^{(2)}_{i,kl}(x)|+ ||x|^{-1}  f^{(3)}_{i,kls}(x)| +| |x|^{-2} f^{(4)}_{i,klst}(x)| \le C\sum_{k\ge 1, 2\le k+1\le 4} \|\nabla^k g_i\|_{L^\infty(B_\delta)}^l,
\ee
\begin{align*}
\mathcal{N}(r,u_i)=\frac{c(n)\tau_i}{p_i+1} \int_{B_r} (\frac{n-4}{2}\kappa_i^{\tau_i}+ x^k\pa_k \kappa_i \kappa_i^{\tau_i-1}) u_i^{p_i+1}
-\frac{r}{p_i+1}\int_{\pa B_{r}} c(n)\kappa_i^{\tau_i} u_i^{p_i+1}.
\end{align*}
By Proposition \ref{prop:upbound2}, for $0<r<1$ we have, for some $C>0$ independent of $i$ and $r$,
\begin{align}\label{eq-eq2}
m_i^{2}\mathcal{N}(r,u_i) \ge
-\frac{m_i^{2}r}{p_i+1}\int_{\pa B_{r}} c(n)\kappa_i^{\tau_i} u_i^{p_i+1}\ge -Cr^{-n}m_i^{1-p_i},
\end{align}
where we used the facts that $\kappa_i(x)=1+O(|x|^2)$ and $|\nabla \kappa_i(x)|=O(|x|)$ with $O(\cdot)$ independent of $i$. Hence, we have
\be \label{eq:tos-1}
\liminf_{i\to \infty}m_i^{2}\mathcal{N}(r,u_i)\ge 0.
\ee

Throughout this section, unless  otherwise stated, we use $C$ to denote some constants independent of $i$ and $r$.

If $g_i$ is flat, we complete the proof because $E(u_i)=0$.

Now  we assume $g_i$ is not flat. By a change of variables $z=\ell_i x$ with $\ell_i=m_i^{\frac{p_i-1}{4}}$, we have
\begin{align*}
\mathcal{ E}_i(r):&= m_i^{2}\int_{B_r} (x^k \pa_k u_i +\frac{n-4}{2} u_i) E(u_i) \,\ud x\\&
=m_i^{2} m_i^{2+(4-n)\frac{p_i-1}{4}} \int_{B_{\ell_i r}}\Big( z^k\pa_k\varphi_i+\frac{n-4}{2}\varphi_i\Big)\cdot\\& \quad
\Big(\frac{n-4}{2}\ell_i^{-4}Q_{g_i}(\ell_i^{-1}z)\varphi_i+ \sum_{j=1}^4\ell_i^{-4+j}f_i^{(j)}(\ell_i^{-1}z)\nabla^j \varphi_i\Big)\,\ud z,
\end{align*}
where $\varphi_i(z)=m_i^{-1}u_i(m_i^{-\frac{p_i-1}{4}}z)$, $
f_i^{(1)}(\ell_i^{-1}z)\nabla^1 \varphi_i=f^{(1)}_{i,k}(\ell_i^{-1}z)\pa_k\varphi_i$
and $f_i^{(j)}(\ell_i^{-1}z)\nabla^j \varphi_i$ is defined in the same fashion for $j\neq 1$. Define
\begin{align*}
\mathcal{\hat E}_i(r):&=m_i^{2} m_i^{2+(4-n)\frac{p_i-1}{4}} \int_{B_{\ell_i r}}\Big( z^k\pa_kU_1+\frac{n-4}{2}U_1\Big)\cdot\\& \quad
\Big(\frac{n-4}{2}\ell_i^{-4}Q_{g_i}(\ell_i^{-1}z)U_1+ \sum_{j=1}^4\ell_i^{-4+j}f_i^{(j)}(\ell_i^{-1}z)\nabla^j U_1\Big)\,\ud z.
\end{align*}
Notice that $m_i^{2+(4-n)\frac{p_i-1}{4}}=1+o(1)$, and $Q_{\tilde g}=O(1)$. By Proposition \ref{prop:expansion}, Proposition \ref{prop:expansion8+}, Corollary \ref{cor:hot-expansion}, \eqref{eq:fi}, we have
\begin{align}
&|\mathcal{ E}_i(r)-\mathcal{\hat E}_i(r)|\nonumber \\&\le C  \sum_{k\ge 1,~  2\le k+l\le 4}\|\nabla^k g_i\|_{L^\infty(B_3)}^l m_i^{2}m_i^{-\frac{4}{n-4}}\int_{B_{\ell_i r}} \sum_{j=0}^4|\nabla^j(\varphi_i-U_1)|(z)(1+|z|)^{2-n+j}\,\ud z\nonumber \\&
\le C \sum_{k\ge 1,~  2\le k+l\le 4}\|\nabla^k g_i\|_{L^\infty(B_3)}^l
\begin{cases}
r^2, & \quad \mbox{if }n=5,6,7,\\
\Theta_i r+r^2, & \quad \mbox{if }n=8,9,\\
\Theta_i  \log (rm_i)+  r^2,& \quad \mbox{if }n=10,\\
\Theta_i m_i^{\frac{2(n-10)}{n-4}}+ r^2,& \quad \mbox{if }n\ge 11.
\end{cases}
\label{eq:tos-2}
\end{align}

Now we estimate $\mathcal{\hat E}_i(r)$.

If $n=5,6,7$, we have
\[
\begin{split}
\mathcal{\hat E}_i(r)&= m_i^{2+(n-4)\tau_i}\int_{B_r} (x^k \pa_k U_{\ell_i} +\frac{n-4}{2} U_{\ell_i}) E(U_{\ell_i}) \,\ud x\\
&= m_i^{2+(n-4)\tau_i}\int_{B_r} (x^k \pa_k U_{\ell_i} +\frac{n-4}{2} U_{\ell_i}) (P_{ g_i}-\Delta^2)U_{\ell_i}\,\ud x\\&
=O(1)m_i^{2}\int_{B_r} |x^k \pa_k U_{\ell_i}+\frac{n-4}{2} U_{\ell_i}| U_{\ell_i}\,\ud x,
\end{split}
\]
where we have  used the estimate $(P_{ g_i}-\Delta^2)U_{\ell_i}=O(1) U_{\ell_i}$ because of \eqref{eq:cor-GM}, and
\[
||x|^k\nabla_x^k U_{\ell_i}(x)|\le C(n,k) U_{\ell_i}(x)\quad  \mbox{for } k\in \mathbb{N}.
\]
Hence,
\be \label{eq:tos-3}
|\mathcal{\hat E}_i(r)|\le Cr^{8-n}\le Cr.
\ee
Therefore, \eqref{eq:sign restrict} follows from \eqref{eq:tos-1}, \eqref{eq:tos-2} and \eqref{eq:tos-3} when $n=5,6,7$.

If $n\ge 8$,  by Lemma \ref{lem:GM2.8} we have
\begin{align*}
\mathcal{\hat E}_i(r) =& -\frac{2}{n}\gamma_i\int_{B_{\ell_i r}}(s\pa_s U_1+\frac{n-4}{2}U_1)(c_1^*s\pa_s U_1+c_2^*s^2 \pa_{ss}U_1)\,\ud z\\&
-\frac{32(n-1)\gamma_i}{3(n-2)n^2} \int_{B_{\ell_i r}}(s\pa_s U_1+\frac{n-4}{2}U_1)s^2(\pa_{ss}U_1-\frac{\pa_s U_1}{s})\,\ud z \\&
+(n-4)\gamma_i\int_{B_{\ell_i r}}(s\pa_s U_1+\frac{n-4}{2}U_1)U_1\,\ud z+O(\al_i'')\sum_{k\ge 1,~  6\le k+l\le 8}\|\nabla^k g_i\|_{L^\infty(B_3)}^l,
\end{align*}
where we have used the cancellation that those terms involving homogeneous polynomials of odd degrees are canceled, $s=|z|$, $\gamma_i=\frac{m_i^{\frac{2(n-8)}{n-4}+(n-4)\tau_i}|W_{g_i}(0)|^2}{24(n-1)}\ge 0$,
\[
\al_i''=\int_{B_r} |x|^2 U_{\ell_i}(x)^2\,\ud x=O(1)\begin{cases}
r^{10-n}, & \quad \mbox{if }n=8,9,\\
\log rm_i,& \quad \mbox{if }n=10,\\
m_i^{\frac{2(n-10)}{n-4}},& \quad \mbox{if }n\ge 11,
\end{cases}
\]
and $c_1^*, c_2^*$ are given in Lemma \ref{lem:GM2.8}.
By direct computations,
\[
r\pa_r U_1+\frac{n-4}{2}U_1=\frac{n-4}{2}\frac{1-r^2}{(1+r^2)^{\frac{n-2}{2}}},
\]
\begin{align*}
c_1^*r\pa_r U_1+c_2^*r^2 \pa_{rr}U_1&=(4-n)\frac{(c_1^*+c_2^*)r^2}{(1+r^2)^{\frac{n-2}{2}}}+(4-n)(2-n)\frac{c_2^*r^4}{(1+r^2)^{\frac{n}{2}}}\\
&=(4-n)\frac{(c_1^*+c_2^*)r^2+(c_1^*+(3-n)c_2^*)r^4}{(1+r^2)^{\frac{n}{2}}},
\end{align*}
\[
\pa_{rr} U_1-\frac{\pa_r U_1}{r}=(n-4)(n-2)(1+r^2)^\frac{-n}{2} r^2.
\]
Thus
\begin{align*}
\mathcal{\hat E}_i(r) = \frac{(n-4)^2}{n}\gamma_i |\mathbb{S}^{n-1}| J_i+O(\al_i'')\sum_{k\ge 1,~  6\le k+l\le 8}\|\nabla^k g_i\|_{L^\infty(B_3)}^l,
\end{align*}
where
\[
J_i:=\int^{\ell_i r}_0 \frac{(1-s^2)[\frac{n}{2}+(c_1^*+c_2^*+n)s^2+(c_1^*+(3-n)c_2^*-\frac{16(n-1)}{3n}+\frac{n}{2})s^4]s^{n-1}}{(1+s^2)^{n-1}}\,\ud s.
\]
If $n=8$, we have $-(c_1^*+(3-n)c_2^*+\frac{n}{2})= (2n-12)+\frac{14}{3}-4=\frac{14}{3}$. Since $\int^{\ell_i r}_0 \frac{s^{13}}{(1+s^2)^7}\,\ud s\to \infty$ as $\ell_i\to \infty$, Hence, $J_i\to \infty$ as $i\to \infty$.
For $n\ge 9$, we  notice that for positive integers $2< m+1<2k$,
\[
\int_0^\infty \frac{t^m}{(1+t^2)^k}\,\ud t=\frac{m-1}{2k-m-1}\int_{0}^\infty \frac{t^{m-2}}{(1+t^2)^k}\,\ud t.
\]
If $\ell_i r =\infty$,  we have
\begin{align*}
J_i=&\Big\{-\frac{2n}{n-4}-(c_1^*+c_2^*+n)\frac{8n}{(n-6)(n-4)}\\&-(c_1^*+(3-n)c_2^*-\frac{16(n-1)}{3n}+\frac{n}{2}) \frac{12n(n+2)}{(n-8)(n-6)(n-4)}\Big\}  \int_0^\infty \frac{s^{n-1}}{(1+s^2)^{n-1}}\,\ud s.
\end{align*}
We compute  the coefficients  of the integral,  \begin{align*}
&-\frac{2n}{n-4}-(c_1^*+c_2^*+n)\frac{8n}{(n-6)(n-4)}\\&\quad -(c_1^*+(3-n)c_2^*-\frac{16(n-1)}{3n}+\frac{n}{2}) \frac{12n(n+2)}{(n-8)(n-6)(n-4)}\\&
=\frac{2n}{n-4}\Big\{-1+(\frac{n(n-2)}{2}-8)\frac{4}{(n-6)}+(\frac{3n}{2}+\frac{16(n-1)}{3n}-12) \frac{6(n+2)}{(n-8)(n-6)}\Big\}\\&
= \frac{2n}{n-4}\Big\{\frac{2n^2 -5n-26}{n-6}+\frac{9(n+2)}{(n-6)}+\frac{32(n-1)(n+2)}{n(n-8)(n-6)}\Big\}\\&
\ge \frac{4n(n^2+2n-4)}{(n-4)(n-6)}>0.
\end{align*}
Therefore, for any $0<r<1$ and sufficiently large $i$ (the largeness of $i$ may depend on $r$), we have
\[
J_i\ge 1/C(n)>0.
\] In conclusion, we obtain
\be \label{eq:tos-6}
\mathcal{\hat E}_i(r) \ge \begin{cases} \frac{1}{C}|W_{g_i}(0)|^2 \log m_i-O(\al''_i)\sum_{k\ge 1,~  6\le k+l\le 8}\|\nabla^k g_i\|_{L^\infty(B_3)}^l, &\quad  \mbox{if }n=8,\\
 \frac{1}{C} |W_{g_i}(0)|^2 m_i^{\frac{2(n-8)}{n-4}}-O(\al''_i)\sum_{k\ge 1,~  6\le k+l\le 8}\|\nabla^k g_i\|_{L^\infty(B_3)}^l, &\quad  \mbox{if }n\ge 9.  \end{cases}
\ee

Combing  \eqref{eq:tos-2} and  \eqref{eq:tos-6}, we see that
\begin{align}
\label{eq:tos-7'}
\mathcal{ E}_i(r) \ge \mathcal{\hat E}_i(r) - |\mathcal{ E}_i(r)-\mathcal{\hat E}_i(r)| \ge   \frac{1}{C} |W_{g_i}(0)|^2 \log m_i -O(r)
\end{align}
if $n=8$; and
\begin{align}
\label{eq:tos-7}
\mathcal{ E}_i(r) \ge   \frac{1}{C} |W_{g_i}(0)|^2 m_i^{\frac{2(n-8)}{n-4}} -O(\al_i'')\mathcal{G}_i -O(r)
\end{align}
if $n\ge 9$, where $O(r)=0$ if $n\ge 10$.
When $n=8,9$, by \eqref{eq:tos-7'} and \eqref{eq:tos-7} we have  $m_i^2 \mathcal{P}(r,u_i)=m_i^2 \mathcal{N}(r, u_i)+ \mathcal{ E}_i(r) \ge m_i^2 \mathcal{N}(r, u_i)-O(r)$.   By  \eqref{eq:tos-1} we have $ \mathcal{P}(r,\Gamma) \ge -Cr$. Thus \eqref{eq:sign restrict} follows and the conclusion (i) is proved.

If $n\ge 10$ and $|W_{g_i}(0)|^2 $ satisfies \eqref{eq:r2} for  large $C^*>0$, by  \eqref{eq:tos-7} we see that
\[
\mathcal{ E}_i(r) \ge   \frac{1}{2C} |W_{g_i}(0)|^2 m_i^{\frac{2(n-8)}{n-4}}\ge 0
.\] Hence, $m_i^2 \mathcal{P}(r,u_i)=m_i^2 \mathcal{N}(r, u_i)+ \mathcal{ E}_i(r) \ge m_i^2 \mathcal{N}(r, u_i)$ and  the conclusion (ii) follows from \eqref{eq:tos-1}.

When $r\le \frac{1}{2}$, by Remark \ref{rem:convergence} we have  $\mathcal{P}(r,\Gamma)\le C$ for some $C>0$ depending only on $n,A_1,A_2,A_3$. It follows that $ \mathcal{ E}_i(r) \le C- m_i^2 \mathcal{N}(r, u_i)$. Then (iii)  follows immediately from  \eqref{eq:tos-7'}, \eqref{eq:tos-7} and \eqref{eq-eq2}.

\end{proof}

\begin{prop} \label{prop:isolated to isolated simple} Given $p_i, G_i$, and $ h_i$ satisfying \eqref{p}, \eqref{G} and \eqref{H}  respectively, $\kappa_i$ satisfying \eqref{K} with $K_i$ replaced by $\kappa_i$,  let $0\le u_i\in C^4(B_3)$ solve both \eqref{eq:s1'} and \eqref{eq:Q-sub},  and assume \eqref{eq:IE-cond} holds. Suppose that $0$ is an isolated blow up point of $\{u_i\}$  with \eqref{eq:A_3} holds. Then $0$ is an isolated simple blow up point, if one of the three cases occurs: (i) $g_i$ is flat, (ii) $n\le 9$, (iii) $n\ge 10$ and \eqref{eq:r2} holds.
\end{prop}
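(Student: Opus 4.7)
The plan is to argue by contradiction. Suppose $0$ is an isolated but not isolated simple blow up point; then by Proposition \ref{prop:blow up a bubble} the first critical point of $\bar w_i(r)=r^{4/(p_i-1)}\bar u_i(r)$ lies at $r_i=R_im_i^{-(p_i-1)/4}\to 0$, and by hypothesis there is a smallest second critical point $\bar r_i\to 0$ (necessarily a local minimum of $\bar w_i$), with $\bar w_i$ strictly decreasing on $(r_i,\bar r_i)$. Rescale by setting $\tilde u_i(x):=\bar r_i^{4/(p_i-1)}u_i(\bar r_ix)$ on $B_{3/\bar r_i}$. A change of variables produces a rescaled integral equation with data $\tilde G_i(x,y)=\bar r_i^{n-4}G_i(\bar r_ix,\bar r_iy)$, $\tilde\kappa_i(y)=\kappa_i(\bar r_iy)$, $\tilde h_i(x)=\bar r_i^{4/(p_i-1)}h_i(\bar r_ix)$, all satisfying \eqref{p}--\eqref{H} and \eqref{eq:IE-cond} with the same universal constants; the rescaled metric $\tilde g_i(\cdot):=g_i(\bar r_i\,\cdot)$ retains $\det\tilde g_i=1$ while its derivative norms carry extra powers of $\bar r_i$, so $\tilde g_i$ converges to the Euclidean metric uniformly on compact sets and the Weyl-controlled error $\tilde{\mathcal G}_i$ of Proposition \ref{prop:one-side} becomes arbitrarily small. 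By Proposition \ref{prop:lower bounded by bubble}, $\tilde u_i(0)\to\infty$, and the very choice of $\bar r_i$ makes $0$ an isolated simple blow up point of $\{\tilde u_i\}$ for any $\rho<1$.

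Now feed $\{\tilde u_i\}$ into Sections 3 and 4. Proposition \ref{prop:upbound2} together with Lemma \ref{lem:harnack} yields $\tilde u_i(0)\tilde u_i(x)\le C|x|^{4-n}$ on $B_{1+\varepsilon}\setminus\{0\}$ for some $\varepsilon>0$, and Corollary \ref{cor:convergence} delivers, along a subsequence, $\tilde u_i(0)\tilde u_i(x)\to\Gamma(x)=aG_\infty(x,0)+h(x)$ in $C^3_{\mathrm{loc}}(B_{1+\varepsilon}\setminus\{0\})$. The rescaling multiplies the constant $\bar a_i$ in \eqref{G} by $\bar r_i^{n-4}$ and the $O(|x-y|^{6-n})$ term by $\bar r_i^2$, so $G_\infty(x,0)=c_n|x|^{4-n}$ is purely singular and thus $\Gamma(x)=ac_n|x|^{4-n}+h(x)$ with $h\ge 0$ smooth and biharmonic on $B_{1+\varepsilon}$. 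Applying Proposition \ref{prop:one-side} to $\{\tilde u_i\}$ in each of the three hypothesized scenarios (observing that in the rescaled setting the geometric errors all vanish, so the flat case (i) of Proposition \ref{prop:one-side} suffices in the limit) yields $\liminf_{r\to 0}\mathcal{P}(r,\Gamma)\ge 0$; combining with Lemma \ref{lem:test-poho} applied to $\Gamma/(ac_n)$ forces $h(0)\le 0$, hence $h(0)=0$.

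The contradiction is then extracted from the critical point at $r=1$. Since $\Gamma$ is biharmonic on $B_{1+\varepsilon}\setminus\{0\}$ and $h(0)=0$, Proposition \ref{prop:4-pohozaev} applied on annuli shows that $\mathcal{P}(r,\Gamma)\equiv 0$ for $r\in(0,1+\varepsilon)$. On the other hand, $\bar{\tilde w}_i'(1)=0$ passes to the $C^3$ limit to give $\bar W'(1)=0$ for $\bar W(r):=r^{(n-4)/2}\bar\Gamma(r)$, i.e., $h_{\mathrm{avg}}(1)+\tfrac{2}{n-4}h'_{\mathrm{avg}}(1)=ac_n>0$; in particular $\bar\Gamma(1)\ge ac_n>0$, also by the bubble lower bound of Proposition \ref{prop:lower bounded by bubble}. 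Evaluating $\mathcal{P}(1,\Gamma)$ via the boundary formula \eqref{eq:Poho-a} after a spherical-harmonic decomposition of $h$ together with Almansi's representation, and substituting the critical-point identity on the radial mode, renders $\mathcal{P}(1,\Gamma)$ as a strictly positive quadratic form in $\bar\Gamma(1)$ plus nonnegative higher-mode contributions, contradicting $\mathcal{P}(1,\Gamma)=0$. The main obstacle is precisely this last algebraic step: verifying strict positivity of the boundary quadratic form in \eqref{eq:Poho-a} after substitution requires careful sign bookkeeping on each spherical-harmonic mode of $h$ on $\partial B_1$, exploiting the constraints $h\ge 0$, $h(0)=0$, and the biharmonicity of $h$.
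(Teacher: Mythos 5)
Your opening moves coincide with the paper's: rescale at the second critical point $\mu_i$ of $r^{4/(p_i-1)}\bar u_i(r)$, observe that $0$ becomes an isolated simple blow up point of the rescaled sequence $v_i$, and pass to a limit $\Gamma$ of $v_i(0)v_i$ via Corollary \ref{cor:convergence}. After that there are two genuine gaps. First, for $n\ge 10$ you argue that ``the geometric errors all vanish, so the flat case (i) of Proposition \ref{prop:one-side} suffices in the limit.'' That reasoning is not valid: case (i) requires $g_i$ flat or $n\le 9$, and the error terms in the Pohozaev computation are products of the (small) rescaled quantities $\tilde\Theta_i$, $\|\nabla^k\tilde g_i\|$ with positive powers of the new blow-up height $\tilde m_i=v_i(0)\to\infty$ (compare $|W_{\tilde g_i}(0)|^2\,\tilde m_i^{2(n-8)/(n-4)}$ against $\tilde\Theta_i\,\tilde m_i^{2(n-10)/(n-4)}$), so they do not become negligible merely because $\tilde g_i$ tends to the flat metric on compact sets. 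What is actually needed, and what the paper verifies (this is also the content of Lemma \ref{lem:isolated to isolated simple}), is that hypothesis \eqref{eq:r2} is preserved under the rescaling, using $\tilde\Theta_i\le \mu_i^4\Theta_i$ and $|W_{\tilde g_i}(0)|^2=\mu_i^4|W_{g_i}(0)|^2$, so that case (iii) of Proposition \ref{prop:one-side} applies to $\{v_i\}$ itself; your proposal never checks this.

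Second, and more seriously, your endgame cannot close as formulated. From $\liminf_{r\to 0}\mathcal{P}(r,\Gamma)\ge 0$ and Lemma \ref{lem:test-poho} you obtain only $h(0)=0$, and you then hope to contradict the critical-point identity at $r=1$ by proving $\mathcal{P}(1,\Gamma)>0$ through a spherical-harmonic/Almansi decomposition, a step you yourself leave unverified. With only the constraints you impose it is in fact false: take $h(x)=\frac{n-4}{n}\,a c_n|x|^2$. This $h$ is smooth, nonnegative, biharmonic, $h(0)=0$, the function $\Gamma=ac_n|x|^{4-n}+h$ satisfies $\frac{d}{dr}\bigl(r^{(n-4)/2}\bar\Gamma(r)\bigr)\big|_{r=1}=0$, and $\mathcal{P}(r,\Gamma)\equiv 0$, so no contradiction is available at this stage; the Pohozaev invariant is blind to the $|x|^2$ mode. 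The missing ingredient is the rigidity of the regular part: the paper shows that the limit of the outer potential $v_i(0)Q_i''$ is a \emph{constant}, via the kernel estimate $|\nabla_x \tilde G_i(x,y)|\le A_1|x-y|^{3-n}\le \frac{A_1^2}{R-t}\tilde G_i(x,y)$ for $|y|>R$ and the fact that the rescaled domain $B_{3/\mu_i}$ exhausts $\R^n$, so $R\to\infty$ is allowed. Once the regular part is constant, the vanishing derivative at $r=1$ forces that constant to equal $ac_n>0$, and Lemma \ref{lem:test-poho} then gives $\lim_{r\to 0}\mathcal{P}(r,\Gamma)=-(n-4)^2(n-2)a^2c_n^2|\mathbb{S}^{n-1}|<0$, contradicting Proposition \ref{prop:one-side}. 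You need either this constancy argument or some substitute for it; nonnegativity, biharmonicity and $h(0)=0$ alone are insufficient.
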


\begin{proof} By Proposition \ref{prop:blow up a bubble}, $r^{4/(p_i-1)}\overline u_i(r)$ has precisely
one critical point in the interval $0<r<r_i$,
where $R_i\to \infty$ $r_i=R_iu_i(0)^{-\frac{p_i-1}{4}}$ as in Proposition \ref{prop:blow up a bubble}.
Suppose by contradiction that $0$ is not an isolated simple blow up point and let $\mu_i$ be the second critical point of $r^{4/(p_i-1)}\overline u_i(r)$. Then we must have
\[
\mu_i\geq r_i,\quad \dlim_{i\to \infty}\mu_i=0.
\]

Set $
v_i(x)=\mu_i^{4/(p_i-1)}u_i(\mu_i x)$ for $ x\in B_{3/\mu_i}$.
By the assumptions of Proposition \ref{prop:blow up a bubble}, $v_i$ satisfies
\begin{align*}
v_i(x)=\int_{B_{3/\mu_i}}\tilde G_{i}(x,y)\tilde \kappa_i(y)^{\tau_i} v_i(y)^{p_i}\,\ud y+\tilde h_i(x)
\\[2mm]
\lim_{i\to \infty}v_i(0)=\infty, \quad  |x|^{4/(p_i-1)}v_i(x)\leq A_3\quad \mbox{for } |x|<2/\mu_i \to \infty,
\end{align*}
\[
r^{4/(p_i-1)}\overline v_i(r)\mbox{ has precisely one critical point in } 0<r<1,
\]
and$
\frac{\mathrm{d}}{\mathrm{d}r}\left\{ r^{4/(p_i-1)}\overline v_i(r)\right\}\Big|_{r=1}=0,$
where $\tilde G_i=G_{i,\mu_i}$, $\tilde \kappa_i(y)=\kappa_i(\mu_i y)$,
$\tilde h_i(x)=\mu_i^{4/(p_i-1)}h_i(\mu_i x)
$ and $\overline v_i(r)=|\pa B_r|^{-1}\int_{\pa B_r}v_i$.
Therefore, $0$ is an isolated simple blow up point of $\{v_i\}$.

\textbf{Claim.} We have
\be \label{eq:converg2}
v_i(0) v_i (x) \to \frac{ac_{n}}{|x|^{n-4}} + ac_{n} \quad \mbox{in }C^3_{loc}(\R^n \setminus \{0\}),
\ee
where $a>0
$ is given in \eqref{eq:number a}.

First of all, by Proposition \ref{prop:upbound2} we have $\tilde h_i(e)\le v_i(e)\le C v_i(0)^{-1}$ for any $e\in \mathbb{S}^{n-1}$, where $C>0$ is independent of $i$. It follows from the assumption \eqref{H} on $h_i$ that
\[
v_i(0)\tilde h_i(x)\le C \quad \mbox{for all } |x|\le 2/\mu_i
\] and
\[
\|\nabla (v_i(0)\tilde h_i)\|_{L^\infty(B_{\frac{1}{9\mu_i}})} \le \mu_i \|v_i(0)\tilde h_i\|_{L^\infty(B_{\frac{1}{4\mu_i}})} \le C\mu_i.
\] Hence, for some constant $c_0\ge 0$, we have, along a subsequence,
\[
\lim_{i\to \infty}\|v_i(0)\tilde h_i(x)-c_0\|_{L^\infty(B_t)} =0, \quad \forall~t>0.
\]

Secondly, by Remark \ref{rem:convergence} and Proposition \ref{prop:upbound2} we have, up to a subsequence,
\be \label{eq:iso to isos1}
v_i(0)\int_{B_t}\tilde G_{i}(x,y)\tilde \kappa_i(y)^{\tau_i} v_i(y)^{p_i}\,\ud y \to \frac{a c_n}{|x|^{n-4}} \quad \mbox{in }C^3_{loc}(B_t \setminus \{0\}) \mbox{ for any } t>0,
\ee
where we used that $\tilde G_i(x,0)\to c_{n}|x|^{4-n}$.
Notice that for any $x\in B_{t/2}$
\[
Q''_i(x):=  \int_{B_{3/\mu_i}\setminus B_t}
\tilde G_{i}(x,y)\tilde \kappa_i(y) ^{\tau_i}v_i(y)^{p_i}\,\ud y \le C(n, A_1)\max_{\pa B_t} v_i.
\]
Since $\max_{\pa B_t} v_i\le Ct^{4-n} v_i(0)^{-1}$,  we have as in the proof of \eqref{eq:phigeos}, after passing to a subsequence,
\[
v_i(0) Q''_i(x)\to q(x) \quad \mbox{in }C^3_{loc}(B_t) \quad \mbox{as }i\to \infty
\] for some $q\in C^3(B_t)$. For any fixed large $R>t+1$,  it follows from \eqref{eq:iso to isos1} that \[
v_i(0)\int_{t \le |y|\le R}
\tilde G_{i}(x,y)\tilde \kappa_i(y)^{\tau_i}v_i(y)^{p_i}\,\ud y  \to 0
\]
as $i\to \infty$, since the constant $a$ is independent of $t$.  By the assumption \eqref{G} on $G_i$,  for any $x\in B_t$ and $|y|>R$, we have
\[
|\nabla_x \tilde G_{i}(x,y)| \le A_1 |x-y|^{3-n} \le  \frac{A_1}{R-t} |x-y|^{4-n} \le  \frac{A_1^2}{R-t}  \tilde G_{i}(x,y).
\]
Therefore, we have $
|\nabla q(x)| \le \frac{A_1^2}{R-t} q(x).$
By sending $R\to \infty$, we have $|\nabla q(x)| \equiv 0$ for any $x\in B_t$. Thus,
\[
q(x)\equiv q(0)\quad \mbox{for all } x\in B_t.
\]
 Since
\[
\frac{\mathrm{d}}{\mathrm{d}r}\left\{ r^{4/(p_i-1)}v_i(0)\overline v_i(r)\right\}\Big|_{r=1}=
v_i(0)\frac{\mathrm{d}}{\mathrm{d}r}\left\{ r^{4/(p_i-1)}\overline v_i(r)\right\} \Big|_{r=1}=0,
\]
we have, by choosing, for example, $t=2$ and sending $i$ to $\infty$, that
\[
q(0)+c_0=ac_{n}>0.
\]
Therefore, \eqref{eq:converg2} is proved.

It follows from \eqref{eq:converg2} and Lemma \ref{lem:test-poho} that
\[
\liminf_{i\to \infty} v_i(0)^2\mathcal{P}(r,v_i)=-(n-4)^2(n-2)a^2c_n^2 |\mathbb{S}^{n-1}|<0\quad \mbox{for all }0<r<1.
\]

On the other hand, by \eqref{eq:Q-sub} $v_i$ satisfies
\[
P_{\tilde g_i} v_i=c(n)\tilde \kappa_i^{\tau_i}v_i^{p_i} \quad \mbox{in }B_{3/\mu_i},
\]where $\tilde g_i(z)=g_i(\mu_i z)$. It is easy to see that \eqref{eq:IE-cond} is still correct with $G_i$ replaced by $\tilde G_i$. If $n\le 9$ or $g_i$ is flat, it follows from Proposition \ref{prop:one-side}  that
\be \label{eq:another-side}
\liminf_{r\to 0}\liminf_{i\to \infty} v_i(0)^2\mathcal{P}(r,v_i)\ge 0.
\ee
If $n \ge 10$, by \eqref{eq:IE-cond}, we have
\begin{align*}
U_{\lda}(x)&=\int_{B_{3/\mu_i}} G_{i,\mu_i}(x,y)\{U_\lda(y)^{\frac{n+4}{n-4}}+\mu_i^4 c_{\lda/\mu_i,i}'(\mu_i y) U_{\lda}(y)\}\,\ud y+\mu_i^{\frac{n-4}{2}}c_{\lda/\mu_i,i}''(\mu_ix) \\& = \int_{B_3} \tilde G_i(x,y) \{U_\lda(y)^{\frac{n+4}{n-4}}+ \tilde c_{\lda,i}'(y) U_{\lda}(y)\}\,\ud y+\tilde c_{\lda,i}''(x)   \quad \forall ~\lda\ge 1 ,~x\in B_3,
\end{align*}
where $c_{\lda,i}'(y):=\mu_i^4 c_{\lda/\mu_i,i}'(\mu_i y) $ and
\[
\tilde c_{\lda,i}''(x) = \int_{B_{3/\mu_i}\setminus B_3} G_{i,\mu_i}(x,y)\{U_\lda(y)^{\frac{n+4}{n-4}}+\mu_i^4 c_{\lda/\mu_i,i}'(\mu_i y) U_{\lda}(y)\}\,\ud y+\mu_i^{\frac{n-4}{2}}c_{\lda/\mu_i,i}''(\mu_ix).
\]
By the assumptions for $c'_{\lda,i}$ and $c''_{\lda,i}$, we have
\[
\tilde \Theta_i:=\sum_{i=0}^5 \|\lda^{-k} \nabla^k \tilde c'_{\lda, i}\|_{L^\infty(B_3)} \le \mu_i^4\Theta_i,
\]
and $\|\tilde c_{\lda,i}''\|_{C^5(B_2)}\le CA_2 \lda^{\frac{4-n}{2}}$, where $C>0$ depends only on $n,A_1,A_2$. Clearly, we have $|W_{\tilde g_i}(0)|^2=\mu_i^4 |W_{ g_i}(0)|^2$. Hence \eqref{eq:r2} is satisfied. By (ii) of Proposition \ref{prop:one-side}, we also have \eqref{eq:another-side}. We obtain a contradiction.

Therefore, $0$ must be an isolated simple blow up point of $u_i$
and the proof is completed.
\end{proof}

\begin{lem}\label{lem:isolated to isolated simple} Let $0\le u_i\in C^4(B_3)$ solve both \eqref{eq:s1'} and \eqref{eq:Q-sub} with $n\ge 10$,  and assume \eqref{eq:IE-cond} holds. For $\mu_i\to 0$, let
\[
v_i(x)=\mu_i^{\frac{4}{p_i-1}} u_i(\mu_i x).
\]
Suppose that $0$ is an isolated blow up point of $\{v_i\}$ and \eqref{eq:r2} holds. Then $0$ is also an isolated simple blow up point.
\end{lem}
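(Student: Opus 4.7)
The plan is to apply Proposition \ref{prop:isolated to isolated simple}(iii) directly to the sequence $\{v_i\}$. The main task is to verify that each hypothesis of that proposition transfers under the rescaling $v_i(x)=\mu_i^{4/(p_i-1)}u_i(\mu_i x)$.

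First I would derive the rescaled equations satisfied by $v_i$. The change of variables $y=\mu_i z$ in \eqref{eq:s1'} gives
\[
v_i(x)=\int_{B_{3/\mu_i}}\tilde G_i(x,z)\,\tilde\kappa_i(z)^{\tau_i}v_i(z)^{p_i}\,\ud z+\tilde h_i(x),
\]
where $\tilde G_i(x,z)=\mu_i^{n-4}G_i(\mu_i x,\mu_i z)$, $\tilde\kappa_i(z)=\kappa_i(\mu_i z)$, and $\tilde h_i(x)=\mu_i^{4/(p_i-1)}h_i(\mu_i x)$; analogously $v_i$ satisfies $P_{\tilde g_i}v_i=c(n)\tilde\kappa_i^{\tau_i}v_i^{p_i}$ on $B_{3/\mu_i}$ with $\tilde g_i(x)=g_i(\mu_i x)$. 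The rescaled metric still has unit determinant, $B_3$ remains a normal chart at $0$, and $\|\tilde g_i\|_{C^{10}(B_3)}\le\|g_i\|_{C^{10}(B_3)}\le A_1$ since $\mu_i<1$.

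Next I would verify the structural hypotheses. The bounds \eqref{G} on $\tilde G_i$, \eqref{K} on $\tilde\kappa_i$, \eqref{H} on $\tilde h_i$, and the condition $\nabla\tilde\kappa_i(0)=\mu_i\nabla\kappa_i(0)=0$ from \eqref{eq:56} are all scale invariant in the sense that they are inherited directly from the corresponding properties of $G_i,\kappa_i,h_i$ and $\nabla\kappa_i(0)$. The hypothesis \eqref{eq:IE-cond} for $\tilde G_i$ follows exactly as in the last paragraph of the proof of Proposition \ref{prop:isolated to isolated simple}: substituting $\lda\mapsto\lda/\mu_i$ together with $x\mapsto\mu_i x$, $y\mapsto\mu_i y$ into \eqref{eq:IE-cond} for $G_i$ and using $U_\lda(\mu_i y)=\mu_i^{(n-4)/2}U_{\lda\mu_i}(y)$ gives \eqref{eq:IE-cond} for $\tilde G_i$ with $\tilde c'_{\lda,i}(y)=\mu_i^4 c'_{\lda/\mu_i,i}(\mu_i y)$, while the contribution from $\{|y|>3\}$ together with $\mu_i^{(n-4)/2}c''_{\lda/\mu_i,i}(\mu_i x)$ is absorbed into $\tilde c''_{\lda,i}(x)$; the bounds $\tilde\Theta_i\le\mu_i^4\Theta_i\le A_2$ and $\|\tilde c''_{\lda,i}\|_{C^5(B_2)}\le CA_2\lda^{(4-n)/2}$ follow by a direct chain-rule computation.

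Finally, since $|W_{\tilde g_i}(0)|^2=\mu_i^4|W_{g_i}(0)|^2$, the hypothesis \eqref{eq:r2}, read as the inequality $|W_{\tilde g_i}(0)|^2>C^*\tilde{\mathcal{G}}_i\tilde\beta_i$ for the rescaled data of $v_i$, provides exactly the Weyl-tensor condition required by Proposition \ref{prop:isolated to isolated simple}(iii); applying that proposition to $\{v_i\}$ yields that $0$ is an isolated simple blow-up point of $\{v_i\}$. The main obstacle is the bookkeeping of the scaling exponents in $\tilde{\mathcal{G}}_i$ (built from $\tilde\Theta_i$ and $\|\nabla^k\tilde g_i\|$) and in $\tilde\beta_i$ (built from $\tilde m_i=v_i(0)=\mu_i^{4/(p_i-1)}m_i$): one must confirm that the rescaled \eqref{eq:r2} is genuinely the inequality assumed in the lemma, and that the constant $C^*$ produced by Proposition \ref{prop:isolated to isolated simple} can be chosen uniformly in the scaling parameter $\mu_i$.
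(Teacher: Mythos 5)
Your overall route is the paper's route: rescale, check that the structural hypotheses \eqref{p}--\eqref{H}, \eqref{eq:56} and \eqref{eq:IE-cond} transfer to the data of $v_i$ (with $\tilde c'_{\lambda,i}(y)=\mu_i^4c'_{\lambda/\mu_i,i}(\mu_i y)$, $\tilde\Theta_i\le\mu_i^4\Theta_i$, the tail over $B_{3/\mu_i}\setminus B_3$ absorbed into the remainder), and then invoke Proposition \ref{prop:isolated to isolated simple} in its third alternative. This is exactly what the paper does, in one line, by referring to the last paragraph of the proof of Proposition \ref{prop:isolated to isolated simple}; your routine verifications match that paragraph.

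The difficulty is that the one step carrying real content is the one you leave open. The hypothesis \eqref{eq:r2} of the lemma is an inequality for the unscaled data ($W_{g_i}(0)$, $\mathcal{G}_i$ built from $\Theta_i$ and $\|\nabla^k g_i\|$, $\beta_i$ built from the relevant blow-up value), while Proposition \ref{prop:isolated to isolated simple} applied to $\{v_i\}$ needs the corresponding inequality for the rescaled data $\tilde g_i(x)=g_i(\mu_i x)$, $\tilde\Theta_i$, and $\tilde\beta_i$ built from $v_i(0)=\mu_i^{4/(p_i-1)}u_i(0)$. Your phrase ``the hypothesis \eqref{eq:r2}, read as the inequality $|W_{\tilde g_i}(0)|^2>C^*\tilde{\mathcal{G}}_i\tilde\beta_i$ for the rescaled data'' silently replaces the assumed inequality by the needed one, and your closing paragraph concedes that you have not checked that the powers of $\mu_i$ on the two sides actually match. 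That bookkeeping \emph{is} the proof: one must compare $|W_{\tilde g_i}(0)|^2=\mu_i^4|W_{g_i}(0)|^2$ against $\tilde{\mathcal{G}}_i\tilde\beta_i$, using $\tilde\Theta_i\le\mu_i^4\Theta_i$, $\|\nabla^k\tilde g_i\|^l_{L^\infty(B_3)}=\mu_i^{kl}\|\nabla^kg_i\|^l_{L^\infty(B_{3\mu_i})}$ (with $k,l\ge1$) and the behavior of $\beta_i$ under $m_i\mapsto\mu_i^{4/(p_i-1)}m_i$, and conclude that \eqref{eq:r2} for the original data yields the rescaled inequality; this is what the paper means by ``the condition \eqref{eq:r2} is preserved under the scaling,'' citing the computation at the end of the proof of Proposition \ref{prop:isolated to isolated simple}. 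As written, your argument assumes the outcome of that computation rather than performing (or at least citing) it, so it is incomplete precisely where the lemma has content; note also that in the intended application (proof of Theorem \ref{thm:main-c}) the available hypothesis \eqref{c-hy} is phrased in the unscaled quantities, so the ``rescaled reading'' of \eqref{eq:r2} cannot simply be taken as the assumption.
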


\begin{proof} From the end of proof of Proposition \ref{prop:isolated to isolated simple}, we see that the condition \eqref{eq:r2} is preserved under the scaling $v_i(x)=\mu_i^{\frac{4}{p_i-1}} u_i(\mu_i x)$. Hence, the lemma follows from Proposition \ref{prop:isolated to isolated simple}.

\end{proof}

\section{Global analysis and proofs of main theorems}
\label{section:thm1.1}

Let $(M,g)$ be a smooth compact  Riemannian manifold of dimension $n\ge 5$. Suppose \eqref{condition:main2} holds.  Consider the equation
\be \label{eq:sub DE}
P_gu= c(n)u^p, \quad u \ge 0 \quad \mbox{on } M,
\ee
where $1<p\le \frac{n+4}{n-4}$.

As in the second order case \cite{Li-Zhu99}, we have

\begin{prop} \label{prop:reduction} Assume the above. For any given $R>0$ and $0\le \va<\frac{1}{n-4}$, there exist positive constants $C_0=C_0(M,g,R,\va)$, $C_1=C_1(M, g,R,\va)$ such that, for any smooth positive solution of \eqref{eq:sub DE} with
\[
\max_{M} u(X)\ge C_0,
\]
then $\frac{n+4}{n-4}-p<\va$ and  there exists a set of finite distinct points
\[
\mathscr{S}(u):=\{Z_1,\dots,Z_N\}\subset M
\] such that the following statements are true.

(i) Each $Z_i$ is a local maximum point of $u$ and
\[
\overline{\B_{\bar r_i}(Z_i)} \cap \overline{\B_{\bar r_j}(Z_j)}=\emptyset \quad \mbox{for }i\neq j,
\]
where $\bar r_i=R u(Z_i)^{(1-p)/4}$, and $\B_{r_i}(Z_i)$ denotes the geodesic ball in $B_2$ centered at $Z_i$ with radius $\bar r_i$

(ii) For each $Z_i$,
\[
\left\| \frac{1}{u(Z_i)}u\left(\exp_{Z_i}\left(\frac{y}{u(Z_i)^{(p-1)/4}}\right)\right)-\left(\frac{1}{1+ |y|^2}\right)^{\frac{n-4}{2}}\right\|_{C^4(B_{2R})} <\va.
\]

(iii) $u(X)\le C_1 dist_g(X,\{Z_1,\dots,Z_N\})^{-4/(p-1)}$ for all $X\in M$.

\end{prop}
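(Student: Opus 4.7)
The proof combines a Schoen-type greedy selection with the local blow-up analysis of Section~\ref{s:blowup}. Writing the equation in Green's form,
\[
u(X) = c(n)\int_M G_g(X,Y)\,u(Y)^p\,dV_g(Y),
\]
and restricting to any conformal normal chart centered at a fixed point $\bar X\in M$, Corollary~\ref{cor:Q-gf-expansion} shows that the resulting integral kernel satisfies \eqref{G} and the smooth tail coming from the complement of the chart satisfies \eqref{H}. Hence Proposition~\ref{prop:blow up a bubble} and Remark~\ref{rem:blow} are applicable at any point where $u$ is large and locally near-maximal.

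The selection is inductive. Let $Z_1$ be a global maximum of $u$ and set $\bar r_1 := R u(Z_1)^{-(p-1)/4}$. Given $Z_1,\dots,Z_k$, consider the weighted height
\[
F_k(X) := u(X)\,\mathrm{dist}_g(X,\{Z_1,\dots,Z_k\})^{4/(p-1)}.
\]
If $\sup_M F_k \le C_1$, the procedure stops, and this gives (iii). Otherwise pick $Z_{k+1}$ with $F_k(Z_{k+1}) \ge \tfrac12 \sup_M F_k$. A triangle-inequality comparison using the near-maximality of $Z_{k+1}$ shows that $u(X) \le C u(Z_{k+1})$ in a ball of radius a small fraction of $d_k(Z_{k+1}):=\mathrm{dist}_g(Z_{k+1},\{Z_1,\dots,Z_k\})$; in particular $Z_{k+1}$ is a local maximum of $u$, and $\B_{\bar r_{k+1}}(Z_{k+1})$ is disjoint from the earlier balls provided $C_1$ is chosen large enough relative to $R$. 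Since $u\in C^4(M)$ is bounded, all $\bar r_j \ge R\|u\|_{L^\infty(M)}^{-(p-1)/4}>0$, so by compactness of $M$ the procedure terminates after finitely many steps; this proves (i) and (iii).

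For (ii), rescale at each $Z_k$: let $\varphi_k(y) := u(Z_k)^{-1} u(\exp_{Z_k}(u(Z_k)^{-(p-1)/4} y))$. The comparison in the preceding paragraph, combined with (iii), shows that $\varphi_k(y) \le A_3(1+|y|)^{-4/(p-1)}$ on a rescaled ball whose radius is comparable to $R$, so that $0$ is an isolated blow-up point of $\{\varphi_k\}$ as $u(Z_k)\to\infty$. Proposition~\ref{prop:blow up a bubble} then yields $\varphi_k\to U_1$ in $C^3_{loc}(\R^n)$, and the $C^4(B_{2R})$-closeness required in (ii) follows by one additional application of the interior regularity bootstrap used in Proposition~\ref{prop:upbound3} (differentiating the integral equation one more time). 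This gives (ii) once $u(Z_k)\ge C_0$ for a sufficiently large threshold $C_0=C_0(M,g,R,\varepsilon)$.

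Finally, the dichotomy $\tau:=\tfrac{n+4}{n-4}-p<\varepsilon$ is forced because Proposition~\ref{prop:blow up a bubble} requires the compatibility $R_k^{\tau}=1+o(1)$ with $R_k\to\infty$. If instead $\tau\ge\varepsilon>0$ along a sequence of solutions with $\max u\to\infty$, the same rescaling would converge on compact sets to a nonzero nonnegative bounded solution $\varphi_\infty$ of the strictly subcritical Liouville-type integral equation
\[
\varphi_\infty(y)=c_n\int_{\R^n}|y-z|^{4-n}\varphi_\infty(z)^{p}\,dz,
\]
which is incompatible with the decay $\varphi_\infty(y)\le A_3(1+|y|)^{-4/(p-1)}$: since $4/(p-1)>(n-4)/2$ when $p<\tfrac{n+4}{n-4}$, the right-hand side, evaluated at $y=0$, would be infinite. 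The main technical obstacle is the calibration of $C_1$ against the tolerance $\varepsilon$ in the greedy step, so that (i) (disjointness of bubble balls) and (ii) (bubble accuracy on $B_{2R}$) are achieved simultaneously; this calibration relies on the uniform lower bound on $p-1$ assumed in the proposition, which keeps the exponent $4/(p-1)$ bounded.
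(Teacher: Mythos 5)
The paper itself records this proposition with no argument (``standard by now''), so your write-up has to stand on its own; the greedy selection skeleton is in the right family of arguments, but two steps are genuinely wrong or missing as written. First, the local-maximum claim: from near-maximality of $F_k$ at $Z_{k+1}$ you only get $u\le 2(1-\theta)^{-4/(p-1)}\,u(Z_{k+1})$ on a ball of radius $\theta\, d_k(Z_{k+1})$, and a bound $u\le Cu(Z_{k+1})$ with $C>1$ does not make $Z_{k+1}$ a local maximum of $u$ (a maximizer of the weighted quantity $F_k$ may well have $u$ increasing in the directions of the previously selected points, where the weight decreases). This is not cosmetic: statement (i) requires genuine local maxima, and Proposition \ref{prop:blow up a bubble} and Remark \ref{rem:blow}, which you invoke for (ii), take the local-maximum (or at least critical-point with comparable sup) property as a hypothesis for the isolated blow-up structure, so your use of them here is circular. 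The standard repair is to run a compactness/contradiction argument (the proposition is quantitative, so one cannot literally apply a subsequence statement to a single $u$): rescale at the near-maximal point, use the uniform bound together with Proposition \ref{prop:local estimates} and the classification theorem of \cite{CLO}, \cite{Li04}, \cite{Lin} to get $C^2$-closeness to a bubble $U_\lambda(\cdot-x_0)$ with $\lambda$ and $|x_0|$ controlled, and then use the nondegeneracy of the bubble's maximum to re-center $Z_{k+1}$ at a nearby true local maximum (this is exactly the maneuver behind Remark \ref{rem:blow} and the proof of Proposition \ref{prop:ruling out accumulation}). Note also that Proposition \ref{prop:blow up a bubble} needs $R_i^{\tau_i}=1+o(1)$, so the smallness of $\tau=\frac{n+4}{n-4}-p$ must be settled before, not after, you use it for (ii).

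Second, your dichotomy argument for $\tau<\va$ is incorrect as stated. Under the decay $\varphi_\infty(z)\le A_3(1+|z|)^{-4/(p-1)}$ the integral $\int_{\R^n}|z|^{4-n}\varphi_\infty(z)^{p}\,\ud z$ is finite for every $p>1$: in radial variables the integrand is bounded by $Cr^{3}(1+r)^{-4p/(p-1)}$ and $4p/(p-1)>4$, so nothing diverges at $y=0$; moreover that decay bound is not available for the limit of rescalings at a global maximum, where one only knows $\varphi_\infty\le 1=\varphi_\infty(0)$. The correct mechanism is the Liouville-type nonexistence theorem for the subcritical entire problem: if $\tau\ge\va$ along a sequence with $\max_M u_i\to\infty$, rescaling at maximum points (with the same tail/monotonicity treatment as in the proof of Proposition \ref{prop:blow up a bubble}) produces a bounded nonnegative solution of $\varphi_\infty(y)=c_n\int_{\R^n}|y-z|^{4-n}\varphi_\infty(z)^{p_\infty}\,\ud z$ with $\varphi_\infty(0)=1$ and $p_\infty$ strictly below $\frac{n+4}{n-4}$, contradicting \cite{CLO} or \cite{Li04} (equivalently \cite{Lin} in PDE form). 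With these two repairs — and the resulting reordering, since (ii) and the local-max upgrade both pass through the classification step — your disjointness and termination arguments are essentially fine and the scheme can be completed.
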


\begin{rem}\label{rem:isolated on manifolds} On $(M,g)$, we say a point $\bar X\in M$ is an isolated blow up point for $\{u_i\}$ if there exists a sequence $X_i\in M$, where each $X_i$ is a local maximum point for $u_i$ and $X_i\to \bar X$, such that  $u_i(X_i)\to \infty$ as $i\to \infty$ and $u_i(X) \le C dist_g(X,X_i)^{-\frac{4}{p_i-1}} $ in $\B_{\delta}(X_i)$ for some constants $C,\delta>0$ independent of $i$. Under the assumptions that  $u_i$ is a positive solution of $P_{g}u_i=c(n)u_i^{p_i}$ with $0\le \frac{n+4}{n-4}-p_i\to 0$, $\mathrm{Ker} P_g=\{0\}$ and that the Green's function $G_g$ of $P_g$ is positive, it is easy to see that if $X_i\to \bar X\in M$ is an isolated blow up point of $\{u_i\}$, then in conformal normal coordinates centered at $X_i$, $0$ is an isolated blow up point of  $\{\tilde u_i(\exp_{X_i} x)\}$, where the exponential map is with respect to conformal metric $g_i=\kappa_i^{\frac{-4}{n-4}}g$, $\kappa_i>0$ is under control on $M$, and $\tilde u_i=\kappa_i u_i$; see Remark \ref{rem:blow}.
Since in Theorem  \ref{thm:final-a} and the sequel those conditions  will always be assumed,  the notation of isolated simple blow up points on manifolds is understood  in conformal normal coordinates.
\end{rem}

\begin{prop}\label{prop:ruling out accumulation}   If
one of the three cases happens: (i) $(M,g)$ is locally conformally flat, $n\le 9$, or $n\ge 10$ and $|W_g|^2>0$ everywhere,
then, for $\va>0$, $R>1$ and any solution \eqref{eq:sub DE} with $\max_M u>C_0$, we have
\[
|Z_1-Z_2|\ge \delta^*>0 \quad \mbox{for any }Z_1,Z_2\in \mathscr{S}(u), ~Z_1\neq Z_2,
\]
where $\delta^*$ depends only on $M,g$.

\end{prop}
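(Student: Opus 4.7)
Plan. I argue by contradiction: suppose there is a sequence $u_i$ of positive solutions of \eqref{eq:sub DE} with $\max_M u_i\to\infty$ and two points $Z_{1,i},Z_{2,i}\in\mathscr{S}(u_i)$ realizing
\[
\sigma_i:=dist_g(Z_{1,i},Z_{2,i})=\min\{dist_g(Z,Z'):Z\neq Z'\in\mathscr{S}(u_i)\}\to 0.
\]
After relabeling I assume $u_i(Z_{1,i})\ge u_i(Z_{2,i})$. Choose a conformal metric $\tilde g_i=\kappa_i^{-4/(n-4)}g$ with $\det\tilde g_i=1$ in $\tilde g_i$-normal coordinates centered at $Z_{1,i}$ (flat near $Z_{1,i}$ in the LCF case), and write the Paneitz equation for $\tilde u_i:=\kappa_i u_i$ as an integral equation using $G_{\tilde g_i}$: by conformal covariance \eqref{eq:conformal change1} and Corollary \ref{cor:Q-gf-expansion}, $\tilde u_i$ satisfies an equation of the form \eqref{eq:s1'} with kernel $G_i:=c(n)G_{\tilde g_i}$ obeying \eqref{G}, coefficient $\kappa_i^{\tau_i}$ obeying \eqref{K} and \eqref{eq:56}, regular remainder \eqref{H}, and bubble identity \eqref{eq:IE-cond}.

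Rescale at scale $\sigma_i$: set $v_i(x):=\sigma_i^{4/(p_i-1)}\tilde u_i(\exp_{Z_{1,i}}(\sigma_i x))$. The rescaled data still verify \eqref{p}--\eqref{H} and \eqref{eq:IE-cond} uniformly (the same scaling observation made at the end of the proof of Proposition \ref{prop:isolated to isolated simple}); moreover the rescaled Green's constant $c(n)\sigma_i^{n-4}\bar a_i$ and all higher-order geometric corrections to $G_{\tilde g_i}$ (the constant mass term for $n\le 7$, the Weyl logarithmic term for $n=8$, the polynomial term for $n=9$ from Proposition \ref{prop:GM}) pick up additional positive powers of $\sigma_i$ after rescaling and hence vanish in the limit, so the limit kernel is simply $G_\infty(x,y)=c_n|x-y|^{4-n}$ with $C^3_{loc}$ convergence off the diagonal. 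In these coordinates $0$ is a local maximum of $v_i$ with $v_i(0)\to\infty$, and there is $\bar x_i\to\bar x\in\partial B_1$, a local maximum of $v_i$ with $v_i(\bar x_i)\to\infty$ and $v_i(\bar x_i)\le v_i(0)$; by the minimality of $\sigma_i$ no other blow-up points of $v_i$ lie in $B_{3/2}$. Proposition \ref{prop:reduction}(iii) yields $v_i\le C|x|^{-4/(p_i-1)}$ on $B_{1/4}$, so $0$ is an isolated blow-up point of $\{v_i\}$, and Proposition \ref{prop:isolated to isolated simple} (applicable since $n\le 9$ or the rescaled metric is flat) upgrades $0$ to an isolated simple blow-up point.

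By Corollary \ref{cor:convergence} applied to $v_i$,
\[
v_i(0)v_i(x)\longrightarrow \Gamma(x):=ac_n|x|^{4-n}+h(x)\quad\text{in }C^3_{loc}(B_{1/2}\setminus\{0\}),
\]
with $a>0$ given by \eqref{eq:number a} and $h\in C^3(B_{1/2})$ nonnegative. The central claim is $h(0)>0$. Applying the same isolated-simple-blow-up analysis at $\bar x_i$ yields a bubble concentrating there with $L^1$-mass $\int_{B_{1/4}(\bar x_i)}\kappa_i^{\tau_i}v_i^{p_i}\to a/v_i(\bar x_i)$; therefore, for $x$ near $0$,
\[
v_i(0)\int_{B_{1/4}(\bar x_i)}G_i(x,y)\kappa_i(y)^{\tau_i}v_i(y)^{p_i}\,\ud y\longrightarrow ac_n\Bigl(\lim_{i\to\infty}\frac{v_i(0)}{v_i(\bar x_i)}\Bigr)|x-\bar x|^{4-n},
\]
and the ordering $v_i(0)\ge v_i(\bar x_i)$ together with $|\bar x|=1$ forces $h(0)\ge ac_n>0$.

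Finally, I conclude by Pohozaev. From $\Gamma(x)=ac_n|x|^{4-n}+h(0)+O(|x|)$ near $0$, Lemma \ref{lem:test-poho} and the quadratic homogeneity of $\mathcal{P}$ give
\[
\lim_{r\to 0}\mathcal{P}(r,\Gamma)=-(n-4)^2(n-2)\,ac_n\,h(0)\,|\mathbb{S}^{n-1}|<0,
\]
which contradicts Proposition \ref{prop:one-side}(i)---whose hypotheses ($n\le 9$ or $\tilde g_i$ flat) are exactly our standing assumption---asserting $\liminf_{r\to 0}\mathcal{P}(r,\Gamma)\ge 0$. The hard part will be the strict positivity $h(0)>0$: this uses the relabeling $v_i(0)\ge v_i(\bar x_i)$, an independent isolated-simple-blow-up analysis at $\bar x_i$ to extract the correct bubble-mass asymptotics, and the observation that rescaling by $\sigma_i\to 0$ eliminates all geometric mass/Weyl corrections in the limit kernel, allowing the constant term in $\Gamma$ to be identified with the strictly positive $h(0)$ regardless of whether a positive-mass-type theorem is available.
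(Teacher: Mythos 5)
Your strategy coincides with the paper's: rescale by the minimal separation $\sigma_i$ of the points of $\mathscr{S}(u_i)$, show $0$ is an isolated simple blow up point, invoke Corollary \ref{cor:convergence} to get $v_i(0)v_i\to ac_n|x|^{4-n}+h$ with $h(0)>0$ supplied by the second concentration point, and contradict Proposition \ref{prop:one-side}(i) via Lemma \ref{lem:test-poho}. However, there is a genuine gap at the step where you simply assert that $v_i(0)\to\infty$ and $v_i(\bar x_i)\to\infty$. This is not automatic: the balls $\B_{Ru_i(Z)^{-(p_i-1)/4}}(Z)$ are only known to be disjoint, so $\sigma_i$ may be comparable to $Ru_i(Z_{1,i})^{-(p_i-1)/4}$, in which case $v_i(0)=\sigma_i^{4/(p_i-1)}\tilde u_i(Z_{1,i})$ stays bounded (of size $R^{4/(p_i-1)}$) and there is no blow up at scale $\sigma_i$ at all; then neither Proposition \ref{prop:isolated to isolated simple} nor Corollary \ref{cor:convergence} applies and your argument does not start. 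The paper devotes a separate claim, \eqref{eq:9-5}, to exactly this point: if only one of the two rescaled values blows up, Proposition \ref{prop:upbound2} together with the Harnack inequality (Proposition \ref{prop:har}) forces the other to tend to zero, contradicting the lower bound $\varphi_i(\xi_i)\ge R/C$ coming from the disjointness of the selected balls; if neither blows up, the limit solves the global integral equation and has two critical points a unit distance apart (here one must also note that $Z_{2,i}$ is a critical point of $u_i$, not of $\tilde u_i=\kappa_iu_i$, so the second critical point of the limit is obtained only because the rescaled gradient there is $O(\sigma_i)$), contradicting the classification of \cite{CLO}, \cite{Li04}. You need this argument, or an equivalent one, before anything else.

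Two smaller points. First, because of the conformal factor the ordering $u_i(Z_{1,i})\ge u_i(Z_{2,i})$ only yields $v_i(0)\ge v_i(\bar x_i)/C$, and $\bar x_i$ is a local maximum of $v_i$ only after passing to a subsequence via Remark \ref{rem:blow}; also other points of $\mathscr{S}(u_i)$ may well lie in $B_{3/2}$ after rescaling (they are merely at mutual distance $\ge 1$), so the correct statement is the localized bound of Proposition \ref{prop:reduction}(iii) on $B_{3/4}(0)$ and $B_{3/4}(\bar x_i)$, which is what the isolated blow up structure actually requires. Second, your derivation of $h(0)\ge ac_n$ through a full isolated-simple analysis at $\bar x_i$ is workable under the standing hypotheses, but it is heavier than necessary: the paper only uses the local bound $v_i\le Cv_i(\xi_i)$ near $\xi_i$, the bubble lower bound extracted as in Proposition \ref{prop:blow up a bubble}, and Lemma \ref{lem:error}, which avoids having to verify the simple structure at the second point. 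With the missing claim \eqref{eq:9-5} supplied and these adjustments, your proof matches the paper's.
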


\begin{proof} Since \eqref{condition:main2} is assumed, the proof is similar to that of  Proposition 5.2 of Li-Zhu \cite{Li-Zhu99}. Indeed, by the Green representation in conformal normal coordinates we can apply results in previous sections.
\end{proof}

Theorem \ref{thm:energy} is a part of the following theorem.

\begin{thm}\label{thm:final-a} Let $u_i\in C^4(M)$ be a sequences of positive solutions of $P_g u_i=c(n)u_i^{p_i}$ on $M$, where $0\le  (n+4)/(n-4)-p_i \to 0$ as $i\to \infty$. Assume the hypotheses in Proposition \ref{prop:ruling out accumulation}, then
\[
\|u_i\|_{H^2(M)} \le C,
\]
where $C>0$ depending only on $M,g$. Furthermore, after passing to  a subsequence,  $\{u_i\}$ is uniformly bounded or has only isolated simple blow up points and the distance between any two blow up points is bounded below by some positive constant depending only on $M,g$.

\end{thm}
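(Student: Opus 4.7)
The statement is an assembly of the local results from the previous sections into a global one on $M$. If $\sup_i \max_M u_i<\infty$, then the equation $P_g u_i=c(n)u_i^{p_i}$ together with standard elliptic regularity for the fourth order operator $P_g$ gives a uniform $C^{4,\al}$ bound, hence an $H^2$ bound, and there is nothing to prove. So I would pass to a subsequence along which $\max_M u_i\to \infty$ and, for all $i$ large, apply Proposition \ref{prop:reduction} (with a fixed $R$ and a small $\va$) to produce the local-maximum set $\mathscr{S}(u_i)=\{Z_{1,i},\dots,Z_{N_i,i}\}$, around each of which $u_i$ is close to a standard bubble and for which the global upper bound $u_i(X)\le C_1\, dist_g(X,\mathscr{S}(u_i))^{-4/(p_i-1)}$ holds.

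Next I would invoke Proposition \ref{prop:ruling out accumulation} to conclude that any two distinct points of $\mathscr{S}(u_i)$ are separated by a universal $\delta^*>0$, which by compactness of $M$ bounds $N_i$ by a universal $N$. Passing to a further subsequence I may assume $N_i\equiv N$ and $Z_{k,i}\to Z_k^*\in M$ with the $Z_k^*$ pairwise distinct. Working in the conformal normal coordinates at each $Z_{k,i}$, the conformally changed equation fits the framework \eqref{eq:Q-sub}, and by Corollary \ref{cor:Q-gf-expansion} the Green's function satisfies \eqref{eq:IE-cond} with constants independent of $i$, so the rescaled $u_i$ solves an integral equation of the form \eqref{eq:s1'}. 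The local-maximum property of $Z_{k,i}$, the divergence $u_i(Z_{k,i})\to \infty$, and the Proposition \ref{prop:reduction}(iii) bound on $\B_{\delta^*/2}(Z_{k,i})$ show that $Z_{k,i}$ is an isolated blow up point in the sense of Definition \ref{def4.1}. Since we are in the range $n\le 9$ or $(M,g)$ is locally conformally flat, Proposition \ref{prop:isolated to isolated simple} upgrades this to an isolated simple blow up point, giving the second assertion.

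For the $H^2$ bound I would split $M$ as the union of the geodesic balls $\overline{\B_{\delta^*/3}(Z_{k,i})}$ and their complement $\Omega_i$. On $\Omega_i$, Proposition \ref{prop:reduction}(iii) yields $u_i\le C_1(\delta^*/3)^{-4/(p_i-1)}\le C$. On each ball, Corollary \ref{cor:energy} applied (with $s=0$) in the conformal normal coordinates at $Z_{k,i}$ gives $\int_{\B_{\delta^*/3}(Z_{k,i})} u_i^{p_i+1}\,dV_g\le C$ with $C$ independent of $i$. Summing over the at most $N$ balls produces $\int_M u_i^{p_i+1}\,dV_g\le C$. Testing $P_g u_i=c(n)u_i^{p_i}$ against $u_i$, integrating by parts on the closed manifold $M$, and using the explicit form \eqref{Paneitz operator} of $P_g$ together with interpolation to absorb the first and zeroth order terms, I obtain $\int_M (\Delta_g u_i)^2\,dV_g\le c(n)\int_M u_i^{p_i+1}\,dV_g+C\|u_i\|_{L^2}^2\le C$, and Calder\'on--Zygmund on $M$ then yields $\|u_i\|_{H^2(M)}\le C$.

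The main obstacle is the clean verification that, in conformal normal coordinates at a moving sequence of base points $Z_{k,i}\to Z_k^*$, all the quantitative assumptions \eqref{G}, \eqref{K}, \eqref{H}, \eqref{eq:IE-cond}, and \eqref{eq:non-flat} entering Sections \ref{s:blowup}--\ref{section:Q equation blow} hold with constants $A_1,A_2,A_3$ uniform in $i$ and $k$; this is exactly what the preparatory material of Section \ref{section:pre} was arranged to give, so the proof ends up being essentially a bookkeeping of the already-established local results.
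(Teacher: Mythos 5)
Your proposal is correct, and the skeleton for the structural statement is exactly the paper's: Proposition \ref{prop:reduction}, Proposition \ref{prop:ruling out accumulation} and Proposition \ref{prop:isolated to isolated simple} give the universal separation $\delta^*$ and upgrade the blow up points to isolated simple ones (one small label slip: the identity \eqref{eq:IE-cond} with uniform constants is supplied by Corollary \ref{cor:GM2.8}, while Corollary \ref{cor:Q-gf-expansion} is what verifies the third line of \eqref{G}; both are needed, as you say, and both come with constants uniform along the moving base points $Z_{k,i}$). Where you genuinely diverge is the passage from the local energy bound to $\|u_i\|_{H^2(M)}\le C$. The paper controls the region away from the blow up points via Proposition \ref{prop:upbound2} together with the Harnack inequality (Proposition \ref{prop:har}), obtains $\int_M u_i^{2n/(n-4)}\,\ud vol_g\le C$, and then gets $H^2$ directly from the Green's representation and standard Riesz potential (Calder\'on--Zygmund/HLS) estimates, i.e.\ from $I_2:L^{2n/(n+4)}\to L^2$ applied to $u_i^{p_i}$. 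You instead bound the complement of the $\delta^*/3$-balls by Proposition \ref{prop:reduction}(iii), sum Corollary \ref{cor:energy} (with $s=0$) over at most $N$ balls to get $\int_M u_i^{p_i+1}\le C$, and then test the equation against $u_i$, integrating by parts and absorbing the second-order divergence term of \eqref{Paneitz operator} by $\int|\nabla u_i|^2\le \va\int(\Delta_g u_i)^2+C(\va)\int u_i^2$ and H\"older ($p_i+1>2$). Both routes are valid; yours is slightly more elementary in that it avoids quoting mapping properties of the Riesz potential at the last step and does not need the sharper decay $u_i\le Cu_i(0)^{-1}dist^{4-n}$ away from the blow up set (a uniform bound there suffices), while the paper's route is shorter to state and yields the stronger critical-exponent bound $\int_M u_i^{2n/(n-4)}\le C$ as an intermediate product, which is also the form used later. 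The remaining work in your plan (uniformity of $A_1,A_2,A_3$, \eqref{K}, \eqref{H}, \eqref{eq:non-flat} in conformal normal coordinates at moving centers, and discarding those $Z_{k,i}$ along which $u_i(Z_{k,i})$ stays bounded) is indeed only bookkeeping already arranged in Section \ref{section:pre} and Remark \ref{rem:isolated on manifolds}.
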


\begin{proof}The last statement follows immediately from Proposition \ref{prop:reduction}, Proposition \ref{prop:ruling out accumulation} and Proposition \ref{prop:isolated to isolated simple}. Consequently,  it follows from \eqref{eq:cor-energy-2} and Lemma \ref{lem:error} and Proposition \ref{prop:upbound2} and Proposition \ref{prop:har} that $\int_{M} u_i^{\frac{2n}{n-4}}\,\ud vol_g\le C$. By the Green's representation and standard estimates for Riesz potential, we have  the $H^2$ estimates.
\end{proof}

\begin{proof}[Proof of Theorem \ref{thm:main theorem}]
By Proposition \ref{prop:reduction}, it suffices to consider that $p$ is close to $\frac{n+4}{n-4}$.  suppose by contradiction that there exists  a sequences of positive solutions $u_i\in C^4(M)$ of $P_g u_i=c(n)u_i^{p_i}$ on $M$, where $p_i\to (n+4)/(n-4)$ as $i\to \infty$, such that $\max_{M}u_i\to \infty$. By Theorem  \ref{thm:final-a}, let $X_i\to \bar X\in M$ be an isolated simple blow up point of $\{u_i\}$; see Remark \ref{rem:isolated on manifolds}.

If $n\ge 8$ and $|W_g|^2>0$ on $M$, we immediately obtain a contradiction to item (iii) of Proposition \ref{prop:one-side}.
Next, we consider $n\le 9$ or $(M,g)$ is locally conformally flat.

It follows from Proposition \ref{prop:one-side} that, in the $g_{\bar X}$-normal coordinates centered at $\bar X$,
\[
\liminf_{r\to 0} \mathcal{P}(r, c(n)G)\ge 0,
\]
where $g_{\bar X}$ a conformal metric of $g$ with $\det g_{\bar X}=1$ in an open ball $B_{\delta}$ of the $g_{\bar X}$-normal coordinates,  $G(x)=G_{g_{\bar X}}(\bar X,\exp_{\bar X} x)$ and $G_{g_{\bar X}}$ is the Green's function of $P_{g_{\bar X}}$.
On the other hand, if $n=5,6,7$ or $(M,g)$ is locally conformally flat, by Theorem \ref{thm:positive mass} and Lemma \ref{lem:test-poho} we have
\[
\mathcal{P}(r, c(n)G) <-A \quad \mbox{for small }r,
\]
where $A>0$ depends only on $M,g$. We obtain a contradiction.

If $n=8,9$, we have $W_g(\bar X)=0$. In view of Remark \ref{rem:positive mass},  we have
\[
\lim_{r\to 0}\mathcal{P}(r,G)=\begin{cases} -2 \dashint_{\mathbb{S}^{n-1}} \psi(\theta), &\quad n=8, \\
-\frac{5}{2}A, &\quad n=9,
\end{cases}
\]
where $\psi(\theta)$ and $A$ are as in Remark \ref{rem:positive mass}.
If the positive mass type theorem holds for Paneitz operator in dimension $n=8,9$, we obtain $\lim_{r\to 0}\mathcal{P}(r,G)<0$. Again, we derived  a contradiction.

Therefore, $u_i$ must  be uniformly bounded and the proof is completed.
\end{proof}

\appendix

\section{Local estimates for solutions of linear integral equations}

Let $\om_2\subset \subset  \om_1$ be a bounded open set in $\R^n$, $n\ge 5$. For $x,y\in \om_1 $, let $G(x,y)$ satisfy \eqref{G} with $B_3$ replaced by $\om_1$
and let  $0\le h\in C^4( \om_1)$ satisfy
\be \label{apdx:H}
\sup_{ \om_2} h \le A_2 \inf_{\om_2} h
\quad \mbox{and} \quad
\sum_{j=1}^4r^j|\nabla^j h(x)| \le A_2 \|h\|_{L^\infty(B_r(x))}
\ee
for all $x\in \om_2$ and $0<r<dist(\om_2,\pa\om_1)$.  We recall some local estimates for solutions  of the integral equation
\be \label{apdx:main}
u(x)=\int_{\om_1} G(x,y)V(y)u(y)\,\ud y+h(x) \quad \mbox{for } x\in \om_1.
\ee

\begin{prop}\label{prop:har} Assume as above.  Let $0\le V\in L^{\infty}(\om_1)$, and let $0\leq u\in C^0(\om_1)$ be a solution of \eqref{apdx:main}.
Then  we have
\[
\sup_{\om_2} u\le C\inf_{\om_2} u,
\]
where $C>0$ depends only on $n, A_1, A_2$, $\om_1$, $\om_2$ and $\|V\|_{L^\infty(\om_1)}$.
\end{prop}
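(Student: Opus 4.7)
The plan is to exploit the two-sided bound $A_1^{-1}|x-y|^{4-n}\le G(x,y)\le A_1|x-y|^{4-n}$ on the kernel together with the Harnack-type hypotheses \eqref{apdx:H} on $h$. The lower-bound half of the assertion is immediate from positivity: since $G,V,u\ge 0$, \eqref{apdx:main} gives $u\ge h$ on $\om_1$, whence $\inf_{\om_2}u\ge \inf_{\om_2}h\ge A_2^{-1}\sup_{\om_2}h$ by \eqref{apdx:H}. It therefore suffices to bound $\sup_{\om_2}u$ above by $C\inf_{\om_2}u$.

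For the upper bound I would fix $x_1,x_2\in\om_2$ and split $\om_1=(B_{r_0}(x_1)\cup B_{r_0}(x_2))\cup \om_1^{\rm far}$ with $r_0:=\tfrac18\,\mathrm{dist}(\om_2,\pa\om_1)$. On $\om_1^{\rm far}$, both $|x_1-y|$ and $|x_2-y|$ lie between $r_0$ and $\mathrm{diam}(\om_1)$, so the two-sided kernel bound yields $G(x_1,y)\le C_0\,G(x_2,y)$ for a constant $C_0=C_0(n,A_1,\om_1,\om_2)$, and hence
\[
\int_{\om_1^{\rm far}}G(x_1,y)V(y)u(y)\,\ud y
\le C_0\int_{\om_1^{\rm far}}G(x_2,y)V(y)u(y)\,\ud y
\le C_0\,u(x_2).
\]
On each near piece $B_{r_0}(x_j)$ the upper kernel bound and integrability of $|\cdot|^{4-n}$ give
\[
\int_{B_{r_0}(x_j)}G(x_1,y)V(y)u(y)\,\ud y
\le C(n)A_1\|V\|_\infty r_0^{4}\sup_{B_{r_0}(x_j)}u.
\]
Substituting into \eqref{apdx:main} and using $h(x_1)\le A_2\inf_{\om_2}h\le A_2\,u(x_2)$ yields the localized estimate
\[
u(x_1)\le(C_0+A_2)\,u(x_2)+C(n)A_1\|V\|_\infty r_0^{4}\bigl(\sup_{B_{r_0}(x_1)}u+\sup_{B_{r_0}(x_2)}u\bigr).
\]

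To convert this into a global Harnack inequality I would apply the estimate uniformly over a slightly shrunken subdomain $\om_2'\subset\subset\om_2$ on which all balls $B_{r_0}(x)$ still sit inside a fixed neighborhood of $\om_2$, take suprema, and choose $r_0$ small enough (depending only on $n,A_1,\|V\|_\infty$) that the near terms can be absorbed on the left; a standard finite chaining argument over $\om_2$ then propagates the bound from $\om_2'$ to all of $\om_2$. The main technical difficulty will be this absorption step, since the near-ball sups sit on a slightly enlarged set rather than on $\om_2$ itself; however, the smallness of $r_0$ together with a finite covering controls the iteration, and the resulting constant depends only on the parameters listed in the statement. The overall scheme mirrors Proposition 2.15 of \cite{JLX3}.
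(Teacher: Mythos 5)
Your opening steps are sound: the far-field comparison $G(x_1,y)\le C_0G(x_2,y)$ off the two balls, the bound $h(x_1)\le A_2\inf_{\om_2}h\le A_2u(x_2)$, and the near-field bound by $C(n)A_1\|V\|_\infty r_0^4\sup_{B_{r_0}(x_j)}u$ are all correct, and (as the paper remarks) the third line of \eqref{apdx:G} is indeed not needed. But the paper's own proof is only a pointer to Proposition 2.3 of \cite{JLX3} (not Proposition 2.15, which is the Pohozaev identity quoted here as Proposition \ref{prop:pohozaev}), so the whole burden of the statement sits exactly on the step you leave open, and as sketched it does not close. With a fixed radius $r_0$, each application of your localized estimate bounds $u$ at a point by $\varepsilon\,\sup_{B_{r_0}(\cdot)}u$ with $\varepsilon\sim r_0^4$, and the ball always pokes out of whatever set you are taking the supremum over; chaining such estimates can be done at most $N\sim \mathrm{dist}(\om_2,\pa\om_1)/r_0$ times before you reach points where neither $u$ nor $h$ is under control, and you are left with an unabsorbed remainder $\varepsilon^N\sup_K u$ over a strictly larger compact set $K$. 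That remainder cannot be turned into $C\inf_{\om_2}u$ with a constant independent of $u$: shrinking $r_0$ to kill $\varepsilon^N$ blows up the far-field constant $C_0\sim r_0^{4-n}$, and choosing $r_0$ depending on $u$ is not allowed. Passing to $\om_2'\subset\subset\om_2$ and ``chaining over $\om_2$'' does not help, because chaining only compares values at different base points, which your far-field term already handles; the obstruction is the near-field mass, not the distance between $x_1$ and $x_2$.

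Two standard repairs exist, and either would complete your argument. (i) Control the near field by total mass instead of a local sup: since $G\ge A_1^{-1}(\mathrm{diam}\,\om_1)^{4-n}$, equation \eqref{apdx:main} gives $\int_{\om_1}Vu\,\ud y\le A_1(\mathrm{diam}\,\om_1)^{n-4}\inf_{\om_2}u$; writing $Tf=\int_{\om_1}G(\cdot,y)V(y)f(y)\,\ud y$ and expanding $u=\sum_{k=0}^{m-1}T^kh+T^mu$ with $4m>n$, the $m$-fold iterated kernel is bounded (each composition with $|x-y|^{4-n}$ gains four powers of $|x-y|$), so $T^mu\le C\int_{\om_1}Vu\le C\inf_{\om_2}u$, while each $T^kh$ on $\om_2$ is handled by splitting at scale $\mathrm{dist}(\om_2,\pa\om_1)/2$ and using the Harnack control of $h$ near $\om_2$ together with the mass bound. (ii) Alternatively, make the absorption scale-adapted: on the fixed compact set $K=\{\mathrm{dist}(\cdot,\om_2)\le d/2\}$, $d=\mathrm{dist}(\om_2,\pa\om_1)$, apply the localized estimate at $x\in K$ with radius $\theta\,\mathrm{dist}(x,\pa K)$, so the ball stays inside $K$; then $m(s):=\sup\{u(x):x\in K,\ \mathrm{dist}(x,\pa K)\ge s\}$ is finite and satisfies $m(s)\le C\theta^{4-n}s^{4-n}\inf_{\om_2}u+C\inf_{\om_2}u+C\theta^4\,m(s/2)$, a recursion that can be iterated indefinitely at a fixed small $\theta$ (chosen from $n,A_1,A_2,\|V\|_\infty,\mathrm{diam}\,\om_1$ only), so the leftover $(C\theta^4)^Km(s/2^K)$ genuinely vanishes and $\sup_{\om_2}u\le m(d/2)\le C\inf_{\om_2}u$. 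In either route you must evaluate the estimate, and control $h$, at points outside $\om_2$, so you need the ball form of the hypothesis as in \eqref{H} (namely $\max_{\bar B_r(x)}h\le A_2\min_{\bar B_r(x)}h$ for $x\in\om_2$, $0<r<d$), not only the single inequality $\sup_{\om_2}h\le A_2\inf_{\om_2}h$ that your display invokes; without control of $h$ on a full neighborhood of $\om_2$, a large bump of $h$ placed in $\om_1$ just outside $\pa\om_2$ makes the ratio $\sup_{\om_2}u/\inf_{\om_2}u$ arbitrarily large, so no argument using only the weaker form can succeed.
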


\begin{proof} It follows from some simple modification of the proof of Proposition 2.3  of \cite{JLX3}. In fact, the third line of \eqref{G} is not needed.

\end{proof}

\begin{prop}\label{prop:local estimates} Suppose the hypotheses in Proposition \ref{prop:har}. Then $u\in C^3(B_2)$ and
\[
\|u\|_{C^3(\om_2 )} \le C\|u\|_{L^\infty(\om_1)},
\]
where $C>0$ depends only on $n, A_1,A_2$, the volume of $\om_2$, $dist(\om_2,\pa \om_1)$ and $\|V\|_{L^\infty(\om_1)}$.

If $V\in C^{1}(\om_1)$, then $u\in C^{4}(\om_2)$ and for any $\om_3\subset \subset \om_2$ we have
\[
\|\nabla ^4u\|_{L^\infty(\om_3)} \le C\|u\|_{L^\infty(\om_1)},
\]
where $C>0$ depends only on $n, A_1,A_2$, the volume of $\om_2$, $dist(\om_2,\pa \om_1)$, $dist(\om_3,\pa \om_2)$ and $\|V\|_{C^{1}(B_3)}$.

\end{prop}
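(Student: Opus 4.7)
The plan is to treat \eqref{apdx:main} as a Riesz-potential equation of order four and to exploit the pointwise kernel bounds in \eqref{apdx:G} together with the mean-value style estimate \eqref{apdx:H}. The argument proceeds in two nested steps, where the second step uses the conclusion of the first to make sense of a crucial integration by parts.

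For the $C^3$ estimate, fix $x_0 \in \omega_2$ and set $r := \tfrac12 \mathrm{dist}(\omega_2, \partial\omega_1)$. For $x \in B_{r/2}(x_0)$ I would split the right-hand side of \eqref{apdx:main} into a near-field integral over $B_r(x_0)$ and a far-field integral over $\omega_1 \setminus B_r(x_0)$. The far-field piece satisfies $|x-y| \ge r/2$ and is therefore $C^\infty$ in $x$, with any number of derivatives controlled by $A_1$, $r$, $\|V\|_{L^\infty}$ and $\|u\|_{L^\infty(\omega_1)}$. In the near-field piece, the bound $|\nabla_x^l G(x,y)| \le A_1|x-y|^{4-n-l}$ is locally integrable for every $l \le 3$ (exponent strictly greater than $-n$), so differentiation under the integral sign up to order three is justified, and the resulting integrals are bounded by $C(A_1, A_2, \|V\|_{L^\infty})\|u\|_{L^\infty(\omega_1)}$. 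Combined with the $C^4$ bound on $h$ extracted from \eqref{apdx:H} and a finite cover of $\omega_2$ by balls $B_{r/2}(x_0)$, this delivers the first estimate.

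For the $C^4$ estimate when $V \in C^1$, the previous step makes $f := Vu$ lie in $C^1(\omega_2)$ with $\|f\|_{C^1(\omega_2)} \le C\|u\|_{L^\infty(\omega_1)}$. Using the third line of \eqref{apdx:G} I would decompose $G(x,y) = c_n|x-y|^{4-n} + \tilde G(x,y)$, where $\tilde G$ gathers the terms carrying either an extra factor $O^{(4)}(|x|^2 + |y|^2)$ multiplying $|x-y|^{4-n}$ or the milder kernel $O^{(4)}(|x-y|^{6-n})$. A direct computation using the definition of $O^{(4)}$ shows that $|\nabla_x^4 \tilde G(x,y)| \le C|x-y|^{2-n}$, which is locally integrable, so the contribution of $\tilde G$ to $\nabla_x^4 u$ is controlled by $\|f\|_{L^\infty}$ after covering $\omega_3$ by finitely many balls.

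The principal (and technically delicate) piece is $c_n \int_{B_r(x_0)} |x-y|^{4-n} f(y)\,\ud y$, on which four naive $x$-derivatives would produce the critical non-integrable kernel $|x-y|^{-n}$. The key maneuver is a single integration by parts in $y$ using $\partial_{x_i}|x-y|^{4-n} = -\partial_{y_i}|x-y|^{4-n}$, which trades one $x$-derivative for a $y$-derivative acting on $f$ and leaves a boundary integral over $\partial B_r(x_0)$ that is smooth in $x \in B_{r/2}(x_0)$. Since $\partial_{y_i}f \in L^\infty(\omega_2)$, the remaining three $x$-derivatives produce kernels of order $|x-y|^{4-n-k}$ for $k \le 3$ (worst case $|x-y|^{1-n}$), all locally integrable, yielding $\|\nabla^4 u\|_{L^\infty(\omega_3)} \le C\|u\|_{L^\infty(\omega_1)}$ with $C$ depending on $\|V\|_{C^1}$. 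The main obstacle is precisely this single integration by parts: it is where the hypothesis $V \in C^1$ and the $C^3$ bound obtained in the first step interlock to absorb the critical singularity of the Riesz-type kernel, and explains why the two regularity thresholds in the statement are tied together.
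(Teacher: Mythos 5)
Your overall architecture is sound and, for the main difficulty, essentially the paper's: for $k\le 3$ one differentiates under the integral using $|\nabla_x^k G(x,y)|\le A_1|x-y|^{4-n-k}$, and for the fourth derivative one exploits $f:=Vu\in C^1(\om_2)$ together with $\partial_{x_j}|x-y|^{4-n}=-\partial_{y_j}|x-y|^{4-n}$. The paper implements this by subtracting $f(x)$ inside the integral and converting the $f(x)$-piece into a boundary integral over $\pa\om_2$ plus milder kernels, whereas you integrate by parts once before taking the remaining three derivatives; these are equivalent standard devices. (Two small points you leave implicit: you need $0\le h\le u$, which follows from the nonnegativity of $G,V,u$, before \eqref{apdx:H} converts bounds on $\|h\|_{L^\infty}$ into bounds on $\nabla^j h$ by $\|u\|_{L^\infty(\om_1)}$; and the near-field ball for the $C^4$ step must be contained in $\om_2$, where $f$ is $C^1$, which your choice of radii allows.)

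There is, however, a concrete false step in your treatment of the remainder: the claimed bound $|\nabla_x^4\tilde G(x,y)|\le C|x-y|^{2-n}$ does not hold. By the third line of \eqref{apdx:G}, $\tilde G$ contains the terms $c_n\,O^{(4)}(|x|^2)\,|x-y|^{4-n}$ and $c_n\,O^{(4)}(|y|^2)\,|x-y|^{4-n}$. Four $x$-derivatives of these produce, among others, $O(|x|^2)\,\nabla_x^4|x-y|^{4-n}$ and $O(|y|^2)\,\nabla_x^4|x-y|^{4-n}$, i.e. kernels of size $|x|^2|x-y|^{-n}$ and $|y|^2|x-y|^{-n}$. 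Here $|x|,|y|$ measure distance to the origin of the coordinate chart, not to the diagonal, so near $y=x$ with $x$ away from the origin these are exactly as non-integrable as $\nabla_x^4|x-y|^{4-n}$ itself; only the $O^{(4)}(|x-y|^{6-n})$ part of $\tilde G$ has integrable fourth derivatives. Consequently the contribution of $\tilde G$ cannot be controlled by $\|f\|_{L^\infty}$ alone, as you assert. The gap is repairable within your scheme: absorb the $O^{(4)}(|y|^2)$ factor into the density (it is $C^4$, so $O^{(4)}(|y|^2)f(y)$ is still $C^1$ with norm bounded by $C\|f\|_{C^1}$) and run the same integration by parts, and pull the $O^{(4)}(|x|^2)$ factor out by Leibniz, using the bounds already obtained for $\nabla_x^k\int|x-y|^{4-n}f\,\ud y$, $k\le 4$. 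The paper sidesteps this issue by never isolating a "mild" remainder: it keeps the full kernel and uses the identity $\nabla_xG=-\nabla_yG+(O^{(3)}(|x|)-O^{(3)}(|y|))|x-y|^{4-n}+O^{(3)}(|x-y|^{5-n})$ in combination with the $f(y)-f(x)$ subtraction, so all the critical terms are handled at once.
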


\begin{proof} Let $f:=Vu$. If $k<4$,  we have
\[
\nabla^k u(x) =\int_{\om_1} \nabla^k_xG(x,y)f(y)\,\ud y+\nabla^kh(x) \quad \mbox{for }x\in \om_2,
\]
and thus
\begin{align*}
|\nabla^k u(x)| &\le A_1 \|f\|_{L^\infty(\om_1)} \int_{\om_1} |x-y|^{n-4+k}\,\ud y +|\nabla^k h(x)|  \\&
\le C( \|u\|_{L^\infty(\om_1)} +\|h\|_{L^\infty(\om_1)}).
\end{align*}
Since $u$ and $V$ are nonnegative, we have $0\le h(x)\le u(x)$. We proved the first conclusion.

Let $\om_3\subset \subset \om_2$.  Without loss of generality, we may assume $\pa \om_2\in C^1$. If $V\in C^1(\om_1)$, we have $f\in C^1(\bar \om_2)$. By the third line of \eqref{G}, we see
\[
\nabla_x G(x,y)=-\nabla_y G(x,y) +(O^{(3)}(|x|)-O^{(3)}(|y|) )|x-y|^{4-n}+O^{(3)}(|x-y|^{5-n}).
\] We have for $x\in \om_3$ and $1\le j\le n$,
\begin{align*}
\nabla_{x_j} \nabla^3 u(x)=& \int_{\om_2} \nabla_{x_j} \nabla_x^3 G(x,y) (f(y)-f(x))\,\ud y -f(x)\int_{\pa \om_{2}}\nabla_y^3 G(x,y) \nu_j\,\ud S(x) \\&
+f(x)O( \int_{\om_2} |x-y|^{1-n}\,\ud y)+  \int_{\om_1\setminus \om_{2}}  \nabla_{x_j} \nabla_x^3 G(x,y) f(y)\,\ud y+ \nabla_{x_j} \nabla^3  h(x),
\end{align*}
where $\nu=(\nu_1,\dots, \nu_n)$ denotes the outward normal to $\pa \om_2$.
By \eqref{G} and \eqref{apdx:H}, the proof follows immediately.
Hence, we complete the proof.

\end{proof}

\small

\bigskip

\noindent Y.Y. Li

\noindent 
 Department of Mathematics, Rutgers University,\\
110 Frelinghuysen Road, Piscataway, NJ 08854, USA\\[1mm]
Email: \textsf{yyli@math.rutgers.edu}

\medskip

\noindent J. Xiong

\noindent School of Mathematical Sciences, Beijing Normal University\\
Beijing 100875, China\\[1mm]
Email: \textsf{jx@bnu.edu.cn}


\begin{thebibliography}{99}

\bibitem{ADN}  Agmon, S.; Douglis, A.; Nirenberg, L.:
              \emph{Estimates near the boundary for solutions of elliptic partial differential equations satisfying general boundary conditions. I.}
              Comm. Pure Appl. Math. \textbf{12} (1959), 623--727.





\bibitem{Be}  Beckner, W.:
             \emph{Sharp Sobolev inequalities on the sphere and the Moser-Trudinger inequality.}
             Ann. of Math. (2) \textbf{138} (1993), no. 1, 213--242.

\bibitem{BPS} Bettiol, R.; Piccione, P.; Sire, Y.:
             \emph{Nonuniqueness of conformal metrics with constant Q-curvature. } Preprint. arXiv:1806.01373.

\bibitem{Bra85} Branson, T.P.:
             \emph{Differential operators canonically associated to a conformal structure.}
             Math. Scand. \textbf{57} (1985), no. 2, 295--345.

\bibitem{Brendle} Brendle, S.:
               \emph{Blow up phenomena for the Yamabe equation.}
               J. Amer. Math. Soc. \textbf{21} (4) (2008), 951--979.

\bibitem{BM} Brendle, S.; Marques, F.C.:
            \emph{Blow up phenomena for the Yamabe equation. II.}
            J. Differential Geom. \textbf{81} (2) (2009), 225--250.

\bibitem{Cao} Cao, J.:
            \emph{The existence of generalized isothermal coordinates for higher-dimensional
            Riemannian manifolds.} Trans. Amer. Math. Soc. \textbf{324} (1991), 901--920.

\bibitem{CY95} Chang, S.-Y., Yang, P.:
             \emph{Extremal metrics of zeta function determinants on 4-manifolds.}
             Ann. of Math. (2) \textbf{142} (1995), no. 1, 171--212.

\bibitem{CY99} -----:
             \emph{On a fourth order curvature invariant.}
             Comp. Math. 237, Spectral Problems in Geometry and Arithmetic, Ed: T. Branson, AMS, 9--28, 1999.

\bibitem{CLO} Chen, W.; Li, C.; Ou, B.:
             \emph{Classification of solutions for an integral equation}.
             Comm. Pure Appl. Math.  \textbf{59} (2006), 330--343.

\bibitem{DDS} D\'avila, J.; del Pino, M.; Sire, Y.:
            \emph{Nondegeneracy of the bubble in the critical case for nonlocal equations.}
            Proc. Amer. Math. Soc. \textbf{141} (2013), 3865--3870.



\bibitem{DM} Djadli, Z.; Malchiodi, A.:
             \emph{Existence of conformal metrics with constant Q-curvature.}
             Ann. of Math. (2) \textbf{168} (2008), 813--858.

\bibitem{DMO} Djadli, Z., Malchiodi, A., Ould Ahmedou, M.:
             \emph{Prescribing a fourth order conformal invariant on the standard sphere.
             II. Blow up analysis and applications.}
              Ann. Sc. Norm. Super. Pisa Cl. Sci. \textbf{5}  (2002), 387--434.


\bibitem{Druet04} Druet, O.:
             \emph{Compactness for Yamabe metrics in low dimensions.}
             Int. Math. Res. Not. \textbf{23} (2004), 1143--1191.

\bibitem{DR} Druet, O.; Robert, F.:
             \emph{Bubbling phenomena for fourth-order four-dimensional PDEs with exponential growth.}
             Proc. Amer. Math. Soc. \textbf{134} (2006), 897--908.


\bibitem{Guther} G\"unther, M.:
            \emph{Conformal normal coordinates.}
            Ann. Global Anal. Geom. \textbf{11} (1993), 173--184.

\bibitem{GHL} Gursky, M.; Hang, F.; Lin, Y.-J.:
             \emph{Riemannian manifolds with positive Yamabe invariant and Paneitz operator.}
              Int. Math. Res. Not. IMRN 2016, no. 5, 1348--1367.

\bibitem{GM} Gursky, M.; Malchiodi, A.:
            \emph{A strong maximum principle for the Paneitz operator and a
            non-local flow for the $Q$-curvature.}
            J. Eur. Math. Soc. (JEMS) \textbf{17} (2015), no. 9, 2137--2173.


\bibitem{HY14a} Hang, F.; Yang, P.:
           \emph{Q-curvature on a class of 3 manifolds.}
           Comm. Pure Appl. Math. \textbf{69} (2016), no. 4, 734--744.

\bibitem{HY14b} -----:
             \emph{Q-curvature on a class of manifolds of dimension at least 5.}
             Comm. Pure Appl. Math. \textbf{69} (2016), no. 8, 1452--1491.

\bibitem{HY14c} -----:
            \emph{Sign of Green's function of Paneitz operators and the Q-curvature.}
             Int. Math. Res. Not. IMRN 2015, no. 19, 9775--9791.

\bibitem{HR} Hebey, E.; Robert, F.:
            \emph{Compactness and global estimates for the geometric Paneitz equation in high dimensions.}
            Electron. Res. Announc. AMS \textbf{10} (2004), 135--141.

\bibitem{HR11} -----:
             \emph{Asymptotic analysis for fourth order Paneitz equations with critical growth.}
             Advances in the Calculus of Variations, \textbf{3} (2011), 229--276.

\bibitem{HuR} Humbert, E.; Raulot, S.:
            \emph{Positive mass theorem for the Paneitz-Branson operator.}
            Calc. Var. Partial Differential Equations \textbf{36} (2009), no. 4, 525--531.

\bibitem{JLX} Jin, T.; Li,Y.Y.; Xiong, J.:
            \emph{On a fractional Nirenberg problem, part I: blow up analysis and compactness of solutions}.
            J. Eur. Math. Soc. (JEMS), \textbf{16} (2014), no. 6, 1111--1171.

\bibitem{JLX2} Jin, T.; Li,Y.Y.; Xiong, J.:
            \emph{On a fractional Nirenberg problem, part II: existence of solutions}.
           IMRN  \textbf{2015} (2015), no. 6, 1555--1589.

\bibitem{JLX3} Jin, T.; Li,Y.Y.; Xiong, J.:
             \emph{The Nirenberg problem and its generalizations: A unified approach.}
             Math. Ann., 369 (2017), no. 1-2, 109--151.



\bibitem{KMS} Khuri, M.A.; Marques, F.C.; Schoen, R.:
             \emph{A compactness theorem for the Yamabe problem.}
             J. Differential Geom. \textbf{81} (1) (2009), 143--196.

\bibitem{KMW} Kim, S.; Musso, M.; Wei, J.:
             \emph{ A non-compactness result on the fractional Yamabe problem in large dimensions.}
             J. Funct. Anal.  \textbf{273} (2017), 3759--3830.

\bibitem{LP} Lee, J.; Parker, T.:
            \emph{The Yamabe problem.} Bull. Amer. Math. Soc. (N.S.) \textbf{17} (1987), 37--91.

\bibitem{Lg} Li, G.:
           \emph{A compactness theorem on Branson's $Q$-curvature equation.} arXiv:1505.07692v1.

\bibitem{LLL}  Li, J; Li, Y.; Liu, P.:
              \emph{The Q-curvature on a 4-dimensional Riemannian manifold $(M,g)$ with $\int_{M}Q\ud V_g=8\pi^2$.}
              Adv. Math. \textbf{231} (2012), no. 3-4, 2194--2223.

\bibitem{Li95} Li, Y.Y.:
              \emph{Prescribing scalar curvature on $\Sn$ and related problems. I.}
               J. Differential Equations \textbf{120} (1995),  319--410.

\bibitem{Li04} ------:
              \emph{Remark on some conformally invariant integral equations: the method of moving spheres}.
              J. Eur. Math. Soc. (JEMS) \textbf{6} (2004), 153--180.

\bibitem{LX} Li,Y.Y.; Xiong, J.:
             \emph{Compactness of conformal metrics with constant $Q$-curvature. I.}
             \href{https://arxiv.org/abs/1506.00739}{arXiv: 1506.00739}.

\bibitem{Li-Zhang04} Li, Y.Y.; Zhang, L.:
              \emph{A Harnack type inequality for the Yamabe equation in low dimensions.}
              Calc. Var. Partial Differential Equations \textbf{20} (2) (2004), 133--151.

\bibitem{Li-Zhang05} -----:
              \emph{Compactness of solutions to the Yamabe problem II.}
              Calc. Var. and PDEs \textbf{25} (2005), 185--237.

\bibitem{Li-Zhang06} -----:
             \emph{Compactness of solutions to the Yamabe problem III.}
              J. Funct. Anal. \textbf{245} (2006), 438--474.

\bibitem{Li-Zhu99} Li, Y.Y.; Zhu, M.:
             \emph{Yamabe type equations on three dimensional Riemannian manifolds.}
             Communications in Contemporary Math. \textbf{1} (1999), 1--50.


\bibitem{Lin} Lin, C.S.:
             \emph{A classification of solutions of a conformally invariant fourth order equation in $\R^n$.}
             Comment. Math. Helv. \textbf{73} (1998), 206--231.

\bibitem{Mal} Malchiodi, A.:
            \emph{Compactness of solutions to some geometric fourth-order equations.}
            J. Reine Angew. Math. \textbf{594} (2006), 137--174.

\bibitem{Marques} Marques, F.C.:
             \emph{A priori estimates for the Yamabe problem in the non-locally conformally flat case.}
             J. Differential Geom. \textbf{71} (2005), 315--346.


\bibitem{Pan83} Paneitz, S.:
             \emph{A quartic conformally covariant differential operator for arbitrary
             pseudo-Riemannian manifolds.}
             SIGMA Symmetry Integrability Geom. Methods Appl. \textbf{4} (2008), no. Paper 036.


\bibitem{QR0} Qing, J.; Raske, D.:
            \emph{Compactness for conformal metrics with constant Q curvature on locally conformally flat manifolds.}
            Calc. Var. Partial Differential Equations \textbf{26} (2006), 343--356.


\bibitem{Schoen89} Schoen,R.:
             ``Variational theory for the total scalar curvature functional for Riemannian metrics and related topics."
             In: Giaquinta, M. (ed.) Topics in Calculus of Variations. Lecture Notes in Mathematics, Vol. 1365 120--154.
             Springer, Berlin Heidelberg New York 1989

\bibitem{Schoen89b} -----:
             Courses at Stanford University,1988,and New York University,1989.

\bibitem{Schoen91} -----:
              ``On the number of constant scalar curvature metrics in a conformal class."
              In: Lawson, H.B., Tenenblat, K. (eds.) Differential geometry:
              a symposium in honor of Manfredo Do Carmo, pp. 311--320. Wiley, New York 1991.



\bibitem{WZ} Wei, J.; Zhao, C.:
             \emph{Non-compactness of the prescribed $Q$-curvature problem
             in large dimensions.} Calc. Var. \textbf{46} (2013), 123--164.

\bibitem{WZhang} Weinstein, G.; Zhang, L.:
            \emph{The profile of bubbling solutions of a class of fourth order geometric equations on $4$-manifolds.}
            J. Funct. Anal. \textbf{257} (2009), no. 12, 3895--3929.

\bibitem{Y} Yamabe, H.:
             \emph{On a deformation of Riemannian structures on compact manifolds.}
             Osaka Math. J. \textbf{12} (1960), 21--37.

\end{thebibliography}
\end{document}